\newif\ifPDF
\newtheorem{thm}{Theorem}[section]
\newtheorem{cor}[thm]{Corollary}
\newtheorem{lem}[thm]{Lemma}
\newtheorem{prop}[thm]{Proposition}
\theoremstyle{definition}
\newtheorem{defn}[thm]{Definition}
\theoremstyle{remark}
\newtheorem{rem}[thm]{Remark}
\newtheorem{example}[thm]{Example}
\numberwithin{equation}{section}
\newcommand{\norm}[1]{\| #1\|}
\newcommand{\abs}[1]{| #1 |}
\newcommand{\Real}{\mathbb R}
\newcommand{\Int}{\mathbb Z}
\newcommand{\Comp}{\mathbb C}
\newcommand{\eps}{\varepsilon}
\newcommand{\Kzero}{\textrm{K}_0}
\newcommand{\Kone}{\textrm{K}_1}
\newcommand{\tr}{\mathrm{T}}
\begin{document}


\title{On the small boundary property and $\mathcal Z$-absorption}

\author{George A. Elliott}
\address{Department of Mathematics, University of Toronto, Toronto, Ontario, Canada~\ M5S 2E4}
\email{elliott@math.toronto.edu}

\author{Zhuang Niu}
\address{Department of Mathematics and Statistics, University of Wyoming, Laramie, Wyoming 82071, USA}
\email{zniu@uwyo.edu}

\keywords{$\mathcal Z$-Absorbing C*-Algebras, Small Boundary Property, Mean Topological Dimension}
\date{\today}
\subjclass{46L35, 37B02}


\begin{abstract}
We introduce the Property (C) for a unital commutative sub-C*-algebra $D$ of a unital C*-algebra $A$, a version of the relative comparison property using almost normalizers. Under the assumption of this property, the $\mathcal Z$-absorption of $A$ is shown to imply the small boundary property of $(D, \mathrm{T}(A)|_D)$, where $A =\mathrm{C}(X) \rtimes \Int^d $ and $D = \mathrm{C}(X)$. 

\end{abstract}

\maketitle

\setcounter{tocdepth}{1}


\section{Introduction}

This is a continuation of our study \cite{EN-SBP-I} of the relation between the small boundary property of dynamical systems and the $\mathcal Z$-absorption of C*-algebras.

The Jiang-Su algebra $\mathcal Z$ is an infinite-dimensional unital simple separable amenable C*-algebra which has the same value of the Elliott invariant as $\Comp$. A C*-algebra $A$ is said to be $\mathcal Z$-absorbing if $A \cong A \otimes \mathcal Z$, and the class of $\mathcal Z$-absorbing C*-algebras (which includes $\mathcal Z$ itself) is considered to be well behaved. In fact, the class of $\mathcal Z$-absorbing unital simple separable amenable C*-algebras which satisfy the Universal Coefficient Theorem of KK-theory (possibly redundant) is  classified by the conventional Elliott invariant (see \cite{GLN-TAS-1}, \cite{GLN-TAS-2}, \cite{EN-K0-Z}, \cite{EGLN-DR}, \cite{EGLN-ASH}, \cite{TWW-QD}, \cite{CETWW-dim-n}).

On the other hand, the small boundary property of a topological dynamical system was introduced in \cite{Lindenstrauss-Weiss-MD} as a dynamical system analogue of the usual definition of zero-dimensional space. It is closely related to the mean (topological) dimension, which was introduced by Gromov (\cite{Gromov-MD}), and then was developed and studied systematically by Lindenstrauss and Weiss (\cite{Lindenstrauss-Weiss-MD}), as a numerical invariant which measures the complexity of a dynamical system in terms of the dimension growth with respect to partial orbits. The small boundary property always implies the zero mean dimension (\cite{Lindenstrauss-Weiss-MD}), and the converse holds for $\mathbb Z^d$-actions with the marker property (\cite{Lind-MD}, \cite{GLT-Zk}).

It was shown in \cite{EN-MD0} that the small boundary property of $(X, \Int)$ implies $\mathcal Z$-absorption of the crossed product C*-algebra $ A = \mathrm{C}(X) \rtimes \Int$. In this paper, we investigate whether $\mathcal Z$-absorption of $A$ implies the small boundary property of $(X, \Int)$.

We consider the following property for a pair of C*-algebras $D \subseteq A$:  
\newtheorem*{defn-n}{Definition}
\begin{defn-n}[Definition \ref{defn-C}]
Let $A$ be a unital C*-algebra and let $D$ be a unital commutative sub-C*-algebra. Then the pair $(D, A)$ is said to have Property (C) if for any positive contractions $f, g, h \in D$ satisfying $f, g \in \overline{hDh}$, and $$\mathrm{d}_\tau(f) < \mathrm{d}_\tau(g),\quad \tau \in \tr(A),$$ and for any $\eps>0$, there is a contraction $u \in \overline{hAh} + \Comp 1_A$ such that
$$ u f u^* \in_\eps^{\norm{\cdot}_2} \overline{gAg},$$
$$\mathrm{dist}_{2, \tr(A)}(udu^*, (D)_1) < \eps,\quad \mathrm{dist}_{2, \tr(A)}(u^*du, (D)_1) < \eps, \quad d \in (D)_1, $$ 
and
$$\norm{uu^* - 1}_{2, \tr(A)},\  \norm{u^*u - 1}_{2, \tr(A)} < \eps.$$
\end{defn-n}

Property (C) can be regarded as a relative comparison property for $D$ inside $A$, with respect to the uniform trace norm, but requires the comparison being implemented by almost normalizers. 

Under the assumption of Property (C), $\mathcal Z$-absorption of $A$ implies that $(X, \Int^d)$ has the small boundary property:
\begin{thm}[Corollary \ref{Z-SBP}]
Let $(X, \Int^d)$ be a free and minimal topological dynamical system, and let $A = \mathrm{C}(X)\rtimes \Int^d$. If $(\mathrm{C}(X), A)$ has Property (C), then $$A \cong A \otimes \mathcal Z \quad \Longrightarrow \quad \textrm{$(X, \Int^d)$ has the (SBP)}. $$
\end{thm}

This theorem also holds for a free and minimal action of an arbitrary  amenable group with the (URP) and (COS) (Definition \ref{defn-URP-COS}) (properties which may always hold).

In order to show the (SBP) for the pair $(D, \tr(A))$, by \cite{EN-SBP-I}, it is enough to show that every self-adjoint element of $D$ can be approximated by self-adjoint elements of $D$ with a neighbourhood of $0$ uniformly small under all tracial spectral measures (see Theorem \ref{SBP-2-norm}). By Property (S), such approximating elements exist, but only in the ambient C*-algebra $A$. However, this can be fixed by using Properties (C): Upon using  Property (E) (Definition \ref{defn-D}), an existence property which always holds for the C*-algebras in question, one obtains a self-adjoint element in the subalgebra $D$ which almost has the same trace spectral measure distributions as the self-adjoint element in $A$ provided by Property (S), and then upon using Property (C), this element can be twisted inside $D$ to approximate the given self-adjoint element and still have a neighbourhood of $0$ uniformly small under all tracial spectral measures. This shows the (SBP) for $(D, \tr(A))$.

\subsection*{Acknowledgements} The research of the first named author was supported by a Natural Sciences and Engineering Research Council of Canada (NSERC) Discovery Grant, and the research of the second named author was supported by a Simons Foundation grant  (MP-TSM-00002606). Part of the work was carried out during the visit of the second named author to Kyushu University in December 2024; he  thanks Yasuhiko Sato for the discussions and the hospitality during the visit.

\section{Preliminaries and notation}

In this section, let us collect some notation and definitions concerning C*-algebras and dynamical systems.

\subsection{Comparison of positive elements and $\mathcal Z$-absorbing C*-algebras}

Let $A$ be a C*-algebra, and let $a, b \in A$ be positive elements. One says that $a$ is Cuntz subequivalent to $b$, denoted by $a \precsim b$, if there is a sequence $(x_n)$ in $A$ such that
$$ \lim_{n \to \infty} x_n^*b x_n = a.$$
The following lemma will be frequently used.
\begin{lem}[\cite{RorUHF2}]
If $a, b\in A$ are positive elements such that $\norm{a - b} < \eps$, then $(a-\eps)_+ \precsim b$, where $(a-\eps)_+ = f(a)$ with $f(t) = \max\{t-\eps, 0\}$, $t \in \Real$.
\end{lem}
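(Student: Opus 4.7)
The plan is a direct functional calculus argument that reduces the claim to a chain of three Cuntz comparisons. The strict hypothesis $\|a - b\| < \varepsilon$ is essential and lets us pick $\delta > 0$ with $\|a - b\| \le \varepsilon - \delta$. Since $a, b$ are self-adjoint, this gives the operator inequality $b \ge a - (\varepsilon - \delta)\cdot 1$ in the unitization of $A$; the margin $\delta$ is what the rest of the argument will consume.

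Introduce a continuous cut-off $h : [0, \infty) \to [0, 1]$ with $h \equiv 1$ on $[\varepsilon, \infty)$ and $h \equiv 0$ on $[0, \varepsilon - \delta/2]$. The three comparisons are:
\[
(a-\varepsilon)_+ \;\precsim\; h(a)^2 \;\precsim\; h(a)\, b\, h(a) \;\precsim\; b.
\]
The first is immediate: since $h = 1$ on the spectral support of $(a-\varepsilon)_+$, one has $(a-\varepsilon)_+ \le \|a\|\, h(a)^2$. For the middle one, combine the pointwise inequality $t\, h(t)^2 \ge (\varepsilon - \delta/2)\, h(t)^2$ with the operator inequality $b \ge a - (\varepsilon - \delta)$: compressing by $h(a)$ on both sides,
\[
h(a)\, b\, h(a) \;\ge\; a\, h(a)^2 - (\varepsilon - \delta)\, h(a)^2 \;\ge\; \tfrac{\delta}{2}\, h(a)^2.
\]
This domination gives $h(a)^2 \precsim h(a) b h(a)$, witnessed by $x_n := (h(a) b h(a) + 1/n)^{-1/2} h(a)$: a short computation shows $x_n^*\, (h(a) b h(a))\, x_n \to h(a)^2$. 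The third comparison is the classical fact that $h(a) b h(a) = (b^{1/2} h(a))^*(b^{1/2} h(a))$ is Cuntz equivalent to $(b^{1/2} h(a))(b^{1/2} h(a))^* = b^{1/2} h(a)^2 b^{1/2} \le b$.

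The main obstacle is producing the strict lower bound $h(a) b h(a) \ge (\delta/2)\, h(a)^2$, since this is what makes the middle Cuntz comparison go through. It depends critically on the strictness of the hypothesis (to extract a positive margin $\delta$) together with the careful choice of $h$ that overlaps into the subcritical region $[\varepsilon - \delta/2, \varepsilon)$ rather than vanishing on all of $[0, \varepsilon)$. Without this overlap, the cross-term estimates only yield the weaker conclusion $(a - 2\varepsilon)_+ \precsim b$.
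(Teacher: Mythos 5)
Your argument is correct: the chain $(a-\eps)_+ \le \|a\|\,h(a)^2$, then $h(a)bh(a) \ge (\delta/2)h(a)^2$ giving $h(a)^2 \precsim h(a)bh(a)$ (your witness $x_n$ works since $h(a)^2 \le (2/\delta)\,h(a)bh(a)$ makes the error term $h(a)\tfrac{1/n}{h(a)bh(a)+1/n}h(a)$ go to $0$), and finally $h(a)bh(a) \sim b^{1/2}h(a)^2b^{1/2} \le b$, assembles into $(a-\eps)_+ \precsim b$ by transitivity. The paper gives no proof and simply cites R{\o}rdam, and your argument is essentially the standard one behind that citation (cut off $a$ just below $\eps$, use $b \ge a-\eps$, and compress), so there is nothing further to compare.
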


Let $\tau \in \tr(A)$. For each positive element $a \in A$, define
$$\mathrm{d}_\tau(a) = \lim_{n\to\infty} \tau(a^{\frac{1}{n}}).$$
Then, if $a \precsim b$, one has $$\mathrm{d}_\tau(a) \leq \mathrm{d}_\tau(b),\quad \tau \in \tr(A).$$ The converse in general does not hold. 

\begin{defn}[\cite{JS-Z}]
The Jiang-Su algebra $\mathcal Z$ is the (unique) simple unital inductive limit of dimension drop C*-algebras such that $$(\Kzero(\mathcal Z), \Kzero^+(\mathcal Z), [1_\mathcal Z]_0) \cong (\Int, \Int^+, 1),\quad  \Kone(\mathcal Z) = \{0\},\quad \mathrm{and} \quad \tr(\mathcal Z) = \{\mathrm{pt}\}.$$ 

A C*-algebra $A$ is said to be $\mathcal Z$-absorbing if $A \cong A \otimes \mathcal Z$.
\end{defn}

If $A$ is simple and $\mathcal Z$-absorbing, then the strict order induced by the Cuntz subequivalence relation is determined by the rank functions; that is, for any positive elements $a, b \in A$,
$$ \mathrm{d}_\tau(a) < \mathrm{d}_\tau(b),\quad \tau \in \tr(A) \quad \Longrightarrow \quad  a \precsim b.  $$
The Toms-Winter conjecture asserts that strict comparison implies $\mathcal Z$-absorption for simple separable amenable C*-algebras (this was verified for C*-algebras with finitely many extreme traces in \cite{Matui-Sato-CP}, and then it was generalized to C*-algebras with extreme traces being compact and finite dimensional; see \cite{Sato-CP}, \cite{KR-CenSeq}, \cite{TWW-Z}).

The class of simple separable amenable $\mathcal Z$-absorbing C*-algebras which satisfy the Universal Coefficient Theorem can be classified by the conventional Elliott invariant, which in the unital case consists of the K-groups and the pairing with the trace simplex (the order on the K-group, not redundant in more general cases, is determined by the pairing) (see \cite{GLN-TAS-1}, \cite{GLN-TAS-2}, \cite{EN-K0-Z}, \cite{EGLN-DR}, \cite{EGLN-ASH}, \cite{TWW-QD}, \cite{CETWW-dim-n}):
\begin{thm}\label{classification-Z}
Let $A, B$ be unital simple separable amenable $\mathcal Z$-absorbing C*-algebras which satisfy the UCT. Then $$ A \cong B \quad \Longleftrightarrow \quad \mathrm{Ell}(A) \cong \mathrm{Ell}(B), $$ where $\mathrm{Ell}(\cdot)$ denotes the Elliott invariant. Moreover, any isomorphism between the Elliott invariant can be lifted to an isomorphism between the C*-algebras.
\end{thm}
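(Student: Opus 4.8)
The plan is to run the Elliott intertwining machine: rather than comparing $A$ and $B$ directly, one establishes an \emph{existence} theorem and a \emph{uniqueness} theorem at the level of $*$-homomorphisms and then feeds them into a two-sided approximate intertwining argument. Given an isomorphism $\Lambda \colon \mathrm{Ell}(A) \to \mathrm{Ell}(B)$, one wants: (existence) for every finite subset and tolerance, $*$-homomorphisms $A \to B$ and $B \to A$ that approximately realize $\Lambda$ and $\Lambda^{-1}$ on $K$-theory, traces, and the pairing between them; and (uniqueness) that any two such maps agreeing on invariants up to the relevant tolerance are approximately unitarily equivalent. Granting both, Elliott's intertwining argument produces mutually inverse isomorphisms $A \cong B$ inducing $\Lambda$, which simultaneously yields the lifting assertion in the ``moreover'' clause.

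To make the existence and uniqueness theorems tractable one first needs a workable structural model for the class, and here the regularity input is decisive. By \cite{CETWW-dim-n}, every unital simple separable nuclear $\mathcal Z$-absorbing C*-algebra has nuclear dimension at most one; in the purely infinite case the classification is then the Kirchberg--Phillips theorem, so one may assume $A$ and $B$ stably finite. One next invokes the quasidiagonality theorem of \cite{TWW-QD}: every trace on a separable nuclear C*-algebra satisfying the UCT is quasidiagonal. Combined with the finite-nuclear-dimension structure, this is precisely the hypothesis needed for the reduction theorems of Elliott--Gong--Lin--Niu \cite{EGLN-DR}, \cite{EGLN-ASH}: after tensoring with a UHF algebra of infinite type, $A$ and $B$ are tracially approximated by subhomogeneous C*-algebras of topological dimension at most one, i.e.\ have rational generalized tracial rank at most one.

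For the model class of algebras of rational generalized tracial rank at most one, classification up to isomorphism by the Elliott invariant is the theorem of Gong--Lin--Niu \cite{GLN-TAS-1}, \cite{GLN-TAS-2}. There one has explicit existence and uniqueness theorems for $*$-homomorphisms between such algebras, proved first at the level of the UHF-stabilizations where the tracial-approximation structure is directly available, and then transported back to the non-stabilized algebras using $\mathcal Z$-stability together with Elliott-invariant bookkeeping; the range of the invariant is computed by realizing every admissible Elliott invariant as an inductive limit of Elliott--Thomsen (point--line) algebras. The point that the order on $\Kzero$ is not automatically recovered from the pairing with traces in full generality, and must be tracked separately, is handled in \cite{EN-K0-Z}. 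Putting the pieces together: reduce the given $A$ and $B$ to the model class via the regularity and quasidiagonality results, apply the Gong--Lin--Niu classification there, and descend.

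The principal obstacles lie in the cited literature rather than in any short argument. First, the quasidiagonality theorem of \cite{TWW-QD} is the linchpin, and its proof is itself a major undertaking, using a two-coloured refinement of the classification of $*$-homomorphisms together with Voiculescu-type absorption. Second --- and technically the most delicate piece of the intertwining --- is the uniqueness theorem: showing that two full $*$-homomorphisms with the same total $K$-theoretic data and the same trace-compatible behaviour are approximately unitarily equivalent requires killing a rotation-type obstruction by Basic Homotopy Lemma arguments, and this is exactly where finite nuclear dimension and quasidiagonality of the traces are consumed. By comparison, the intertwining bookkeeping, the range-of-invariant computation, and the passage between $A$ and its UHF-stabilization are routine. (That strict comparison suffices in place of $\mathcal Z$-absorption for the subclasses with compact, finite-dimensional extreme trace boundary is the Toms--Winter result of \cite{Matui-Sato-CP}, \cite{Sato-CP}, \cite{KR-CenSeq}, \cite{TWW-Z}, and could be used to widen the input hypotheses.)
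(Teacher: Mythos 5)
The paper does not prove this theorem; it is recalled as known background, with the relevant literature cited in the sentence immediately preceding the statement. Your sketch is a correct and accurate account of how that classification is established in the cited references (finite nuclear dimension via \cite{CETWW-dim-n}, quasidiagonality of traces via \cite{TWW-QD}, reduction to rational generalized tracial rank at most one via \cite{EGLN-DR} and \cite{EGLN-ASH}, classification of the model class via \cite{GLN-TAS-1} and \cite{GLN-TAS-2}, order-on-$\Kzero$ bookkeeping via \cite{EN-K0-Z}, and the Elliott intertwining argument to assemble existence and uniqueness into an isomorphism lifting a given isomorphism of invariants), so it matches the approach the paper is implicitly invoking.
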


\subsection{Uniform trace norm}

\begin{defn}
Let $A$ be a unital C*-algebra, and let $\tau \in \tr(A)$. Define
$$  \norm{a}_{2, \tau} = (\tau(a^*a))^{\frac{1}{2}}, \quad a \in A.$$
For any set $\Delta \subseteq \tr(A)$,  define the uniform trace norm
$$ \norm{a}_{2, \Delta} = \sup\{ \norm{a}_{2, \tau}: \tau \in \Delta\},\quad a \in A.$$
\end{defn}
The uniform trace norm satisfies
$$ \norm{ab}_{2, \Delta} \leq \min\{ \norm{a} \norm{b}_{2, \Delta}, \norm{a}_{2, \Delta} \norm{b}\} \quad \mathrm{and}\quad \abs{\tau(a)} \leq \norm{a}_{2, \Delta},\quad  a, b \in A,\ \tau \in \Delta.$$ 

We shall use $l^\infty(A)$ to denote the C*-algebra of bounded sequences of $A$, i.e., $$l^\infty(A) = \{(a_n): a_n \in A,\ \sup\{\norm{a_n}: n=1, 2, ...\} < +\infty \}.$$
Let $\omega$ be a free ultrafilter; then the trace-kernel is the ideal
$$J_{2, \omega, \tr(A)}:=\{(a_n) \in l^\infty(A): \lim_{n \to\omega} \norm{a_n}_{2, \tr(A)} = 0\}.$$

\begin{defn}[Definition 2.1 of \cite{CETW-Gamma}]\label{UGamma-defn}
A unital C*-algebra $A$ is said to have uniform property $\Gamma$ if for each $n\in \mathbb N$, there is a partition of unity $$p_1, p_2, ..., p_n \in (l^\infty(A)/J_{2, \omega, \Delta}) \cap A'$$ such that
$$\tau(p_iap_i) = \frac{1}{n}\tau(a),\quad a\in A,\ \tau\in \tr(A)_\omega,$$
where $\tr(A)_\omega$ denotes the set of limit traces of $l^\infty(A)$, i.e., the traces of the form $$\tau((a_i)) = \lim_{i \to \omega} \tau_i(a_i),\quad \tau_i \in \tr(A),$$
and $a$ is regarded as the constant sequence $(a) \in l^\infty(A)$.

All $\mathcal Z$-absorbing C*-algebras have uniform property $\Gamma$ (see Theorem 5.6 of \cite{CETW-Gamma}). (Indeed, all unital simple  amenable C*-algebras with unique trace have (uniform) property $\Gamma$, and if a C*-algebra $U$ has uniform property $\Gamma$, then the tensor product C*-algebra $A \otimes U$ has uniform property $\Gamma$ (an extreme trace on a tensor product is a product trace).)
\end{defn}

\subsection{AH algebras with diagonal maps}

\begin{defn}
An AH algebra with diagonal maps is the limit of an inductive sequence
$$\xymatrix{
A_1 \ar[r] & A_2 \ar[r] & \cdots \ar[r] & A = \varinjlim A_n,
}$$
where $A_i = \bigoplus_{j} \mathrm{M}_{n_{i, j}}(\mathrm{C}(X_{i, j}))$, and each connecting map preserves the diagonal subalgebras, i.e., it has the form $$f \mapsto \mathrm{diag}\{f\circ\lambda_1, ..., f\circ\lambda_m\},$$ where the $\lambda$s are continuous maps between the $X$s.
\end{defn}

All simple unital AH algebras with diagonal maps have stable rank one (\cite{EHT-sr1}), but not all AH algebras with diagonal maps are $\mathcal Z$-absorbing. In the pioneering work \cite{Vill-perf},  Villadsen constructed simple AH algebras with diagonal maps which have perforation in the ordered $\Kzero$-group. The construction was then used in \cite{Toms-Ann} to obtain a simple AH algebra with diagonal maps which has  the same value of the conventional Elliott invariant as an AI algebra, but is not isomorphic to this AI algebra. Although Villadsen algebras are not $\mathcal Z$-absorbing, a preliminary classification is obtained in \cite{ELN-Vill}.

\subsection{The small boundary property and the mean dimension}

\begin{defn}
A topological dynamical system $(X, \Gamma)$ is free if $x \gamma = x$ implies $\gamma = e$ where $x\in X$ and $\gamma \in \Gamma$. It is said to be minimal if the only closed invariant subspaces of $X$ are $\O$ and $X$. 

A topological dynamical system induces an action of $\Gamma$ on $\mathrm{C}(X)$ by $$\gamma(f)(x) = f(x\gamma),\quad x \in X.$$ We shall assume $\Gamma$ is discrete. The (universal) crossed product C*-algebra $\mathrm{C}(X) \rtimes \Gamma$ is the universal C*-algebra generated by $\mathrm{C}(X)$ and unitaries $u_\gamma$, $\gamma \in \Gamma$, with respect to the relations
$$ u^*_{\gamma} f u_{\gamma} = \gamma(f) \quad \mathrm{and} \quad u_{\gamma_1} u^*_{\gamma_2} = u_{\gamma_1 \gamma^{-1}_2},\quad f\in \mathrm{C}(X),\ \gamma, \gamma_1, \gamma_2 \in \Gamma.$$
\end{defn}

If $\Gamma$ is amenable and the dynamical system $(X, \Gamma)$ is free and minimal, the C*-algebra $\mathrm{C}(X) \rtimes \Gamma$ is simple, unital, amenable, stably finite, and satisfies the UCT. However, the C*-algebra $\mathrm{C}(X) \rtimes \Gamma$ may fail to be $\mathcal Z$-absorbing, even for $\Gamma =\Int$ (\cite{GK-Dyn}).

Let us consider the following property of dynamical systems.
\begin{defn}
A topological dynamical system $(X, \Gamma)$ is said to have the small boundary property (SBP) if for any $x \in X$ and any open neighbourhood $U$ of $x$, there is a neighbourhood $V$ of $x$ such that $V \subseteq U$ and $\mu(\partial V) = 0$ for all invariant measures $\mu$ (\cite{Lindenstrauss-Weiss-MD}).

More generally, consider a metrizable compact space $X$ and a collection $\Delta$ of Borel probability measures on $X$. The pair $(X, \Delta)$ is said to have the (SBP) if for any $x \in X$ and any open neighbourhood $U$ of $x$, there is a neighbourhood $V$ of $x$ such that $V \subseteq U$ and $\mu(\partial V) = 0$ for all $\mu \in \Delta$.  (\cite{EN-SBP-I})
\end{defn}

We have the following criterion for the (SBP):
\begin{thm}[Theorem 2.9 of \cite{EN-SBP-I}]\label{SBP-2-norm}
Let $X$ be a metrizable compact space, and let $\Delta$ be a compact set of Borel probability measures on $X$. Then the pair $(X, \Delta)$ has the (SBP) if, and only if, for any continuous real-valued function $f: X \to \Real$ and any $\eps>0$, there is a continuous real-valued function $g: X\to \Real$ such that
\begin{enumerate}
\item $\norm{f - g}_{2, \Delta} < \eps$, and 
\item there is $\delta>0$ such that $\tau_\mu(\chi_{\delta}(g)) < \eps$, $\mu \in\Delta$,
where $\tau_\mu$ is the tracial state of $\mathrm{C}(X)$ induced by $\mu$, and
\begin{equation*}
\chi_\delta(t) = \left\{ \begin{array}{ll}
1, & \abs{t} < \delta, \\
2-\abs{t}/\delta, & \delta\leq \abs{t} < 2\delta, \\
0, & \mathrm{otherwise}.
\end{array}\right.
\end{equation*}
\end{enumerate} 
\end{thm}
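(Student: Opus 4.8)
\textit{Proof sketch.} The plan is to prove the two implications separately and by rather different methods. For $(\Rightarrow)$ the idea is to convert the (SBP) --- a statement about small-boundary neighbourhoods --- into a finite partition of $X$ into pieces of small diameter whose overlaps are $\Delta$-null, and then to build $g$ from $f$ by making it locally constant off a neighbourhood (of uniformly small $\Delta$-measure) of those overlaps, the constants being nudged away from $0$. For $(\Leftarrow)$ the idea is first to upgrade the hypothesis from its $\norm{\cdot}_{2,\Delta}$-form to a $\norm{\cdot}_\infty$-form, and then to run a geometrically controlled iteration producing a continuous function with a level set of $\Delta$-measure \emph{exactly} zero; a sublevel set of this function is the required neighbourhood.

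In more detail, for $(\Rightarrow)$: fix $\eta>0$ with $\abs{f(x)-f(x')}<\eps/4$ whenever $d(x,x')<\eta$. Applying the (SBP) at each point with $U$ a small ball and using compactness of $X$, obtain finitely many open $V_1,\dots,V_m$ of diameter $<\eta$ covering $X$ with $\mu(\partial V_j)=0$ for all $\mu\in\Delta$. Then $E_1=\overline{V_1}$, $E_j=\overline{V_j}\setminus(V_1\cup\dots\cup V_{j-1})$ are closed, cover $X$, have diameter $<\eta$, and meet pairwise only inside the closed $\Delta$-null set $N:=\bigcup_j\partial V_j$. Using outer regularity of each $\mu$ together with the weak$^*$-compactness of $\Delta$ --- replace a shrinking neighbourhood of $N$ by a Urysohn function, pass to a finite subcover, and apply Chebyshev --- choose an open $O\supseteq N$ with $\sup_{\mu\in\Delta}\mu(O)$ as small as we wish. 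The sets $K_j:=E_j\setminus O$ are then pairwise disjoint closed sets covering $X\setminus O$; pick $x_j\in E_j$, choose $c_j$ with $\abs{c_j-f(x_j)}=\eps/8$ and $\abs{c_j}\ge\eps/8$, put $g=c_j$ on $K_j$, and extend continuously with $\norm{g}\le\norm{f}+\eps/8$ (Tietze). Then $\abs{f-g}<3\eps/8$ on $X\setminus O$, so $\norm{f-g}_{2,\Delta}^2<(3\eps/8)^2+(2\norm{f}+\eps/8)^2\sup_\mu\mu(O)<\eps^2$ for $\sup_\mu\mu(O)$ small; and $\abs{g}\ge\eps/8$ off $O$, so with $\delta=\eps/16$ we get $\chi_\delta(g)\le\mathbf{1}_O$, hence $\tau_\mu(\chi_\delta(g))\le\mu(O)<\eps$.

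For $(\Leftarrow)$: first the sup-norm form. Given $f,\eps$, apply the hypothesis with a small parameter $\eps'$ to get $g$ with $\norm{f-g}_{2,\Delta}<\eps'$ and $\tau_\mu(\chi_\delta(g))<\eps'$ for some $\delta>0$; put $h=\max(f-\eps,\min(g,f+\eps))$. Then $\norm{f-h}_\infty\le\eps$, and since $\{h\ne g\}=\{\abs{f-g}>\eps\}$ has $\mu$-measure $\le(\eps'/\eps)^2$ by Chebyshev, $\tau_\mu(\chi_\delta(h))\le\tau_\mu(\chi_\delta(g))+\mu(\{h\ne g\})<\eps$ once $\eps'$ is small. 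Now fix $x_0\in X$ and an open $U\ni x_0$ (we may assume $U\ne X$), and choose $f_0\in C(X,[-1,1])$ with $f_0(x_0)=-1$ and $f_0\equiv1$ on $X\setminus U$. Iterating the sup-norm form starting from $f_0$ produces continuous $g_1,g_2,\dots$, numbers $\delta_n\in(0,1]$ non-increasing and $\eps_n\downarrow0$, with $\norm{g_1-f_0}_\infty<1/8$, $\norm{g_{n+1}-g_n}_\infty<2^{-n-2}\delta_n$, and $\mu(\{\abs{g_n}<\delta_n\})<\eps_n$ for all $\mu\in\Delta$ (at each stage shrink the new $\delta$ below $\delta_n$, using that $\rho\mapsto\tau_\mu(\chi_\rho(\cdot))$ is monotone). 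Then $g:=\lim_n g_n$ exists uniformly, with $\norm{g-f_0}_\infty<1/2$ and $\norm{g-g_n}_\infty<\delta_n$ for every $n$; hence $\{g=0\}\subseteq\{\abs{g_n}<\delta_n\}$, so $\mu(\{g=0\})\le\eps_n\to0$, i.e. $\{g=0\}$ is $\Delta$-null. Finally $g(x_0)<0$ and $g>0$ on $X\setminus U$, so $V:=\{g<0\}$ is an open neighbourhood of $x_0$ with $V\subseteq U$ and $\partial V\subseteq\{g=0\}$, giving $\mu(\partial V)=0$ for all $\mu\in\Delta$; this is the (SBP).

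The main obstacle is the converse direction: passing from ``a neighbourhood of a level set has arbitrarily small $\Delta$-measure'' to ``a level set has $\Delta$-measure exactly zero''. This is what forces the two less obvious moves above --- the $\norm{\cdot}_{2,\Delta}$-to-$\norm{\cdot}_\infty$ upgrade, without which the iteration loses the pointwise control needed to keep the values of $f_0$ at $x_0$ and on $X\setminus U$ pinned (hence to keep $x_0\in V\subseteq U$), and the geometric bookkeeping that keeps $\norm{g-g_n}_\infty<\delta_n$ at every stage, which is precisely what traps $\{g=0\}$ in each bad set $\{\abs{g_n}<\delta_n\}$. A recurring secondary technicality, handled in $(\Rightarrow)$ by the Urysohn-plus-weak$^*$-compactness argument, is making all the smallness estimates uniform over the compact set $\Delta$ rather than one measure at a time.
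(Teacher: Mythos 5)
This theorem is quoted in the present paper from the companion work \cite{EN-SBP-I} (where it is Theorem~2.9), and no proof appears in the source you were given, so a line-by-line comparison with ``the paper's own proof'' is not possible here. Evaluated on its own terms, your argument is correct in both directions.

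For $(\Rightarrow)$: the null-boundary open cover obtained from (SBP) plus compactness, the closed decomposition $E_j=\overline{V_j}\setminus(V_1\cup\cdots\cup V_{j-1})$ with overlaps trapped in $N=\bigcup_j\partial V_j$, the Urysohn/Dini (or Chebyshev) step to make $\sup_{\mu\in\Delta}\mu(O)$ small \emph{uniformly} over the compact $\Delta$, and the Tietze extension of the piecewise-constant function with values nudged to $|c_j|\ge\eps/8$ are all sound. Note, as you implicitly do, that the $V$ in the definition of (SBP) may be taken open (pass to $\mathring V$, whose boundary is contained in $\partial V$), which is what your finite-subcover step needs.

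For $(\Leftarrow)$: the two nontrivial moves---clipping $g$ to $h=\max(f-\eps,\min(g,f+\eps))$, using Chebyshev on $\{h\neq g\}=\{|f-g|>\eps\}$ to upgrade the $\norm{\cdot}_{2,\Delta}$-hypothesis to a $\norm{\cdot}_\infty$-hypothesis, and then the telescoping iteration with the invariant $\norm{g-g_n}_\infty<\delta_n$ (which uses that $\rho\mapsto\chi_\rho(t)$ is pointwise non-decreasing, so $\delta$'s can be shrunk)---both work. Two small points worth making explicit in a written-up version: (i) the monotonicity $\chi_{\rho'}\le\chi_\rho$ for $\rho'\le\rho$ should be stated, since it is what lets you shrink $\delta_{n+1}\le\delta_n$ without losing the estimate $\mu(\{|g_{n+1}|<\delta_{n+1}\})<\eps_{n+1}$; (ii) the trap $\{g=0\}\subseteq\{|g_n|<\delta_n\}$ needs the \emph{uniform} estimate $\norm{g-g_n}_\infty<\delta_n$ at every $n$, which is precisely what the weights $2^{-n-2}\delta_n$ (together with $\delta_n$ non-increasing) guarantee, since $\sum_{m\ge n}2^{-m-2}\delta_m\le\delta_n\cdot 2^{-n-1}<\delta_n$. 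The final identification $\partial\{g<0\}\subseteq\{g=0\}$, hence $\Delta$-null, and $x_0\in\{g<0\}\subseteq U$ by the pinned boundary values of $f_0$, completes (SBP). This is a clean, self-contained proof.
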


Mean topological dimension was introduced by Gromov (\cite{Gromov-MD}), and then was developed and studied systematically by Lindenstrauss and Weiss (\cite{Lindenstrauss-Weiss-MD}):
\begin{defn}
Consider a topological dynamical system $(X, \Gamma)$, where $\Gamma$ is discrete and amenable. Its mean dimension is defined as
$$\mathrm{mdim}(X, \Gamma):= \sup_{\mathcal U} \lim_{n \to\infty} \frac{1}{\abs{\Gamma_n}}\mathcal D( \bigwedge_{\gamma \in \Gamma_n} \mathcal U \gamma^{-1} ),$$
where $\Gamma_1, \Gamma_2, ...$ is a F{\o}lner sequence of $\Gamma$, the supremum is taken over all finite open covers $\mathcal U$ of $X$, and $\mathcal D(\mathcal U) = \min\{\mathrm{ord}(\mathcal V): \mathcal V \prec \mathcal U\}$ ($\mathrm{ord}$ is the maximal number of the mutually overlapping sets minus $1$).
\end{defn}

By \cite{Lindenstrauss-Weiss-MD}, the small boundary property of $(X, \Gamma)$ implies zero mean dimension. The converse was shown in \cite{MR3614036} and \cite{GLT-Zk} for $\Gamma = \Int^d$, and in \cite{Niu-MD-Zd} for actions with the (URP). 

Zero mean dimension (or small boundary property) implies the $\mathcal Z$-absorption of the C*-algebra:
\begin{thm}[\cite{EN-MD0}]\label{old-thm}
Let $(X, \Int)$ be a free and minimal dynamical system. If $\mathrm{mdim}(X, \Int) = 0$, then the C*-algebra $\mathrm{C}(X) \rtimes \Int$ is $\mathcal Z$-absorbing.
\end{thm}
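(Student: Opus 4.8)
The plan is to derive $\mathcal Z$-absorption of $A=\mathrm{C}(X)\rtimes\Int$ from three inputs: the equivalence, for minimal $\Int$-systems, of zero mean dimension with the small boundary property; Phillips's theory of large (orbit-breaking) subalgebras; and a Toms--Winter-type passage from strict comparison together with a central-sequence regularity to $\mathcal Z$-stability. First I would reduce to the (SBP): by Lindenstrauss--Weiss the (SBP) implies $\mathrm{mdim}=0$, and by the converse direction cited above for $\Int^d$-actions (in particular $\Int$), $\mathrm{mdim}(X,\Int)=0$ implies the (SBP), so we may assume $(X,\Int)$ has the small boundary property, with $T$ a generating homeomorphism. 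For a nonempty closed set $Y\subseteq X$ meeting each $T$-orbit at most once I would form the orbit-breaking (Putnam) subalgebra $A_Y=\mathrm{C}^*(\mathrm{C}(X),\, uf:f\in\mathrm{C}_0(X\setminus Y))\subseteq A$. Since $(X,T)$ is minimal, $A_Y$ is unital and simple, and by Phillips's criterion it is a \emph{(centrally) large} subalgebra of $A$; moreover the first-return decomposition of $X$ relative to $Y$ exhibits $A_Y$ as an inductive limit of recursive subhomogeneous algebras whose base spaces are built from copies of closed subsets of $Y$ and their iterated intersections under $T$.

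Next I would exploit the (SBP) to make the building blocks ``thin.'' Since every invariant measure gives $\partial Y$ zero mass, an iterated marker/tower construction produces a sequence of such thin sets $Y_1,Y_2,\dots$ for which the recursive subhomogeneous blocks of $A_{Y_n}$ have ratio $\dim(\text{base space})/(\text{matrix size})\to 0$ as $n\to\infty$; this is exactly where $\mathrm{mdim}=0$ is consumed, since zero mean dimension forces the covering dimension of the union of $N$ consecutive translates to grow like $o(N)$, so dividing by the tower height $N$ gives the vanishing ratio. The standard dimension-growth estimate for recursive subhomogeneous algebras then forces $\mathrm{rc}(A_{Y_n})\to 0$, i.e.\ $A_{Y_n}$ has strict comparison of positive elements in the limit; since each $A_{Y_n}$ is large in $A$, strict comparison transfers to $A$, so $A$ has strict comparison. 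Here $\tr(A)$ is the Choquet simplex $M_T(X)$ of $T$-invariant Borel probability measures on $X$, and for $\tau\in\tr(A)$ the rank function $\mathrm{d}_\tau$ restricts on $\mathrm{C}(X)$ to the corresponding measure.

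Finally I would upgrade strict comparison to $\mathcal Z$-absorption by verifying a central-sequence regularity of property-(SI) type and invoking a Toms--Winter-type theorem. The small-boundary Rokhlin towers yield, for each height $N$, Borel sets that are $T$-invariant up to an error whose $2$-norm over $\tr(A)$ tends to $0$; cutting $\mathrm{C}(X)$ by approximate indicators of these sets and spreading them along the towers with $u,u^2,\dots$ produces, inside $(l^\infty(A)/J_{2,\omega,\tr(A)})\cap A'$, an approximately central family of pairwise orthogonal positive elements, pairwise equivalent in trace, summing approximately to $1$ --- the ``divisibility'' half of the hypotheses --- while strict comparison provides the comparison half; together these give a unital embedding of a dimension-drop algebra into the central sequence algebra of $A$, hence $A\cong A\otimes\mathcal Z$. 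The main obstacle is precisely this last step: the trace simplex $M_T(X)$ need not be Bauer, so the divisibility and comparison estimates obtained tower-by-tower must be made to hold \emph{uniformly} over all invariant measures simultaneously --- which is exactly why the small boundary property, rather than merely pointwise zero mean dimension, is needed, and what forces careful bookkeeping of uniform $2$-norm errors over $\tr(A)$ throughout.
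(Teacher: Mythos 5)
Your overall strategy is sound, but you take a genuinely different route from the one the paper relies on. The paper obtains Theorem \ref{old-thm} by citing \cite{EN-MD0}, where the argument works directly on the crossed product via Rokhlin tower approximation and a radius-of-comparison bound $\mathrm{rc}(A)\leq\frac{1}{2}\mathrm{mdim}(X,T)$, combined with a tracial-divisibility construction, and does not pass through orbit-breaking subalgebras; the general-group version in this paper goes through the (URP)/(COS) machinery of \cite{Niu-MD-Z} and \cite{Niu-MD-Z-absorbing}. You instead route strict comparison through Phillips--Putnam large subalgebras $A_Y$ and package the central-sequence side as uniform property $\Gamma$ (from SBP, \`a la Kerr--Szab\'o) plus a CETW-type Toms--Winter upgrade. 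Both schemes are legitimate; yours leans on the more recent classification technology while the cited proof is self-contained within the Rokhlin-tower framework.

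There is, however, one step in your sketch that is stated incorrectly and would fail as written. You assert that the recursive subhomogeneous blocks of $A_{Y_n}$ satisfy $\dim(\text{base space})/(\text{matrix size})\to 0$ because ``zero mean dimension forces the covering dimension of the union of $N$ consecutive translates to grow like $o(N)$.'' The base levels of a Kakutani--Rokhlin tower are closed subsets of $X$, and the covering dimension of $\bigcup_{j=0}^{N-1}T^{-j}(U)$ is bounded by $\dim X$ uniformly in $N$ --- it does not grow, and when $\dim X=\infty$ (the only interesting case) this gives no bound at all, so the claimed ratio need not tend to zero. What zero mean dimension actually controls is a different quantity: the order $\mathcal D\bigl(\bigvee_{j=0}^{N-1}T^{-j}\mathcal U\bigr)$ of iterated refinements of a finite open cover $\mathcal U$, which grows like $o(N)$ by definition. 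The estimate $\mathrm{rc}(A)\leq\frac{1}{2}\mathrm{mdim}(X,T)$ is derived directly from that order estimate via a careful comparison argument over the towers (\cite{EN-MD0}, \cite{Niu-MD-Z}), not from any bound on the covering dimension of the base sets. If you replace your heuristic with this radius-of-comparison estimate (noting that $A_Y$ being a large subalgebra transfers $\mathrm{rc}(A_Y)=\mathrm{rc}(A)$, rather than giving a sequence with $\mathrm{rc}(A_{Y_n})\to 0$), the remainder of your plan --- SBP gives uniform property $\Gamma$, strict comparison plus $\Gamma$ gives $\mathcal Z$-absorption --- goes through.
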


The main motivation of this work is the converse question of this theorem.

\subsection{Uniform Rokhlin property and Cuntz comparison of open sets}

The following two properties were introduced in \cite{Niu-MD-Z}.
\begin{defn}[Definition 3.1 and Definition 4.1 of \cite{Niu-MD-Z}]\label{defn-URP-COS}
A topological dynamical system $(X, \Gamma)$, where $\Gamma$ is a discrete amenable group, is said to have the uniform Rokhlin property (URP) if for any $\eps>0$ and any finite set $K\subseteq \Gamma$, there exist closed sets $B_1, B_2, ..., B_S \subseteq X$ and $(K, \eps)$-invariant sets $\Gamma_1, \Gamma_2, ..., \Gamma_S \subseteq \Gamma$ such that the transformed sets 
$$B_s\gamma,\quad \gamma\in \Gamma_s,\quad s=1, ..., S, $$
are mutually disjoint and  
$$\mathrm{ocap}(X\setminus\bigsqcup_{s=1}^S\bigsqcup_{\gamma\in \Gamma_s}B_s\gamma) < \eps,$$
where the abbreviation $\mathrm{ocap}$ stands for orbit capacity (see, for instance, Definition 5.1 of \cite{Lindenstrauss-Weiss-MD}).

The dynamical system $(X, \Gamma)$ is said to have $(\lambda, m)$-Cuntz-comparison of open sets, where $\lambda\in (0, 1]$ and $m\in \mathbb N$, if for any open sets $E, F\subseteq X$ with $$ \mu(E) < \lambda \mu(F),\quad \mu \in\mathcal M_1(X, \Gamma),$$ 
where $\mathcal{M}_1(X, \Gamma)$ is the simplex of all invariant probability measures on $X$, it follows that 
$$\varphi_E \precsim \underbrace{\varphi_F\oplus\cdots \oplus \varphi_F}_m\quad\mathrm{in}\ \mathrm{C}(X)\rtimes\Gamma,$$ 
where $\varphi_E$ and $\varphi_F$ are continuous functions with open supports $E$ and $F$ respectively.

The dynamical system $(X, \Gamma)$ is said to have Cuntz comparison of open sets (COS) if it has $(\lambda, m)$-Cuntz-comparison on open sets for some $\lambda$ and $m$.
\end{defn}

\begin{thm}[\cite{Niu-MD-Z} and \cite{Niu-MD-Zd}]
Let $(X, \Gamma)$ be a minimal and free dynamical system.
\begin{itemize}
\item If $\Gamma = \Int^d$, then $(X, \Gamma)$ has the (URP) and (COS).
\item If $\Gamma$ is finitely generated and has sub-exponential growth, and if $(X, \Gamma)$ has a Cantor factor, then $(X, \Gamma)$ has the (URP) and (COS).
\end{itemize}
\end{thm}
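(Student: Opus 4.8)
My plan is to prove the URP and (COS) separately in each of the two cases, deriving (COS) from the URP both times. For the URP the mechanism is the same: produce a topological Rokhlin tower decomposition of $X$ by translates of Følner shapes with a leftover of small orbit capacity. When $\Gamma=\Int^d$ I would start from the marker property of free minimal $\Int^d$-systems (Gutman; Gutman--Lindenstrauss--Tsukamoto): for each $n$ there is a closed $M\subseteq X$ with $M\gamma\cap M=\varnothing$ for $0\ne\gamma\in[-n,n]^d$ and $X=\bigcup_{\gamma\in[-2n,2n]^d}M\gamma$. Fixing $n$ large relative to the given $\eps$ and $K$, I would tessellate each orbit $x\Int^d$ around its marker points; the resulting tiles realize boundedly many shapes, which a standard combinatorial partition of $\Int^d$ refines into genuinely $(K,\eps)$-invariant Følner sets $\Gamma_1,\dots,\Gamma_S$. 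Setting $B_s$ to be the (closed) set of base points whose tile is $\Gamma_s$, the transforms $B_s\gamma$ ($\gamma\in\Gamma_s$) are disjoint by the separation of $M$, and the uncovered set sits in an $n$-neighbourhood of tile boundaries together with the perturbation needed to close up the $B_s$; Følnerness of the shapes and largeness of $n$ force its orbit capacity below $\eps$. In the second case, with a Cantor factor $\pi\colon(X,\Gamma)\to(Y,\Gamma)$, I would instead invoke that free actions of countable amenable groups on zero-dimensional spaces admit clopen exact tilings by finitely many $(K,\eps)$-invariant Følner shapes (Downarowicz--Huczek--Zhang), apply this on $(Y,\Gamma)$, and pull the clopen bases back along $\pi$: now the bases are clopen in $X$ and there is no leftover at all.

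For (COS), fix $\lambda$ small and $m$ (the bound will emerge from the matching step) and suppose $\mu(E)<\lambda\mu(F)$ for all $\Gamma$-invariant $\mu$. I would apply the URP with $\eps$ smaller than $\inf_\mu\mu(F)$ (which is positive by minimality) and with $K$ large enough to resolve $E$ and $F$ at a fixed propagation radius $R$, obtaining towers $(B_s,\Gamma_s)$. On the part of $X$ covered by the towers, the hereditary subalgebra generated by the $\chi_{B_s}u_\gamma$, $\gamma\in\Gamma_s$, is a matrix algebra over an abelian algebra, so Cuntz comparison there is pointwise over the bases: it suffices, for each base point $x$, to inject $\{\gamma\in\Gamma_s:x\gamma\in E\}$ into $m$ disjoint copies of $\{\gamma\in\Gamma_s:x\gamma\in F\}$ by a map moving each point $\Gamma$-distance $\le R$. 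This is Hall's marriage problem for the graph $\gamma\sim\gamma'\iff\mathrm d(\gamma,\gamma')\le R$, whose marriage condition I would verify by averaging the hypothesis $\mu(E)<\lambda\mu(F)$ over $R$-windows inside $\Gamma_s$ (using minimality and compactness of $\mathcal M_1(X,\Gamma)$) to obtain the local inequality ``$\#\{E\text{-hits}\}<\lambda\,\#\{F\text{-hits}\}$ in every window'', which for $\lambda$ small relative to the window overlap yields the marriage condition with a multiplicity $m$ controlled by the growth of $\Gamma$ — this is precisely where the $\Int^d$ geometry, resp.\ subexponential growth, enters. The uncovered set has orbit capacity $<\eps<\inf_\mu\mu(F)$, so a direct Rokhlin-tower argument in the crossed product (not invoking comparison) gives $\varphi$ of it $\precsim\varphi_F$; assembling the tower-wise comparisons with this remainder yields $\varphi_E\precsim\varphi_F^{\oplus m}$.

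The main obstacle I anticipate is the matching step and its globalization. One must run Hall's theorem uniformly in the base point with $m$ and $R$ fixed once and for all, independently of $E$ and $F$, and then patch the resulting local partial isometries across the finitely many towers and across the Følner boundaries without spoiling the norm estimates; this boundary bookkeeping is exactly what dictates the $(K,\eps)$-invariance demanded in the URP and what makes the growth hypothesis indispensable (for exponential growth the window-averaging argument has no uniform multiplicity bound). A secondary difficulty, specific to the $\Int^d$ case, is that the bases $B_s$ are only closed rather than clopen; I would absorb the perturbation needed to close them into the small-orbit-capacity leftover, which is harmless since (COS) only ever compares open sets.
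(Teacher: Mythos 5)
This theorem is quoted from \cite{Niu-MD-Z} and \cite{Niu-MD-Zd} without proof in the present paper, so your proposal must be weighed against those references rather than against an in-paper argument. Your (URP) outline for $\Int^d$ via markers and tile refinement is essentially the route taken in \cite{Niu-MD-Zd}. In the Cantor-factor case, however, invoking Downarowicz--Huczek--Zhang needs more care than you give it: that theorem produces exact tilings of the group $\Gamma$ itself, not directly a clopen Kakutani--Rokhlin partition of a zero-dimensional $\Gamma$-space; the passage from a group tiling system to a dynamical tiling of $(Y,\Gamma)$ is an extra step, and you also never explain where sub-exponential growth enters your (URP) construction even though the hypothesis is part of the statement.

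The real gap is in the (COS) derivation. You want to verify Hall's marriage condition for a distance-$R$ matching by ``averaging $\mu(E)<\lambda\mu(F)$ over $R$-windows'' to obtain a pointwise count inequality on every window. That inference fails: the hypothesis constrains only invariant measures, and for a fixed radius $R$ the numbers of $E$-hits and $F$-hits inside a single $R$-window along an orbit are entirely unconstrained by those measures; density control via minimality and compactness of $\mathcal M_1(X,\Gamma)$ is available only in the F\o lner limit, i.e.\ over the whole shape $\Gamma_s$, not over small windows inside it. The bounded-propagation requirement is in any case unnecessary (a matching moving points across all of $\Gamma_s$ still yields a legitimate element of $\mathrm{C}(X)\rtimes\Gamma$), while the genuinely delicate point, which your sketch never engages, is that the injection must vary \emph{continuously} with the base point $x\in B_s$: the tower-wise rank of $\varphi_E$ jumps as $x$ crosses the boundaries $\partial(E\gamma^{-1})$, and turning pointwise cardinality comparisons into a single Cuntz subequivalence in the tower's hereditary subalgebra $\mathrm{M}_n(\mathrm{C}_0(Z_s))$ requires stratifying the base by these rank levels and patching across strata --- exactly the kind of argument that the well-supported-element machinery of Section 3 of this paper implements. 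The cited proofs organize the comparison around such a stratification (and a cardinality comparison over the whole F\o lner shape), not around window-wise marriage. Your ``patching across towers and F\o lner boundaries'' addresses the inter-tower bookkeeping but not this intra-tower continuity, which is where the argument as written would break down.
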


The (UPR) implies that the C*-algebra $\mathrm{C}(X)\rtimes\Gamma$ can be weakly tracially approximated by the homogeneous C*-algebras generated by the Rokhlin towers. Together with the (COS), it has the following implications for the C*-algebra $\mathrm{C}(X) \rtimes \Gamma$:
\begin{thm}[\cite{Niu-MD-Z}, \cite{Niu-MD-Z-absorbing}, \cite{LN-sr1}]
Let $(X, \Gamma)$ be a minimal and free dynamical system with the (URP) and (COS), and let $A = \mathrm{C}(X) \rtimes\Gamma$. Then:
\begin{itemize}
\item $\mathrm{rc}(A) \leq \frac{1}{2} \mathrm{mdim}(X, \Gamma)$, where $\mathrm{rc}(A)$ is the radius of comparison of $A$; 
\item $A$ has stable rank one, i.e., invertible elements are dense;
\item $A \cong A\otimes\mathcal Z$ if, and only if, $A$ has strict comparison for positive elements; in particular,
\item if $(X, \Gamma)$ has the (SBP), then $A \cong A\otimes\mathcal Z$.
\end{itemize}

\end{thm}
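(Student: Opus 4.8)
Since all four assertions are drawn from \cite{Niu-MD-Z}, \cite{Niu-MD-Z-absorbing} and \cite{LN-sr1}, my plan is to record how each follows from a single structural consequence of the (URP). For every $\eps>0$ and finite $K\subseteq\Gamma$, the Rokhlin towers $B_s\times\Gamma_s$ produce a sub-C*-algebra
$$ C=\bigoplus_{s=1}^S C_s\subseteq A,\qquad C_s\cong\mathrm{M}_{\abs{\Gamma_s}}(\mathrm{C}(B_s)), $$
whose unit differs from $1_A$ by a positive element supported over the leftover set, which has orbit capacity $<\eps$ and hence is an open set of arbitrarily small measure for every invariant measure. Moreover, by the dimension estimates of \cite{Niu-MD-Z} and \cite{Niu-MD-Zd}, after refining the towers along a F{\o}lner sequence one may arrange $\dim B_s\le(\mathrm{mdim}(X,\Gamma)+\eps)\abs{\Gamma_s}$. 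This weak tracial approximation by homogeneous tower algebras of controlled dimension ratio, together with the (COS), is the engine, and I would run it four times.

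\emph{The radius of comparison.} Given positive $a,b\in A$ with $\mathrm{d}_\tau(a)+\tfrac12\mathrm{mdim}(X,\Gamma)+\eta<\mathrm{d}_\tau(b)$ for all $\tau\in\tr(A)$, I would fix a sufficiently fine $(\eps,K)$, cut $a$ and $b$ down into the tower algebra $C$, and apply Toms' bound $\mathrm{rc}(\mathrm{M}_n(\mathrm{C}(Y)))\le\dim Y/(2n)$ inside each $C_s$ --- which here is at most $\tfrac12(\mathrm{mdim}(X,\Gamma)+\eps)$ --- to obtain Cuntz subequivalence of the compressions within $C$. The part of $a$ not captured by $C$ lives over a set of small orbit capacity, hence over an open set of arbitrarily small invariant measure, so by (COS) it is Cuntz-dominated by any fixed open support; absorbing it and letting $\eps,\eta\to0$, with the usual $(a-\delta)_+$ cutdown to convert approximate subequivalences into genuine ones, gives $a\precsim b$. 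Hence $\mathrm{rc}(A)\le\tfrac12\mathrm{mdim}(X,\Gamma)$.

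\emph{Stable rank one.} The tower constituents $\mathrm{M}_n(\mathrm{C}(Y))$ need not have stable rank one, so one cannot merely pass the property along the approximation. Following \cite{LN-sr1}, I would instead use that an element of $\mathrm{M}_n(\mathrm{C}(Y))$ can, after an $\eps$-perturbation, be made invertible off a closed subset over which its kernel bundle has fibre dimension $\lesssim\dim Y$, which is far smaller than $\abs{\Gamma_s}$ for the towers above; this low-rank defect is Cuntz-dominated by a projection of small normalized trace, and using (COS) to transport that projection into the small leftover corner of $A$, the perturbed element becomes invertible in $A$. Combined with the tracial approximation this shows that invertible elements are dense in $A$.

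\emph{$\mathcal Z$-stability, and the (SBP) case.} The implication $A\cong A\otimes\mathcal Z\Rightarrow$ strict comparison is R{\o}rdam's general fact, the Cuntz semigroup of a simple $\mathcal Z$-absorbing algebra being almost unperforated. For the converse I would follow \cite{Niu-MD-Z-absorbing} along Matui--Sato lines: strict comparison together with stable rank one gives property (SI) for $A$, while the (URP) supplies the Rokhlin-type central sequences; feeding these into the (SI) machinery produces a unital map from the relevant dimension-drop algebra into the central sequence algebra $(l^\infty(A)/J_{2,\omega,\tr(A)})\cap A'$, whence $A\cong A\otimes\mathcal Z$. This last extraction --- turning comparison plus Rokhlin structure into an actual $\mathcal Z$-stability witness --- is the step I expect to be the main obstacle. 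Finally, if $(X,\Gamma)$ has the (SBP) then $\mathrm{mdim}(X,\Gamma)=0$ by Lindenstrauss--Weiss, so the first item forces $\mathrm{rc}(A)=0$; a routine argument (compactness of $\tr(A)$ and an $(a-\delta)_+$ cutdown to manufacture a uniform gap in $\mathrm{d}_\tau$) upgrades this to strict comparison, and the third item then gives $A\cong A\otimes\mathcal Z$.
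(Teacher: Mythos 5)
The theorem you are proving is stated in the paper as a compilation of results cited from \cite{Niu-MD-Z}, \cite{Niu-MD-Z-absorbing}, and \cite{LN-sr1}; the paper does not prove it, so there is no in-paper argument to compare against. Assessing your sketch on its own merits:

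Your sketches of items (1), (2), and (4) are in the right spirit. The engine you identify --- (URP) producing a weak tracial approximation by tower algebras $C_s\cong\mathrm{M}_{\abs{\Gamma_s}}(\mathrm{C}(B_s))$ with controlled ratio $\dim B_s/\abs{\Gamma_s}$, the leftover corner absorbed via (COS) --- is indeed what drives the radius-of-comparison bound, and your observation in item (2) that stable rank one of the tower constituents cannot simply be transferred, so that one must handle the low-rank defect by comparison and push it into the small corner, is exactly the non-obvious point in \cite{LN-sr1}. (Your phrase ``kernel bundle has fibre dimension $\lesssim\dim Y$'' is not literally meaningful for an arbitrary element of $\mathrm{M}_n(\mathrm{C}(Y))$ and the actual perturbation argument is more delicate, but the ``small defect, absorbed via (COS)'' idea is correct.) Item (4) via Lindenstrauss--Weiss, $\mathrm{rc}=0$, and compactness of $\tr(A)$ is routine as you say.

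Item (3) has a genuine gap. You assert that ``strict comparison together with stable rank one gives property (SI) for $A$.'' That implication is not available in this generality. Deriving property (SI) from strict comparison, in the Matui--Sato framework and its extensions (\cite{Matui-Sato-CP}, \cite{Sato-CP}, \cite{KR-CenSeq}, \cite{TWW-Z}), requires that the extreme boundary of $\tr(A)$ be compact and of finite covering dimension; for a minimal free action, $\tr(A)$ can be an arbitrary metrizable Choquet simplex (indeed the Poulsen simplex occurs), so that hypothesis fails, and (SI) is not known to follow from strict comparison plus stable rank one alone. Moreover, the references the paper actually invokes for the equivalence (3) $\Leftrightarrow$ (4) --- Corollary 7.14 of \cite{LN-sr1} for crossed products, and Theorem 4.1 of \cite{EHT-sr1} together with Theorem 9.5 of \cite{Thiel-sr1} for AH algebras --- do not pass through property (SI) at all; they extract $\mathcal Z$-stability from strict comparison via the Cuntz-semigroup structure available in a simple stably-finite algebra of stable rank one (Thiel's realization-of-ranks theorem, almost-divisibility, and Winter's $\mathcal Z$-stability criterion). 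You flagged ``turning comparison plus Rokhlin structure into a $\mathcal Z$-stability witness'' as the main obstacle, and indeed the SI-based route you propose does not clear it; the Cuntz-semigroup route is the one that does.
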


\section{Property (S)}

Motivated by Theorem \ref{SBP-2-norm}, let us introduce the following property of a C*-algebra.
\begin{defn}\label{defn-S}
Let $A$ be a unital C*-algebra, and let $\Delta \subseteq \tr(A)$ be a closed set of tracial states. The pair $(A, \Delta)$ will be said to have Property (S) if for any self-adjoint element $f$ and any $\eps>0$, there is a self-adjoint $g \in A$ such that 
\begin{enumerate}
\item $\norm{f - g }_{2, \Delta} < \eps$, and
\item there is $\delta>0$ such that $\tau(\chi_\delta(g)) < \eps$, $\tau \in \Delta$,
where 
\begin{equation}\label{chi-defn}
\chi_\delta(t) = \left\{ \begin{array}{ll}
1, & \abs{t} < \delta, \\
2-\abs{t}/\delta, & \delta\leq \abs{t} < 2\delta, \\
0, & \mathrm{otherwise}.
\end{array}\right.
\end{equation}
\end{enumerate}

In the case that $\Delta = \tr(A)$, we shall just say that $A$ has Property (S) if $(A, \tr(A))$ has Property (S).
\end{defn}

Compared to Theorem \ref{SBP-2-norm}, Property (S) can be regarded as a weaker version of the small boundary property, without referring to a commutative subalgebra $D$. 

In general, it is weak than the actual small boundary property, as shown in Example \ref{weak-S} later in this section. However, it will be shown  (Proposition \ref{prop-S}) that, if some other properties (Properties (C) and (E)) are provided, then Property (S) for $A$ implies the small boundary property of the pair $(D, A)$.

Property (S) is closed related to the real rank of the sequence algebra:
\begin{defn}\label{qRR-defn}
Let $A$ be a C*-algebra, and let $\Delta\subseteq\tr(A)$. Let us say that $\mathrm{qRR}(l^\infty(A)/J_{2, \omega, \Delta}) = 0$ if the class of the constant sequence of any self-adjoint element of $A$ can be approximated by invertible self-adjoint elements of $l^\infty(A)/J_{2, \omega, \Delta}$ with respect to the uniform limit trace norm $\norm{\cdot}_{2, \omega, \Delta}$. It is clear that if $\mathrm{RR}(A) = 0$ or if $\mathrm{RR}(l^\infty(A)/J_{2, \omega, \Delta}) = 0$, then $\mathrm{qRR}(l^\infty(A)/J_{2, \omega, \Delta}) = 0$.

\end{defn}

\begin{prop}\label{character-S}
Let $A$ be a unital C*-algebra, and let $\Delta\subseteq\tr(A)$ be closed. The following conditions are equivalent:
\begin{enumerate}
\item[(1)] $\mathrm{qRR}(l^\infty(A)/J_{2, \omega, \Delta}) = 0$;
\item[(2)] $(A, \Delta)$ has Property (S);
\item[(3)] $\mathrm{RR}(l^\infty(A)/J_{2, \omega, \Delta}) = 0$.
\end{enumerate}
\end{prop}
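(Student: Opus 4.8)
The plan is to prove the cycle $(3)\Rightarrow(1)\Rightarrow(2)\Rightarrow(3)$. The implication $(3)\Rightarrow(1)$ is immediate from Definition~\ref{qRR-defn}, so all the content is in $(1)\Rightarrow(2)$ and, above all, in $(2)\Rightarrow(3)$. Throughout I write $B=l^\infty(A)/J_{2,\omega,\Delta}$, and I will repeatedly use the elementary pointwise inequality $\chi_\delta\le\chi_{2\delta}^2$, which turns $2$-norm smallness of a $\chi_{2\delta}$-type element into smallness of $\sup_{\tau}\tau(\chi_\delta(\cdot))$, and the fact that, for self-adjoint $y$, invertibility of $y$ in a C*-algebra is equivalent to $\chi_\delta(y)=0$ for some $\delta>0$.

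For $(1)\Rightarrow(2)$: given a self-adjoint contraction $f\in A$ and $\eps>0$, I would apply $\mathrm{qRR}(B)=0$ to the constant sequence $(f)$ to obtain an invertible self-adjoint $\bar h\in B$ with $\norm{(f)-\bar h}_{2,\omega,\Delta}<\eps/2$, lift it to a self-adjoint sequence $(h_n)\in l^\infty(A)$, and use invertibility to get $\delta_0>0$ with $\chi_{\delta_0/2}(\bar h)=0$ in $B$, i.e.\ $\norm{\chi_{\delta_0/2}(h_n)}_{2,\Delta}\to0$ along $\omega$. Then, with $\delta=\delta_0/4$, one has $\sup_{\tau}\tau(\chi_\delta(h_n))\le\norm{\chi_{\delta_0/2}(h_n)}_{2,\Delta}^2\to0$ along $\omega$, while $\norm{f-h_n}_{2,\Delta}\to\norm{(f)-\bar h}_{2,\omega,\Delta}<\eps/2$ along $\omega$; choosing $n$ in the (nonempty) intersection of the two corresponding members of $\omega$, and taking $g=h_n$ cut down to the unit ball (which changes neither estimate), gives Property~(S).

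For $(2)\Rightarrow(3)$ I would use the characterisation (Brown and Pedersen) that a unital C*-algebra has real rank zero if and only if every hereditary subalgebra has an approximate unit of projections; concretely it suffices to show that for every positive $\bar a\in B$ and $\eps>0$ there is a projection $\bar p\in\overline{\bar aB\bar a}$ with $\norm{\bar a(1-\bar p)}<\eps$. Lift $\bar a$ to positive contractions $(a_n)$, and for each $n$ apply Property~(S) to the self-adjoint contraction $a_n-\eps/2$ with tolerance $\eta_n\downarrow0$, obtaining a self-adjoint contraction $g_n\in A$ with $\norm{(a_n-\eps/2)-g_n}_{2,\Delta}<\eta_n$ and some $\delta_n>0$ with $\sup_{\tau}\tau(\chi_{\delta_n}(g_n))<\eta_n$, so that $\bar g:=(g_n)=\bar a-\eps/2$ in $B$. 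Fixing continuous $\phi_n$ equal to $1$ on $[\delta_n,\infty)$ and to $0$ on $(-\infty,\delta_n/2]$, put $\bar p:=(\phi_n(g_n))+J_{2,\omega,\Delta}$. The transition region of $\phi_n$ lies inside $\{|t|<\delta_n\}$, where $\chi_{\delta_n}\equiv1$, so $\norm{\phi_n(g_n)^2-\phi_n(g_n)}_{2,\Delta}\le\tfrac14\norm{\chi_{\delta_n}(g_n)}_{2,\Delta}\to0$ along $\omega$; hence $\bar p$ is a genuine self-adjoint projection in $B$. The crucial observation — which makes the uncontrolled size of $\delta_n$ harmless — is that a mere $2$-norm approximate spectral projection of $g_n$ becomes an honest projection in the quotient, so Property~(S) must be used not to approximate $\bar a$ directly but to manufacture this projection.

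It remains to locate $\bar p$. Comparing $\phi_n(g_n)$ with the true spectral projections of $g_n$ via the isometric embedding $B\hookrightarrow l^\infty(A^{**})/\widetilde J$ (isometric since a normal extension of $\tau\in\Delta$ to $A^{**}$ restricts to $\tau$ on $A$), one has $\mathbf 1_{(0,\infty)}(g_n)\ge\phi_n(g_n)\ge\mathbf 1_{[\delta_n,\infty)}(g_n)\ge\mathbf 1_{[\eps/2,\infty)}(g_n)-\mathbf 1_{[\eps/2,\delta_n)}(g_n)$, the subtracted term being dominated by $\chi_{\delta_n}(g_n)$ and hence $2$-norm null. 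Passing to $B$, this yields $\mathbf 1_{[\eps,\infty)}(\bar a)\le\bar p\le\mathbf 1_{(\eps/2,\infty)}(\bar a)$ in the bidual: the upper bound, together with $\mathbf 1_{(\eps/2,\infty)}(\bar a)\le f_2(\bar a)$ for a continuous $f_2$ with $f_2(0)=0$ and $0\le f_2\le1$, forces $f_2(\bar a)\bar p=\bar p$, whence $\bar p=f_2(\bar a)\bar p f_2(\bar a)\in\overline{\bar aB\bar a}$; the lower bound forces $\bar p(\bar a-2\eps)_+=(\bar a-2\eps)_+$, so $\norm{\bar a(1-\bar p)}\le5\eps/4$. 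Rescaling $\eps$ completes $(2)\Rightarrow(3)$. The main obstacle is exactly this implication: after the right idea (build a spectral projection in $B$ from Property~(S)), one must carefully check that $\bar p$ is a projection of $B$ sitting in the correct hereditary subalgebra and above the correct support — and this rests entirely on $2$-norm negligibility of transition and boundary spectral projections rather than on any uniform spectral gap.
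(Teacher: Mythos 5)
Your implications $(3)\Rightarrow(1)$ and $(1)\Rightarrow(2)$ coincide with the paper's. Your $(2)\Rightarrow(3)$, however, takes a genuinely different route: the paper directly manufactures an invertible approximant by pushing the spectrum of the Property~(S) approximant $b_n$ away from $0$ with an explicit pair of functions $g_n,h_n$ and then verifying $\tau((b'_nc_n-1)^2)<\tau(\chi_{\delta_n}(b_n))$, a short, entirely $\mathrm C^*$-level computation. You instead invoke the Brown--Pedersen characterisation $\mathrm{RR}=0\Leftrightarrow(\mathrm{HP})$ and build a projection $\bar p$ in the quotient as a limit of continuous quasi-spectral functions of $g_n$. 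That idea is sound and instructive (it makes visible that the quotient has an abundance of projections), but the paper's route is shorter and avoids the delicate point below.

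There is a real gap in the step ``Passing to $B$, this yields $\mathbf 1_{[\eps,\infty)}(\bar a)\le\bar p\le\mathbf 1_{(\eps/2,\infty)}(\bar a)$.'' The upper estimate is \emph{false} as stated: take $A=\Comp$, $a_n\equiv\eps/2$, $g_n=1/n$, $\delta_n=1/(2n)$. Then $\norm{(a_n-\eps/2)-g_n}_{2,\Delta}\to0$, $\tau(\chi_{\delta_n}(g_n))=0$, $\phi_n(g_n)=1$, so $\bar p=1$, while $\mathbf 1_{(\eps/2,\infty)}(\bar a)=0$. The root problem is that $\phi_n(g_n)\le\mathbf 1_{(0,\infty)}(g_n)$ only gives $\bar p\le\overline{(\mathbf 1_{(0,\infty)}(g_n))}$ in the ultraproduct, and there is no reason for the latter to lie below $\mathbf 1_{(0,\infty)}(\bar g)$: Borel functional calculus does not commute with passing to a tracial ultraproduct, and $2$-norm closeness of $g_n$ to $a_n-\eps/2$ says nothing about their spectral projections. (The correct inequality would have a closed bracket, $\bar p\le\mathbf 1_{[\eps/2,\infty)}(\bar a)$, but your derivation does not give even that.)

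The fix is available with your own ingredients and avoids the bidual altogether. First note $\bar p$ commutes with $\bar a$: $\phi_n(g_n)$ commutes with $g_n$, and $\bar g=\overline{(g_n)}=\bar a-\eps/2$, so $[\bar p,\bar a]=0$. Next, the scalar inequality $\phi_n(t)^2t\ge0$ (true for all $t$, since $\phi_n$ vanishes on $(-\infty,\delta_n/2]$) gives $\phi_n(g_n)\,g_n\,\phi_n(g_n)\ge0$ in operator norm, hence $\bar p\,\bar g\,\bar p\ge0$ in $B$, i.e.\ $\bar p\bar a\bar p\ge(\eps/2)\bar p$. With commutativity this is exactly $\bar p\le\mathbf 1_{[\eps/2,\infty)}(\bar a)$, and then $f_2(\bar a)\bar p=\bar p$ for your $f_2$ (continuous, $f_2(0)=0$, $f_2\equiv1$ on $[\eps/2,1]$), giving $\bar p=f_2(\bar a)\bar p f_2(\bar a)\in\overline{\bar aB\bar a}$. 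For the other side, the scalar bound $\bigl((1-\phi_n)^2(t)\,t\bigr)_+\le\delta_n\,\chi_{\delta_n}(t)$ (after normalising $\delta_n\le1$, which is harmless since $\chi_{\delta'}\le\chi_{\delta_n}$ for $\delta'\le\delta_n$) gives $\norm{\bigl((1-\phi_n(g_n))g_n(1-\phi_n(g_n))\bigr)_+}_{2,\Delta}\le\delta_n\norm{\chi_{\delta_n}(g_n)}_{2,\Delta}\to0$, so $(1-\bar p)\bar g(1-\bar p)\le0$, i.e.\ $(1-\bar p)\bar a\le(\eps/2)(1-\bar p)$, whence $\norm{\bar a(1-\bar p)}\le\eps/2$. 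This replaces your spectral-projection chain and makes the argument airtight.
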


\begin{proof}
$(1) \Rightarrow (2)$: Assume $\mathrm{qRR}(l^\infty(A)/J_{2, \omega, \Delta}) = 0$.  Let $(f, \eps)$ be given, where $f \in A$ is a self-adjoint contraction and $\eps>0$. Consider the constant sequence $(f)$, and then there is a sequence $(g_k) \in l^\infty(A)$ such that 
$$\norm{(f - g_k)}_{2, \omega, \Delta} = \lim_{k \to \omega}\norm{f - g_k}_{2, \Delta} < \eps$$
and $\overline{(g_k)}$ is invertible in $l^\infty(A)/J_{2, \omega, \Delta}$. Then there is $\delta>0$ such that
$$\overline{(\chi_\delta(g_k))} = \chi_\delta(\overline{(g_k)}) = 0.$$ 
In other words,
$$ (\chi_\delta(g_k)) \in J_{2, \omega, \Delta}.$$
Then, with some sufficiently large $k$, one has
\begin{itemize}
\item $\norm{f - g_k}_{2, \Delta} < \eps$, and
\item $\tau(\chi_\delta(g_k)) < \eps$, $\tau\in \Delta$,
\end{itemize}
as desired.

$(2) \Rightarrow (3)$: Assume that $(A, \Delta)$ has Property (S), and let us show that $l^\infty(A)/ J_{2, \omega, \Delta}$ has real rank zero. Let $\overline{(a_1, a_2, ...)} \in l^\infty(A)/ J_{2, \omega, \Delta}$ be a self-adjoint element, and let $\eps>0$ be arbitrary. By Property (S), for each $n=1, 2, ...$, there is a self-adjoint element $b_n$ such that
\begin{itemize}
\item $\tau((a_n - b_n)^2) < 1/n$ for all $\tau\in\Delta$, and
\item there is $\delta_n>0$ such that $\tau(\chi_{\delta_n}(b_n)) < 1/n$ for all $\tau\in\Delta$.
\end{itemize}
Without loss of generality, one may assume that $\delta_n<\eps$.
Note that $$\overline{(a_1, a_2, ...)} = \overline{(b_1, b_2, ...)}.$$

Consider $b_n' = g_n(b_n)$ and $c_n = h_n(b_n)$, where
$$
g_n(t) = \left\{ \begin{array}{ll}
\eps t/\delta_n, & \abs{t} < \delta_n, \\
t + \mathrm{sign}(t)(\eps-\delta_n), & \mathrm{otherwise}
\end{array}\right.
$$
and
$$
h_n(t) = \left\{ \begin{array}{ll}
t/\eps\delta_n, & \abs{t} < \delta_n, \\
1/(t + \mathrm{sign}(t)(\eps-\delta_n)), & \mathrm{otherwise}.
\end{array}\right.
$$
Then 
$$\norm{b_n' - b_n} < \eps,\quad \norm{c_n} < 1/\eps,\quad \mathrm{and}\quad \tau((b_n'c_n - 1)^2) <\tau(\chi_{\delta_n}(b_n))< 1/n,\quad \tau \in \Delta.$$ In particular, the element $\overline{(b'_1, b'_2, ...)}$ is invertible in $l^\infty(A)/J_{\omega, \Delta}$ with the inverse $\overline{(c_1, c_2, ...)}$. Moreover, $$\norm{\overline{(a_1, a_2, ...)} - \overline{(b'_1, b'_2, ...)}} = \norm{\overline{(b_1, b_2, ...)} - \overline{(b'_1, b'_2, ...)}}< \eps.$$ This shows that $l^\infty(A)/ J_{2, \omega, \Delta}$ has real rank zero.

$(3) \Rightarrow (1)$: Trivial.
\end{proof}

Uniform property $\Gamma$ implies Property (S):
\begin{prop}\label{Property-S-Prop}
If a unital C*-algebra $A$ has uniform property $\Gamma$, then $A$ has Property (S).
\end{prop}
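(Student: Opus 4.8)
The plan is to read off Property (S) directly from the algebraic relations that uniform property $\Gamma$ provides in the sequence algebra $M := l^\infty(A)/J_{2, \omega, \tr(A)}$, and then to transport the resulting element back to $A$ by lifting. Fix a self-adjoint contraction $f \in A$ and $\eps > 0$. First I would choose $n \in \mathbb N$ with $n \geq 2$ and $n > 1/\eps$, pick distinct reals $t_1, \dots, t_n \in [-\eps/2, \eps/2]$, and then choose $\delta > 0$ small enough that the functions $t \mapsto \chi_\delta(t + t_i)$, $i = 1, \dots, n$, have pairwise disjoint supports; with this choice $\sum_{i=1}^n \chi_\delta(\,\cdot\, + t_i) \leq 1$ on $\Real$.

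Next, by uniform property $\Gamma$ there are pairwise orthogonal projections $p_1, \dots, p_n \in M \cap A'$ with $\sum_i p_i = 1$ and $\tau(p_i a p_i) = \frac{1}{n} \tau(a)$ for all $a \in A$ and all $\tau \in \tr(A)_\omega$. Set $\overline{g} := f + \sum_i t_i p_i \in M$. Since the $p_i$ are orthogonal projections commuting with $f$ and summing to $1$, one has $\overline{g} = \sum_i (f + t_i) p_i$, and hence, by the functional calculus,
$$\chi_\delta(\overline{g}) = \sum_{i=1}^n \chi_\delta(f + t_i)\, p_i \quad \mathrm{and} \quad (\overline{g} - f)^2 = \sum_{i=1}^n t_i^2\, p_i.$$
Applying the relation $\tau(p_i b p_i) = \frac{1}{n} \tau(b)$ with $b = \chi_\delta(f + t_i) \in A$ (respectively with $b = 1$), and using $\sum_i \chi_\delta(f + t_i) \leq 1$, this gives, for every $\tau \in \tr(A)_\omega$,
$$\tau(\chi_\delta(\overline{g})) = \frac{1}{n}\, \tau\Big( \sum_{i=1}^n \chi_\delta(f + t_i) \Big) \leq \frac{1}{n} < \eps \quad \mathrm{and} \quad \tau\big((\overline{g} - f)^2\big) = \frac{1}{n} \sum_{i=1}^n t_i^2 \leq \frac{\eps^2}{4}.$$

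Finally I would lift each $p_i$ to a bounded sequence of positive contractions $(p_i^{(k)})_k$ in $l^\infty(A)$ and set $g^{(k)} := f + \sum_i t_i p_i^{(k)} \in A$, which is self-adjoint and has class $\overline{g}$ in $M$. Since the uniform trace norm $\norm{\,\cdot\,}_{2, \omega, \tr(A)}$ on $M$ is the $\omega$-limit of $\norm{\,\cdot\,}_{2, \tr(A)}$ on the entries, and since $\lim_{k \to \omega} \sup_{\tau \in \tr(A)} \tau(c_k) = \sup_{\tau \in \tr(A)_\omega} \tau(\overline{(c_k)})$ for any positive element $\overline{(c_k)}$ of $M$ (by compactness of $\tr(A)$), the two displayed estimates force the set of indices $k$ with both $\norm{f - g^{(k)}}_{2, \tr(A)} < \eps$ and $\sup_{\tau \in \tr(A)} \tau(\chi_\delta(g^{(k)})) < \eps$ to lie in $\omega$, so in particular to be nonempty. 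For any such $k$, the element $g := g^{(k)} \in A$ is self-adjoint and witnesses Property (S) for $(f, \eps)$ with the chosen $\delta$.

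The one real point is this last passage: the exact identities used to evaluate $\chi_\delta(\overline{g})$ hold only in $M$, so one must descend to an honest element of $A$ through a lift, and must check that the spectral--gap estimate survives this descent \emph{uniformly over all of} $\tr(A)$, not merely over the limit traces $\tr(A)_\omega$ --- which is precisely what the reformulation of the uniform trace norm of $M$ in terms of limit traces provides. Everything else is the elementary observation that an orthogonal partition of unity satisfying $\tau(p_i a p_i) = \frac{1}{n} \tau(a)$, together with the spreading of the spectrum by the shifts $t_i$ (which makes $\sum_i \chi_\delta(\,\cdot\, + t_i) \leq 1$), forces the $\delta$-neighbourhood of $0$ in the spectrum of $\overline{g}$ to have tracial mass at most $\frac{1}{n}$.
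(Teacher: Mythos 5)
Your proof is correct, and it follows the same overall strategy as the paper's: produce $n$ small perturbations of $f$ whose near-zero spectral bumps have disjoint supports, average them against the partition of unity $p_1,\dots,p_n$ supplied by uniform property $\Gamma$, exploit the commutation $p_i\in A'$ and the orthogonality to run functional calculus through the sum, estimate tracially, and lift back to $A$. What you do differently, and what is genuinely useful, is twofold. First, where the paper invokes Corollary~3.9 of \cite{EN-SBP-I} as a black box to produce the family $f_1,\dots,f_n$, you construct it by hand as the shifts $f_i = f + t_i$ with $t_i\in[-\eps/2,\eps/2]$ distinct and $\delta$ small enough that the supports of $\chi_\delta(\,\cdot\,+t_i)$ are disjoint; once one observes (as you implicitly do) that the paper's $g=\sum_i p_i\overline{f_i}p_i$ reduces, when $p_i\in A'$, to $f + \sum_i t_i p_i$, the two formulas coincide, so this is a self-contained rederivation of the needed input rather than a new mechanism. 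Second, and more substantively, you make explicit the passage from the estimates in the quotient $M = l^\infty(A)/J_{2,\omega,\tr(A)}$ (stated against $\tr(A)_\omega$) back to a single element of $A$ with estimates uniform over $\tr(A)$: the key identity $\lim_{k\to\omega}\sup_{\tau\in\tr(A)}\tau(c_k) = \sup_{\tau'\in\tr(A)_\omega}\tau'(\overline{(c_k)})$ for positive $\overline{(c_k)}\in M$ (which uses weak-* compactness of $\tr(A)$ to choose near-maximizing $\tau_k$). The paper's ``with some sufficiently large $k$'' is exactly this point, left implicit. Your version is therefore the same proof, written more self-containedly and with the descent step properly justified.
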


\begin{proof}
The proof is similar to the proof of Theorem 3.5 of \cite{EN-SBP-I}:

Let $(f, \eps)$ be given, where $f \in A$ is a self-adjoint contraction and $\eps>0$. By Corollary 3.9 of \cite{EN-SBP-I}, 
for the given $\eps$, there exist $n\in\mathbb N$ and self-adjoint elements $$f_1, f_2, ..., f_n \in A$$  such that
\begin{equation}\label{mult-pert-eq-1}
\norm{f - f_i} < \eps,\quad i=1, 2, ..., n, 
\end{equation}
and 
there is $\delta>0$ such that 
\begin{equation*}
\frac{1}{n}(\tau((f_1)_\delta) + \cdots + \tau((f_n)_\delta)) < \eps,\quad \tau\in\mathrm{T}(A),
\end{equation*} 
where $(f)_\delta = \chi_\delta(f)$ (see \eqref{chi-defn}). 
In particular, regarding $(f_1)_\delta, ..., (f_n)_\delta$ as constant sequences in $A$, we have
\begin{equation}\label{mult-pert-eq-2}
\frac{1}{n}(\tau((f_1)_\delta) + \cdots + \tau((f_n)_\delta)) < \eps,\quad \tau\in\tr(A)_\omega.
\end{equation} 

Since $A$ has uniform property $\Gamma$, there is a partition of unity $$p_1, p_2, ..., p_n \in ( l^\infty(A)/J_{2, \omega, \Delta}) \cap A' $$
such that
\begin{equation}\label{r-gamma-cut-eq}
\tau(p_iap_i) = \frac{1}{n} \tau(a),\quad a\in A,\ \tau\in \tr(A)_\omega.
\end{equation}

Consider the element $$g:=p_1\overline{(f_1)}p_1 + \cdots + p_n\overline{(f_n)}p_n \in l^\infty(A)/J_{2, \omega, \Delta}. $$
By \eqref{mult-pert-eq-1},
\begin{equation}\label{close-cond-1}
\norm{f-g}_{2, \omega, \tr(A)} = \norm{p_1\overline{(f-f_1)}p_1 + \cdots + p_n\overline{(f-f_n)}p_n}_{2, \omega, \tr(A)}  < \eps.
\end{equation}
Note that, for each $\tau \in \tr(A)_\omega$, by \eqref{r-gamma-cut-eq},
$$\tau(p_i\overline{((f_i)_\delta)} p_i) = \frac{1}{n}\tau((f_i)_\delta),\quad i=1, ..., n, \ \tau\in\tr(A)_\omega,$$
and hence, together with \eqref{mult-pert-eq-2},
\begin{eqnarray}\label{close-cond-2}
\tau((g)_\delta) & = & \tau(p_1\overline{((f_1)_\delta)}p_1) + \cdots + \tau(p_n\overline{((f_n)_\delta)}p_n) \\
& = & \frac{1}{n}(\tau((f_1)_\delta) + \cdots + \tau((f_n)_\delta)  ) \nonumber \\
& < & \eps.  \nonumber
\end{eqnarray}
Pick a representative sequence $g = \overline{(g_k)}$ with $g_k$, $k=1, 2, ...$, self-adjoint elements of $A$. By \eqref{close-cond-1} and \eqref{close-cond-2}, with some sufficiently large $k$, the function $g_k$ satisfies
\begin{enumerate}
\item $\norm{f - g_k}_{2, \tr(A)} < \eps$, and
\item $\tau((g_k)_\delta) < \eps$, $\tau\in \tr(A)$,
\end{enumerate}
as desired.
\end{proof}

But Property (S) in general is a weaker property than the actual small boundary property:
\begin{example}\label{weak-S}
Let
$$
\xymatrix{
A_1 \ar[r] & A_2 \ar[r] & \cdots \ar[r] & A}
$$
be a unital AH algebra such that $A_s = \mathrm{M}_{n_s}(\mathrm{C}([0, 1]^{d_s}))$, and $$ \lim_{s \to\infty}n_s = \infty \quad \mathrm{and}\quad  \limsup_{s \to \infty}\frac{d_s}{n_s} = \gamma < \infty.$$ Such C*-algebras include Villadsen algebras which are not $\mathcal Z$-absorbing (so the standard commutative sub-C*-algebra does not have the small boundary property).

Note that $\mathrm{M}_n(\Comp)^{s.a.}$ is a manifold with of dimension $n^2$ and $\mathrm{U}_n(\Comp)$ is a compact Lie group of dimension $n^2$.

Denote by $V_{n, k}$ the set of self-adjoint $n \times n$ matrices with at least $k$ eigenvalues which are zero. It can be written as
$$ V_{n, k}=\{u\mathrm{diag}\{\underbrace{0, ..., 0}_k, \lambda_1, ..., \lambda_{n-k}\}u^*: u \in \mathrm{U}_n(\Comp),\ \lambda_1, ..., \lambda_{n-k} \in \Real\},$$
which is the $\mathrm{U}_n(\Comp)$-orbit of the following $(n-k)$ dimensional submanifold: $$\{\mathrm{diag}\{\underbrace{0, ..., 0}_k, \lambda_1, ..., \lambda_{n-k}\},\quad \lambda_1, ..., \lambda_{n-k} \in \Real\} \subseteq \mathrm{M}_n(\Comp)^{s.a.}.$$ 
For each diagonal matrix above, its stabilizer contains the subgroup
$$
\left\{
\left( 
\begin{array}{cc}
U & 0 \\
0 & I_{n-k}
\end{array}
\right): U \in \mathrm{U}_k(\Comp)
\right\} \subseteq \mathrm{U}_n(\Comp),
$$
which has dimension $k^2$.
The set $V_{n, k}$ is a union of finitely many submanifolds of $ \mathrm{M}_n(\Comp)^{s.a.}$ with dimension at most $$ (n - k) + n^2 - k^2.$$

Now, let $\eps>0$ be arbitrary, and let $a \in A_1$ be a self-adjoint element.
Choose $s$ large enough that if $A_s$ is written as $\mathrm{M}_{n}(\mathrm{C}([0, 1]^{d}))$, then
$$ \frac{d}{n} \leq \gamma+1$$
and
$$
(1 - \eps^2) n^2 + 2\eps n +  (1-\eps)n + (\gamma+1) n + 1 \leq n^2.
$$
Choose a natural number $k$ such that
$$
\frac{k}{n} \leq \eps < \frac{k+1}{n}.
$$

Note that the set $V_{n, k}$ has dimension at most
$$(n-k) + n^2 - k^2 \leq (n - \eps n +1) + n^2 - (\eps n-1)^2.$$
Then 
\begin{eqnarray*}
\mathrm{dim}(V_{n, k}) + d + 1 & \leq & (n - \eps n +1) + (n^2 - (\eps n-1)^2) + (\gamma + 1) n + 1 \\
& = & (1-\eps^2)n^2 +2\eps n  + (1-\eps)n + (\gamma+ 1)n+ 1 \\
&\leq & n^2 = \mathrm{dim}(\mathrm{M}_n(\Comp)^{s.a.}).
\end{eqnarray*}

Hence, the self-adjoint element $a$, regarded as a continuous map from $[0, 1]^d$ to $\mathrm{M}_n(\Comp)^{s.a.}$, can be approximated by continuous maps $b$ from $[0, 1]^d$ to $\mathrm{M}_n(\Comp)^{s.a.} \setminus V_{n, k}$, and so at most $k$ of the eigenvalues of $b(x)$ can be zero at every $x \in [0, 1]^d$, and hence there is $\delta_x>0$ such that at most $k$ of the eigenvalues of $b(x)$ are in the $\delta_x$-neighbourhood of $0$. Then, by  the continuity of the function $b$, a compactness argument shows that there is $\delta>0$ such that for every $x\in [0, 1]^d$, there at most $k$ of the eigenvalues of $b(x)$ are in the $\delta$-neighbourhood of $0$, and hence
$$ \tau(\chi_{\delta/2}(b)) < \frac{k}{n} < \eps,\quad \tau \in \tr(A_s).$$
So the C*-algebra $A$ has Property (S).

\end{example}

\section{An existence property}

The following definition is an existence property for a pair of C*-algebras $D \subseteq A$.

\begin{defn}\label{defn-D}
Let $A$ be a unital C*-algebra and let $D$ be a unital commutative subalgebra. 

The pair $(D, A)$ is said to have Property (E) if for any positive contraction $a \in A$, its the trace spectral distribution can be approximated by the trace spectral distribution of the positive contractions of $D$ in the sense that for any finite subset $\mathcal F \subseteq \mathrm{C}([0, 1])$, and any $\eps>0$, there is a positive contraction $b \in D$ such that
$$ \abs{ \tau(f(a)) - \tau(f(b))} < \eps,\quad f \in \mathcal F,\ \tau \in \tr(A).$$
\end{defn}

This property holds for all AH algebras with diagonal maps and all crossed products $\mathrm{C}(X) \rtimes \Gamma$ where $(X, \Gamma)$ is a minimal and free dynamical system with the (URP). Recall the following theorem which is due to Thomsen and Li (\cite{Thomsen-AI-Tr} and \cite{Li-interval}).
\begin{thm}\label{Thomsen-Li}
Suppose that $X$ is a path connected metrizable compact space. For any finite subset $\mathcal F \subseteq \mathrm{Aff}\mathrm{T}(\mathrm{C}(X))$ and $\eps>0$, there is $N > 0$ with the following property:

For any unital positive linear map $\xi: \mathrm{Aff}\mathrm{T}(\mathrm{C}(X)) \to \mathrm{Aff}\mathrm{T}(\mathrm{C}(Y))$, where $Y$ is an arbitrary metrizable compact space, and any $n\geq N$, there are homomorphisms 
$$ \phi_1, ..., \phi_n: \mathrm{C}(X) \to  \mathrm{C}(Y) $$
such that
$$\abs{\xi(f)(\tau) -  \frac{1}{n} \sum_{i=1}^n \tau(\phi_i(f)) } < \eps,\quad f \in \mathcal F,\ \tau \in \mathrm{T}(\mathrm{C}(Y)). $$
\end{thm}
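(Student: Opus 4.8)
The plan is to make every object concrete and reduce the statement to an elementary continuous‑approximation problem, the path‑connectedness hypothesis on $X$ entering only at the final construction. Since $X$ and $Y$ are compact metrizable, $\mathrm T(\mathrm C(X))$ is the Bauer simplex $M_1(X)$ of Borel probability measures on $X$, and evaluation at point masses gives the standard identification $\mathrm{Aff}\mathrm T(\mathrm C(X)) \cong \mathrm C(X,\Real)$, $g \mapsto (\mu \mapsto \int_X g\,d\mu)$; similarly for $Y$. Under these identifications a unital positive linear map $\xi$ is the same thing as a weak$^*$‑continuous field $y \mapsto \mu_y \in M_1(X)$ (with $\mu_y(g) = \xi(g)(\delta_y)$), a homomorphism $\phi_i : \mathrm C(X) \to \mathrm C(Y)$ is the same thing as a continuous map $\lambda_i : Y \to X$ (with $\phi_i(f) = g_f \circ \lambda_i$ for the function $g_f$ corresponding to $f$), and hence $\tau(\phi_i(f)) = \int_Y g_f(\lambda_i(y))\,d\tau(y)$. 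The quantity $\xi(f)(\tau) - \tfrac1n\sum_i \tau(\phi_i(f))$ is affine and weak$^*$‑continuous in $\tau$, so it suffices to bound it at the point evaluations $\tau = \delta_y$, $y \in Y$. Thus the theorem reduces to: given $\mathcal F$ (equivalently $g_1,\dots,g_k \in \mathrm C(X,\Real)$) and $\eps > 0$, find $N$ depending only on $X,\mathcal F,\eps$ such that for every $n \ge N$ and every weak$^*$‑continuous $y \mapsto \mu_y$ there exist continuous $\lambda_1,\dots,\lambda_n : Y \to X$ with $\sup_{y\in Y} \max_j \bigl| \int_X g_j\,d\mu_y - \tfrac1n\sum_{i=1}^n g_j(\lambda_i(y)) \bigr| < \eps$.

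First I would discretize the target. By uniform continuity of the finitely many $g_j$, fix a finite open cover $U_1,\dots,U_m$ of $X$ by sets on each of which every $g_j$ varies by less than $\eps/3$ (this fixes $m$ in terms of $X,\mathcal F,\eps$), choose points $p_l \in U_l$, and a partition of unity $(h_l)$ subordinate to $(U_l)$. Set $c_l(y) := \xi(h_l)(\delta_y) = \int_X h_l\,d\mu_y$; these are continuous nonnegative functions on $Y$ with $\sum_l c_l \equiv 1$, and for every $y$ and $j$ one has $\bigl| \int_X g_j\,d\mu_y - \sum_l c_l(y) g_j(p_l) \bigr| \le \eps/3$. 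So it remains to realize the weighted ``vertex averages'' $\sum_l c_l(y)g_j(p_l)$, up to an error tending to $0$ with $n$ uniformly in $y$, by uniform averages of the $p_l$ in which the assignment of which of the $n$ samples sits at which $p_l$ varies continuously with $y$.

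For the construction I would use path‑connectedness of $X$ to fix a \emph{skeleton path} $B:[1,m]\to X$ with $B(l)=p_l$ and each $B|_{[l,l+1]}$ a path from $p_l$ to $p_{l+1}$. Let $s_l(y)=c_1(y)+\cdots+c_l(y)$, a continuous nondecreasing staircase on $[0,1]$ with $s_0\equiv0$ and $s_m\equiv1$, and let $w>0$ be a small width parameter. Define a continuous nondecreasing profile $\Pi_y:[0,1]\to[1,m]$ by $\Pi_y(t)=1+\sum_{l=1}^{m-1}\beta\bigl((t-s_l(y))/w\bigr)$, where $\beta:\Real\to[0,1]$ is a fixed continuous nondecreasing function equal to $0$ on $(-\infty,-\tfrac12]$ and to $1$ on $[\tfrac12,\infty)$. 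Then $\Pi_y$ is constantly equal to $l$ on the ``flat'' interval $I_l(y):=[s_{l-1}(y)+\tfrac w2,\,s_l(y)-\tfrac w2]$ (empty when $c_l(y)\le w$), and $\Pi_y$ is non‑constant only on the union of the ``transition'' intervals $(s_l(y)-\tfrac w2,\,s_l(y)+\tfrac w2)$, $l=1,\dots,m-1$, a set of total length at most $(m-1)w$. Now set $\lambda_i(y):=B\bigl(\Pi_y((i-\tfrac12)/n)\bigr)$; this is continuous in $y$, so $\phi_i := (\,\cdot\,)\circ\lambda_i$ are homomorphisms $\mathrm C(X)\to\mathrm C(Y)$.

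For the estimate, split $\tfrac1n\sum_i g_j(\lambda_i(y))$ according to whether the sample point $t_i=(i-\tfrac12)/n$ lies in a flat interval or a transition interval. There are at most $m-1$ transition intervals, each of length $\le w$, so at most $(m-1)(wn+1)$ of the $t_i$ fall in them; for those $i$ the point $\lambda_i(y)$ is somewhere on the skeleton and contributes at most $\|g_j\|_\infty(m-1)(w+\tfrac1n)$ to the average. On $I_l(y)$ every $\lambda_i(y)$ is exactly $p_l$, and the number of $t_i$ in $I_l(y)$ differs from $n\,c_l(y)$ by at most $wn+1$, so the flat part of the average is within $m\max_j\|g_j\|_\infty(w+\tfrac1n)$ of $\sum_l c_l(y)g_j(p_l)$. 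Combining, the total error is at most $\eps/3 + 2m\max_j\|g_j\|_\infty(w+\tfrac1n)$; choosing first $w$ and then $N$ as a function of $m$ and $\max_j\|g_j\|_\infty$ — hence of $X,\mathcal F,\eps$ only — makes this less than $\eps$ for all $n\ge N$. The one genuine obstacle, and the sole place where path‑connectedness is invoked, is that the integers ``number of samples at $p_l$'' must vary continuously with $y$; sending the excess samples along the fixed skeleton path $B$ while keeping the number of in‑transit samples a small fraction of $n$ (uniformly in $y$, by shrinking the transition windows) is precisely what makes the selection continuous without spoiling the estimate.
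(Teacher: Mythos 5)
The paper does not give a proof of this theorem; it is recalled from Thomsen and Li and cited without argument. What you have produced is a self-contained proof, and it is correct. Your reduction to point evaluations (using the Kadison duality $\mathrm{Aff}\,\mathrm{T}(\mathrm C(X))\cong\mathrm C(X,\Real)$, the fact that a unital positive linear map $\xi$ dualizes to a weak$^*$-continuous affine field $y\mapsto\mu_y\in M_1(X)$, and the fact that an affine weak$^*$-continuous error function on the simplex $\mathrm{T}(\mathrm C(Y))$ is controlled by its values at the extreme points $\delta_y$) is clean, and the discretization via a fine cover and partition of unity is standard. The genuinely nontrivial step is realizing the continuously varying vertex weights $c_l(y)$ by a uniform average of $n$ continuous selections, and your ``skeleton path plus transition windows'' device handles it: the arithmetic ($N_l$ differs from $nc_l(y)$ by at most $nw+1$, at most $m(nw+1)$ samples are in transit, so the total error is $\eps/3 + 2m(w+\tfrac1n)\max_j\|g_j\|_\infty$) is sound, and $N$ indeed depends only on $X,\mathcal F,\eps$ through $m$ and $\max_j\|g_j\|_\infty$.

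On comparison with the cited sources: Thomsen treats the case $X=[0,1]$ by a quantile/monotone-rearrangement argument special to the interval, and Li extends this to general path-connected $X$ by a gluing argument that also exploits connectedness to move ``sample points'' continuously. Your construction is in the same spirit as Li's but more explicit — the fixed bump $\beta$ and the profile $\Pi_y$ built from the partial sums $s_l(y)$ isolate the role of path-connectedness to a single place (the existence of the skeleton path $B$) and make the quantitative dependence of $N$ on the data transparent. This is a legitimate alternative exposition rather than a different theorem. Two small things worth saying explicitly if this were written in full: (i) $\Pi_y(t)$ is jointly continuous in $(t,y)$ (so each $\lambda_i$ is continuous and the $\phi_i=(\cdot)\circ\lambda_i$ are unital $*$-homomorphisms), and (ii) the count of samples outside $\bigcup_l I_l(y)$ should be bounded directly by $n-\sum_l N_l\le nmw+m$ rather than transition-interval by transition-interval, since when some $c_l(y)\le w$ the transition windows overlap or degenerate and a per-window count is not quite the right bookkeeping — though this only improves the constant and does not affect the conclusion.
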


With Property (URP), the crossed product C*-algebra $\mathrm{C}(X) \rtimes \Gamma$ has the following tracial approximation property:
\begin{lem}[cf.~Theorem 3.9 of \cite{Niu-MD-Z}]\label{pre-approx}
Let $(X, \Gamma)$ be a free and minimal dynamical system with the (URP). Then, for any finite set $ \mathcal F \subseteq \mathrm{C}(X)$ and any $\eps>0$, there exist a positive element $p \in \mathrm{C}(X)$ with $\norm{p} = 1$ and a sub-C*-algebra $C \subseteq \mathrm{C}(X) \rtimes\Gamma$ such that
\begin{enumerate}
\item $ \norm{ [p, f] } < \eps$, $ f \in \mathcal F$,
\item $pfp \in_\eps C $, $f \in \mathcal F$, 
\item $pdp \in C$, $d \in \mathrm{C}(X)$,  
\item $C \cong \bigoplus_{i=1}^S \mathrm{M}_{n_i}(\mathrm{C}_0(Z_i))$, where $Z_i \subseteq X$, $i=1, ..., S$, are mutually disjoint, and under this isomorphism, the elements $pdp$ are  diagonal elements of $C$ for all $d \in \mathrm{C}(X)$,
\item under the isomorphism above, all diagonal elements of $\bigoplus_{i=1}^S \mathrm{M}_{n_i}(\mathrm{C}_0(Z_i))$ are in $C \cap \mathrm{C}(X)$,
\item $\mathrm{d}_\tau(1-p) < \eps$, $\tau \in \tr(A)$, 
\item there is a closed subset $[Z_{i}] \subseteq Z_i$ for each $i=1, ..., S$ such that if $a \in C$, $\norm{a} \leq 1$, and $a$ is supported in $\bigsqcup_{i=1}^S (Z_i \setminus [Z_{i}])$ under the isomorphism $C \cong \bigoplus_{i=1}^S\mathrm{M}_{n_i}(\mathrm{C}_0(Z_i))$, then $$ \tau(a) < \eps, \quad \tau \in \tr(A),$$
\item for any $d \in \mathrm{C}(X)$, there are $d_1, d_2 \in \mathrm{C}(X)$ such that  $$d = d_1 + d_2,\quad \norm{d_1}_{2, \tr(A)} < \eps,\quad d_2 \in C \cap \mathrm{C}(X), $$ and the support of $d_2$ is inside $\bigsqcup_{i=1}^S [Z_i]$ under the isomorphism $C \cong \bigoplus_{i=1}^S \mathrm{M}_{n_i}(\mathrm{C}_0(Z_i))$.
\end{enumerate}
\end{lem}
\begin{proof}
This follows from the same argument as Theorem 3.9 of \cite{Niu-MD-Z} (or, actually a simpler one).
\end{proof}

\begin{thm}\label{Property-D}
Let $A$ be a simple AH algebra with diagonal maps, or let $A = \mathrm{C}(X) \rtimes \Gamma$, where $(X, \Gamma)$ is a free and minimal dynamical system with the (URP). Then the pair $(D, A)$ has Property (E).
\end{thm}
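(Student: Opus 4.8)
The plan is to reduce, in both cases, to a local question inside a single matrix block of a homogeneous C*-algebra, where it is solved simply by diagonalising. First, some reductions: since $\tau(f(a))-\tau(f(b))=\tau\big((f-f(0))(a)\big)-\tau\big((f-f(0))(b)\big)$, we may assume $f(0)=0$ for every $f\in\mathcal F$, and, after rescaling, that each $f$ is a contraction in $\mathrm{C}([0,1])$; write $\omega$ for a common modulus of continuity for these functions.

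\emph{The AH case.} Write $A=\varinjlim A_i$, $A_i=\bigoplus_j\mathrm{M}_{n_{i,j}}(\mathrm{C}(X_{i,j}))$, and $D=\varinjlim D_i$ with $D_i$ the diagonal subalgebra. Pick a positive contraction $a_0$ in some $A_{i_0}$ with $\norm{a-a_0}$ small enough that $\abs{\tau(f(a))-\tau(f(a_0))}<\eps$ for all $f\in\mathcal F$, $\tau\in\tr(A)$. In each block, the ordered eigenvalue functions $\lambda_1\le\cdots\le\lambda_{n_{i_0,j}}$ of $a_0$ are continuous (min--max), so $b_0:=\bigoplus_j\mathrm{diag}(\lambda_1,\dots,\lambda_{n_{i_0,j}})\in D_{i_0}$ is a positive contraction with $\mathrm{Tr}(f(b_0(y)))=\mathrm{Tr}(f(a_0(y)))$ at every point; hence $\sigma(f(b_0))=\sigma(f(a_0))$ for all $\sigma\in\tr(A_{i_0})$, in particular for restrictions of traces of $A$, so the image $b$ of $b_0$ in $D$ works. (Theorem~\ref{Thomsen-Li} gives an alternative ``soft'' route to this local fact, applied with $X=[0,1]$ to the unital positive linear map $f\mapsto\big(y\mapsto\tfrac1{n_{i_0,j}}\mathrm{Tr}(f(a_0(y)))\big)$ — linear in $f$ as a convex combination of evaluations in the eigenvalues, after the usual identification $\mathrm{Aff}\mathrm{T}(\mathrm{C}(Y))=\mathrm{C}(Y)_{\mathrm{sa}}$ — and then realising the output homomorphisms as a diagonal element after moving far enough along the diagonal connecting maps.)

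\emph{The crossed product case.} Here $D=\mathrm{C}(X)$. Fix a small $\eps'>0$, to be chosen at the end in terms of $\eps$ and $\omega$, and apply Lemma~\ref{pre-approx} with $\mathcal F'=\{a\}$ and $\eps'$: there result a positive contraction $p\in\mathrm{C}(X)$ with $\norm p=1$, and a subalgebra $C\cong\bigoplus_{i=1}^S\mathrm{M}_{n_i}(\mathrm{C}_0(Z_i))\subseteq A$, with $\norm{[p,a]}<\eps'$, $\mathrm{d}_\tau(1-p)<\eps'$ for all $\tau\in\tr(A)$, and $pap$ within $\eps'$ of a positive contraction $c\in C$. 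From $\norm{[p,a]}<\eps'$ we get $\norm{a-(pap+(1-p)a(1-p))}<2\eps'$; since $f(0)=0$ and $pap\perp(1-p)a(1-p)$, $f(a)$ agrees with $f(pap)+f\big((1-p)a(1-p)\big)$ up to $\omega(2\eps')$ in norm, while $\norm{f((1-p)a(1-p))}_{2,\tau}^2\le\mathrm{d}_\tau(1-p)<\eps'$ uniformly in $\tau$. Hence $\abs{\tau(f(a))-\tau(f(c))}\le\omega(2\eps')+\omega(\eps')+\sqrt{\eps'}$ for all $\tau\in\tr(A)$. Now diagonalise $c$ in each block: its ordered eigenvalue functions lie in $\mathrm{C}_0(Z_i)$ (being dominated by $\norm{c|_{Z_i}}\in\mathrm{C}_0(Z_i)$), so $b:=\bigoplus_i\mathrm{diag}(\lambda^{(i)}_1,\dots,\lambda^{(i)}_{n_i})$ is a diagonal element of $C$, hence — by Lemma~\ref{pre-approx}(5) — a positive contraction lying in $C\cap\mathrm{C}(X)\subseteq D$; and $\tau(f(b))=\tau(f(c))$ for all $\tau\in\tr(A)$ and $f\in\mathcal F$, since $f(b)$ and $f(c)$ have the same pointwise matrix trace in every block. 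Choosing $\eps'$ small enough at the outset then yields $\abs{\tau(f(a))-\tau(f(b))}<\eps$.

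The only genuinely delicate point is the crossed product case: one must know that the diagonal element manufactured inside the Rokhlin-tower algebra $C$ actually lies in the prescribed commutative subalgebra $\mathrm{C}(X)$ — precisely what clauses (4) and (5) of Lemma~\ref{pre-approx} are engineered to supply — and one must keep the discarded corner $(1-p)a(1-p)$ uniformly negligible in trace, for which $f(0)=0$ is combined with the uniform bound $\mathrm{d}_\tau(1-p)<\eps'$ of clause (6). Everything else is norm-perturbation bookkeeping together with the continuity of ordered eigenvalues of a continuous family of Hermitian matrices.
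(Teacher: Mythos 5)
Your AH case is correct, and in fact more elementary than the paper's, which invokes the Thomsen--Li theorem (Theorem~\ref{Thomsen-Li}) rather than simply diagonalising; the ordered eigenvalue functions give an exact equality of tracial data at each finite stage, with no approximation loss at all, so there is nothing further to do once $a$ has been perturbed into a finite stage.

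The crossed-product case, however, contains a genuine gap. In Lemma~\ref{pre-approx} the element $p$ is a positive contraction with $\norm{p}=1$, \emph{not} a projection, and both ingredients of your key estimate fail because of this. First, $\norm{a-(pap+(1-p)a(1-p))}$ is not controlled by $\norm{[p,a]}$: the difference is $pa(1-p)+(1-p)ap$, and already with $p=\tfrac12\cdot 1$ and $a=1$ (so $[p,a]=0$) it equals $\tfrac12\cdot 1$. Second, $pap$ and $(1-p)a(1-p)$ are not orthogonal (same example), so one cannot split $f(pap+(1-p)a(1-p))$ as $f(pap)+f((1-p)a(1-p))$. The operator-norm bound $\abs{\tau(f(a))-\tau(f(c))}\le\omega(2\eps')+\omega(\eps')+\sqrt{\eps'}$ is therefore unjustified. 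The repair is to trade operator-norm control of $a-c$ for $\norm{\cdot}_{2,\tr(A)}$-control: since $a-pap=(1-p)a+pa(1-p)$ and $\mathrm d_\tau(1-p)<\eps'$, one gets $\norm{(1-p)a}_{2,\tau}^2\le\tau(1-p)<\eps'$ and likewise $\norm{pa(1-p)}_{2,\tau}^2<\eps'$, so $\norm{a-c}_{2,\tr(A)}\le 2\sqrt{\eps'}+\norm{pap-c}$; one must then invoke the (standard, but it needs to be stated) uniform $\norm{\cdot}_2$-continuity of functional calculus: for fixed $f\in\mathrm C([0,1])$ and $\eps>0$ there is $\delta>0$ such that any positive contractions $x,y$ with $\norm{x-y}_{2,\tau}<\delta$ satisfy $\norm{f(x)-f(y)}_{2,\tau}<\eps$. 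This is exactly how the paper opens the crossed-product argument --- $\delta$ is chosen to realise that continuity, and all subsequent approximation of $a$ by $\tilde a\in C$ is carried out in $\norm{\cdot}_{2,\tr(A)}$. After that repair, your direct diagonalisation of $c$ inside $C$ (landing in $D$ via Lemma~\ref{pre-approx}(5)) is a valid and somewhat leaner alternative to the paper's route of restricting to the compact sets $[Z_i]$ and applying Thomsen--Li there.
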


\begin{proof}
Let $(\mathcal F, \eps)$ be given. Without loss of generality, one may assume that each element of $\mathcal F$ has norm $1$ and is real valued,  so $\mathcal F$ can be regarded as a subset of $\mathrm{Aff}(\tr(\mathrm{C}([0, 1])))$.  Applying the Thomsen-Li Theorem above to $(\mathcal F, \eps)$ (where $X = [0, 1]$), one obtains $N$.

Let us first consider the case of AH algebras. Let $a \in A$ be a positive element with norm $1$; to prove the theorem, without loss of generality, one may assume that $a \in \mathrm{M}_n(\mathrm{C}(X)) \subseteq A$ for some $n \geq N$. Consider the homomorphism $\phi: \mathrm{C}[0, 1] \to \mathrm{M}_n(\mathrm{C}(X))$ induced by $a$, and consider the induced unital positive linear map $\phi^*: \mathrm{Aff}( \tr(\mathrm{C}[0, 1])) \to \mathrm{Aff}(\tr(\mathrm{C}(X)))$. By the Thomsen-Li Theorem, there are continuous maps $\lambda_1, ..., \lambda_n: X \to [0, 1]$ such that 
$$ \abs{\phi^*(f)(\tau) - \frac{1}{n}(\tau(f \circ \lambda_1) + \cdots + \tau(f \circ \lambda_n))} < \eps,\quad f\in \mathcal F,\  \tau \in \tr(\mathrm{C}(X)).$$
Then, with $$b = \mathrm{diag}\{(\lambda_1)_*(\mathrm{id}) , ..., (\lambda_n)_*(\mathrm{id})\} \in D_n,$$
(and since $\phi(f) = f(a)$,) one has 
$$ \abs{\tau(f(a)) - \tau(f(b))} < \eps,\quad  f\in \mathcal F,\ \tau \in \tr(\mathrm{M}_n(\mathrm{C}(X))).$$

Let us now consider the case that $A = \mathrm{C}(X) \rtimes \Gamma$, where $(X, \Gamma)$ is free and minimal, and has the (URP). 

Let $a \in A$ be a positive element with norm $1$. Choose $\delta>0$ such that if $\norm{x - y}_{2, \tr(A)} < \delta$, where $x, y$ are positive contractions, then $\norm{f(x) - f(y)}_{2, \tr(A)} < \eps$  for all $f \in \mathcal F$.

By Lemma \ref{pre-approx}, there exist a sub-C*-algebra $C \subseteq A$ and a positive contraction $p \in C$ such that
\begin{enumerate}
\item $C \cong \bigoplus_{i=1}^S \mathrm{M}_{n_i}(\mathrm{C}_0(Z_i))$, where $n_i>N$, $i=1, ..., S$,
\item $pap \in_{\delta/2} C$,
\item $\mathrm{d}_\tau(1-p) < \delta/2$, $\tau \in \tr(A)$,
\item\label{E-condition-4} there is a closed subset $[Z_{i}] \subseteq Z_i$ for each $i=1, ..., S$ such that if $a \in C$, $\norm{a} \leq 1$, and $a$ is supported in $\bigsqcup_{i=1}^S (Z_i \setminus [Z_{i}])$ under the isomorphism $C \cong \bigoplus_{i=1}^S\mathrm{M}_{n_i}(\mathrm{C}_0(Z_i))$, then $$ \tau(a) < \eps, \quad \tau \in \tr(A).$$
\end{enumerate}
Choose $\tilde{a} \in C$ such that $\norm{\tilde{a} - pap} < \delta/2$; hence
$$ \norm{a - \tilde{a}}_{2, \tr(A)} < \delta. $$ 
and so, by the choice of $\delta$, 
\begin{equation}\label{E-equation-1}
 \norm{f(a) - f(\tilde{a})}_{2, \tr(A)} < \eps,\quad f \in \mathcal F. 
 \end{equation}
Consider the homomorphism
$$ \phi: \mathrm{C}([0, 1]) \ni f \mapsto \pi_{[Z]}(f(\tilde{a})) \in \pi_{[Z]}(C) \cong  \bigoplus_{i=1}^S \mathrm{M}_{n_i}(\mathrm{C}([Z_i])),$$
where $[Z] = \bigsqcup_{i=1}^S [Z_i]$, and consider the unital positive linear map
$\phi^*:  \mathrm{Aff}( \tr(\mathrm{C}[0, 1])) \to \mathrm{Aff}(\tr(\mathrm{C}([Z]))).$ Then, by Theorem \ref{Thomsen-Li}, there are continuous maps $\lambda_{i, 1}, ..., \lambda_{i, n_i}: [Z_i] \to [0, 1]$, $i=1, ..., S$, such that
$$\abs{\phi^*(f)(\tau) - \frac{1}{n_i}(\tau(f \circ \lambda_{i, 1}) + \cdots + \tau(f \circ \lambda_{i, n_i}) )} < \eps,\quad f \in \mathcal F,\ \tau \in \tr(\mathrm{C}([Z_i])).$$
Define $$b_i = \mathrm{diag}\{ \mathrm{id} \circ \lambda_{i, 1}, ...,  \mathrm{id} \circ \lambda_{i, n} \} \in \mathrm{M}_{n_i}(\mathrm{C}([Z_i])),$$
where $\mathrm{id}$ is the identity function on $[0, 1]$,
and define 
$$b = b_1 \oplus \cdots \oplus b_{n_S}. $$ 
Then
\begin{equation}\label{E-equation-2}
\abs{\tau(\pi_{Z_0}(f(\tilde{a}))) - \tau(f(b))} < \eps,\quad f \in \mathcal F,\   \tau \in \tr(\bigoplus_{i=1}^S\mathrm{M}_{n_i}(\mathrm{C}([Z_i]))).
\end{equation}
Note that $b$ is a diagonal element. Extend $b$ to a positive diagonal contraction $\hat{b} \in C$. Note that, by Lemma \ref{pre-approx}(5), the element $\hat{b}$ is also in $D$. 
By \eqref{E-condition-4}, a calculation shows 
\begin{equation}\label{E-equation-3}
 \abs{\tau(f(\hat{b})) - \tau|_{[Z]}(f(b))} < 4 \eps,\quad f\in \mathcal F,\  \tau \in \tr(A),
 \end{equation}
where $\tau|_{[Z]}$ is the tracial state of $\bigoplus_{i=1}^S \mathrm{M}_{n_i}(\mathrm{C}([Z_i]))$ 
induced by $\tau|_C$:
$$\tau|_{[Z]}(x) = \lim_{n \to\infty}\frac{1}{\tau(e^2_n)} \tau(e_n \tilde{x} e_n), 
$$
where $x \in \bigoplus_{i=1}^S \mathrm{M}_{n_i}(\mathrm{C}([Z_i]))$, $\tilde{x} \in C \cong  \bigoplus_{i=1}^S \mathrm{M}_{n_i}(\mathrm{C}_0(Z_i))$ is an extension of $x$, and $(e_n)$ is a decreasing sequence of positive contractions of $C$ which converges (pointwise) to $\mathbf{1}_{Z}$. Also note that, by \eqref{E-condition-4},
\begin{equation}\label{E-equation-4}
\abs{\tau(f(\tilde{a})) - \tau|_{[Z]}(\pi_{[Z]}(f(\tilde{a})))} < 2\eps,\quad f \in \mathcal F,\  \tau \in \tr(A).
\end{equation}
Then, for all $f \in \mathcal F$ and $\tau \in \tr(A)$, by \eqref{E-equation-1}, \eqref{E-equation-4}, \eqref{E-equation-2}, and \eqref{E-equation-3}, 
$$ \tau(f(a)) \approx_\eps \tau(f(\tilde{a})) \approx_{2\eps} \tau|_{[Z]}(\pi_{Z_0}(f(\tilde{a}))) \approx_{\eps} \tau|_{[Z]}(f(b)) \approx_{4\eps} \tau(f(\hat{b})),$$
as desired. 
\end{proof}

\section{Some approximation lemmas}
In the section, let us prepare some approximation lemmas for the next section.

\begin{lem}\label{tr-to-norm-red}
Let $X$ be a metrizable compact space, and let $\Delta$ be a compact set of probability Borel measures of $X$. Then, for any $\eps \in (0, 1)$, there is $\delta>0$ such that if $a_0, a_1, d: X \to [0, 1]$ are continuous functions satisfying
\begin{enumerate}
\item $a_0a_1 = a_1$,
\item $\norm{a_0 d - d}_{2, \Delta} < \delta$, and
\item $\norm{da_1 - a_1}_{2, \Delta} < \delta$,
\end{enumerate}
then there is a continuous function $\tilde{d}: X \to [0, 1]$ such that
\begin{enumerate}
\item $\norm{d - \tilde{d}}_{2, \Delta} < \eps$,
\item $\norm{a_0 \tilde{d} - \tilde{d}} < \eps$, and
\item $\norm{\tilde{d} a_1 - a_1} < \eps$.
\end{enumerate}
\end{lem}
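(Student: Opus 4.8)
My plan is to exploit that all the functions involved are scalar-valued and to read the three hypotheses as $L^2$-estimates: $\norm{a_0d-d}_{2,\Delta}<\delta$ says $\int d^2(1-a_0)^2\,d\mu<\delta^2$ for every $\mu\in\Delta$, and $\norm{da_1-a_1}_{2,\Delta}<\delta$ says $\int a_1^2(1-d)^2\,d\mu<\delta^2$. The one structural fact I will use is that $a_0a_1=a_1$ together with $0\le a_0,a_1\le 1$ forces $a_0(x)=1$ wherever $a_1(x)>0$; so on the support of $a_1$ the function $a_0$ is identically $1$, and there is no genuine conflict between wanting $\tilde d$ small where $a_0$ is small and $\tilde d$ near $1$ where $a_1$ is large.

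Next I would write $\tilde d$ down explicitly. Fix $\eta\in(0,1)$ (eventually $\eta\le\eps/3$), and pick continuous $\rho_\eta,\lambda_\eta\colon[0,1]\to[0,1]$ with $\rho_\eta=0$ on $[0,1-2\eta]$, $\rho_\eta=1$ on $[1-\eta,1]$, and $\lambda_\eta=0$ on $[0,\eta]$, $\lambda_\eta=1$ on $[2\eta,1]$. Set
\[ \tilde d \;=\; \max\{\,\lambda_\eta(a_1),\ \rho_\eta(a_0)\,d\,\}. \]
This is continuous and $[0,1]$-valued, and conditions (2) and (3) for $\tilde d$ hold regardless of $\delta$: if $a_0(x)<1-2\eta$ then $\rho_\eta(a_0(x))=0$, and since $a_0(x)\ne 1$ forces $a_1(x)=0$ we also get $\lambda_\eta(a_1(x))=0$, so $\tilde d(x)=0$; while if $a_0(x)\ge 1-2\eta$ then $\tilde d(x)(1-a_0(x))\le 2\eta$. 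Hence $\norm{a_0\tilde d-\tilde d}\le 2\eta$. Symmetrically, if $a_1(x)\ge 2\eta$ then $\lambda_\eta(a_1(x))=1$ so $\tilde d(x)=1$, and if $a_1(x)<2\eta$ then $a_1(x)(1-\tilde d(x))\le 2\eta$; hence $\norm{\tilde d a_1-a_1}\le 2\eta$. Taking $\eta\le\eps/3$ secures (2) and (3).

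It remains to get (1), which is where $\delta$ comes in. Using $\max\{A,B\}-B=(A-B)_+$ I would estimate $|d-\tilde d|\le d\,(1-\rho_\eta(a_0))+(\lambda_\eta(a_1)-\rho_\eta(a_0)d)_+$. The first summand is supported where $a_0<1-\eta$, i.e.\ where $(1-a_0)^2>\eta^2$, so a Chebyshev-type bound gives $\int(d(1-\rho_\eta(a_0)))^2\,d\mu\le\eta^{-2}\int d^2(1-a_0)^2\,d\mu<\delta^2/\eta^2$. The second summand is nonzero only where $\lambda_\eta(a_1)>0$, hence where $a_1>\eta$; there $a_0=1$ so $\rho_\eta(a_0)=1$, and the summand is $(\lambda_\eta(a_1)-d)_+\le 1-d$, whence $\int(\lambda_\eta(a_1)-\rho_\eta(a_0)d)_+^2\,d\mu\le\int_{\{a_1>\eta\}}(1-d)^2\,d\mu\le\eta^{-2}\int a_1^2(1-d)^2\,d\mu<\delta^2/\eta^2$. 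Combining, $\norm{d-\tilde d}_{2,\Delta}<2\delta/\eta$, so choosing e.g.\ $\eta=\eps/3$ and $\delta=\eps^2/12$ yields all three conclusions, with $\delta$ depending only on $\eps$.

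The only real idea is the choice of $\tilde d$: it has to honestly satisfy the relations in sup-norm while remaining cheap to reach from $d$ in $L^2$, and the compatibility of these two demands is exactly what $a_0a_1=a_1$ provides. Once the formula is fixed, the rest is routine Chebyshev estimation, so I do not expect a serious obstacle; the one point that needs care is the behaviour at the ``seams'' $a_0\in[1-2\eta,1-\eta]$ and $a_1\in[\eta,2\eta]$, which is handled by the observations above about the supports of $\rho_\eta(a_0)$ and $\lambda_\eta(a_1)$ and the fact that $a_1>0\Rightarrow a_0=1$.
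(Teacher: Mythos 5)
Your proof is correct, and it takes a genuinely different route from the paper's. Both arguments rest on the same structural observation that $a_0 a_1 = a_1$ forces $a_0 = 1$ on $\{a_1 > 0\}$, and both invoke Chebyshev estimation to convert the $L^2$-smallness of $d(1-a_0)$ and $a_1(1-d)$ into control on the set where $d$ fails to be near $0$ (resp.\ near $1$). Where you differ is in the construction of $\tilde d$: the paper locates two disjoint closed sets $W_0\subseteq\{a_0\le 1-\eps'\}$ and $W_1\subseteq\{a_1\ge\eps'\}$ of small $\mu$-measure, engulfs them in disjoint open sets $U_0, U_1$, and then patches $\tilde d$ together with bump functions, forcing $\tilde d=0$ on $W_0$, $\tilde d=1$ on $W_1$, and $\tilde d=d$ off $U_0\cup U_1$. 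Your $\tilde d = \max\{\lambda_\eta(a_1),\,\rho_\eta(a_0)\,d\}$ does the same job in one closed formula. Two consequences favour your version: the sup-norm relations (2) and (3) come out automatic and quantitatively clean ($\le 2\eta$ each), independently of $\delta$, whereas the paper has to manage these through the support conditions on its bump functions; and your $L^2$-estimate is pointwise in $\mu$, so you never use compactness of $\Delta$ (the paper invokes it to choose $U_0, U_1$ uniformly small over $\Delta$). The price is small: verifying that $\max\{A,B\}=B+(A-B)_+$ splits $|d-\tilde d|$ as you claim, and that on $\{\lambda_\eta(a_1)>0\}$ one has $\rho_\eta(a_0)=1$, which you do correctly. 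Your choice $\eta=\eps/3$, $\delta=\eps^2/12$ works; in fact the triangle inequality gives $\|d-\tilde d\|_{2,\Delta}\le 2\delta/\eta$, and any $\delta<\eps\eta/2$ suffices.
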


\begin{proof}
With the given $\eps$, choose $\eps' \in (0, 1)$ such that $\eps' < \eps$ and $2\sqrt{\eps'}< \eps$.
Then $$\delta= \eps'/\sqrt{2/(\eps')^3} 
$$
has the property of the lemma.

Indeed, let $a_0, a_1, d$ satisfy the conditions of the statement. Define sets 
$$A_{0, \leq 1-\eps'} = a_0^{-1}([0, 1-\eps']) \quad \mathrm{and}\quad A_{1, \geq \eps'} = a_1^{-1}([\eps', 1]).$$ Note that $A_{0, \leq 1-\eps'} \cap A_{1, \geq \eps'} = \O$.

On $A_{0, \leq 1-\eps'}$, consider the set
$$W_0 = \{x \in A_{0, \leq 1-\eps'}: \abs{d(x)}^2 \geq \eps'\}.$$ Then, for any $\mu \in \Delta$,
$$
(\eps')^2 \eps \mu(W_0) \leq (\eps')^2 \int_{A_{0, \leq 1-\eps'}} d^2(x) \mathrm{d}\mu(x) \leq \int_{A_{0, \leq 1-\eps'}} (a_0(x) - 1)^2d^2(x) \mathrm{d}\mu(x) < \delta^2,
$$
and hence
$$\mu(W_0) < \delta^2/(\eps')^3.$$

On $A_{1, \geq \eps'}$, consider the set
$$W_1 = \{x \in A_{1, \geq \eps'}: \abs{d(x) - 1}^2 \geq \eps'\}.$$ Then, for any $\mu \in \Delta$,
$$
(\eps')^2(\eps' \mu(W_1)) \leq (\eps')^2 \int_{A_{1, \geq \eps'}}\abs{d(x) - 1}^2 \mathrm{d}\mu(x) \leq \int_{A_{1, \geq \eps'}}\abs{d(x) - 1}^2 a_1^2(x) \mathrm{d}\mu(x) < \delta^2,
$$
and hence
$$\mu(W_1) < \delta^2/(\eps')^3.$$

Choose disjoint open sets $U_0 \supseteq W_0$ and $U_1 \supseteq$. Since $\Delta$ is compact, $U_0$ and $U_1$ can be chosen such that
$$ \mu(U_0) < \delta^2/(\eps')^3 \quad \mathrm{and} \quad \mu(U_1) < \delta^2/(\eps')^3,\quad \mu \in \Delta.$$ Choose continuous functions $r_0, r_1: X \to [0, 1]$ such that
$$r_i|_{W_i} = 1 \quad \mathrm{and} \quad r_i|_{X \setminus U_i} = 0,\quad i=0, 1.$$

Define 
$$
\tilde{d}(x) = \left\{
\begin{array}{ll}
0, & x \in W_0,\\
(1-r_0(x))d(x), & x \in U_0, \\
d(x), & x \in X \setminus (U_0 \cup U_1), \\
(1-r_1(x))d(x) + r_1(x), & x \in U_1, \\
1, & x \in W_1.
\end{array}
\right. 
$$

Then 
$$\int_X(d(x) - \tilde{d}(x))^2 \mathrm{d}\mu(x) \leq \mu(U_0) + \mu(U_1) < 2\delta^2/(\eps')^3,\quad \mu \in \Delta.$$ That is,
$$ \norm{d - \tilde{d}}_{2, \Delta} < \delta \sqrt{2/(\eps')^3} = \eps' < \eps.$$

Note that
$$\tilde{d}(x) < \sqrt{\eps'},\quad x \in A_{0, \leq 1-\eps'}.$$
So
$$\norm{a_0 \tilde{d} - \tilde{d}} < \max\{\eps', 2\sqrt{\eps'}\} < 2\sqrt{\eps'} < \eps.$$

Also note that
$$1-\tilde{d}(x) < \sqrt{\eps'}, \quad x \in A_{1, \geq \eps'}.$$
So
$$\norm{\tilde{d} a_1 - a_1} < \max\{\sqrt{\eps'}, 2\eps'\} < 2\sqrt{\eps'} < \eps,$$
as desired.
\end{proof}

The following lemma certainly is well known:
\begin{lem}\label{pert-almost-ortho}
Let $A$ be a C*-algebra. Let $N \in \mathbb N$ and $\eps>0$. Then there is $\delta>0$ such that if $c_1, ..., c_N$ are self-adjoint contractions such that $$ \norm{c_i c_j} < \delta,\quad i, j =1, ..., N,\ i\neq j,$$
then
$$\norm{c_1 + \cdots + c_N} < \max\{\norm{c_i}: i=1, ..., N\} + \eps.$$
\end{lem}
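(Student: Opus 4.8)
The plan is a standard norm-of-powers estimate. Write $c := c_1 + \cdots + c_N$; since $c$ is self-adjoint, $\norm{c} = \norm{c^n}^{1/n}$ for every $n \in \mathbb N$ (iterating the C*-identity $\norm{x^*x} = \norm{x}^2$). I would expand
\[
c^n = \sum_{(i_1, \dots, i_n) \in \{1, \dots, N\}^n} c_{i_1} c_{i_2} \cdots c_{i_n},
\]
a sum of $N^n$ words, and split it into the $N$ \emph{constant} words $c_i^n$, each of norm at most $M^n$ where $M := \max_i \norm{c_i}$, and the remaining $N^n - N$ words. Any non-constant word contains a consecutive pair $c_{i_\ell} c_{i_{\ell+1}}$ with $i_\ell \neq i_{\ell+1}$; factoring the word through this pair and using that all the other factors are contractions, such a word has norm $< \delta$. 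Hence $\norm{c^n} \leq N M^n + N^n \delta$, and by the subadditivity of $t \mapsto t^{1/n}$ on $[0, \infty)$,
\[
\norm{c} \leq (N M^n)^{1/n} + (N^n \delta)^{1/n} = N^{1/n} M + N \delta^{1/n}.
\]

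The only point to get right — and the main (minor) obstacle — is the order of the quantifiers: $\delta$ must depend on $N$ and $\eps$ only, in particular not on $M$, so one cannot simply let $n \to \infty$, since then $N \delta^{1/n}$ tends to $N$ rather than to $0$. Instead I would first fix $n = n(N, \eps)$ with $N^{1/n} < 1 + \eps/2$, which is possible since $N^{1/n} \to 1$, and only afterwards set, say, $\delta := \tfrac12 (\eps / 2N)^n$, so that $N \delta^{1/n} < \eps/2$. Because $M \leq 1$, this yields $N^{1/n} M < (1 + \eps/2) M \leq M + \eps/2$, and therefore $\norm{c} < M + \eps$, as required; the degenerate cases $N = 1$ (hypothesis vacuous) and $M = 0$ are immediate from the same inequality.

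Everything beyond the quantifier bookkeeping is routine: submultiplicativity $\norm{xy} \leq \norm{x}\norm{y}$, the identity $\norm{c^n} = \norm{c}^n$ for self-adjoint $c$ (which for safety may be taken only along $n = 2^m$, coming straight from the C*-identity), and the elementary inequality $(a+b)^{1/n} \leq a^{1/n} + b^{1/n}$ for $a, b \geq 0$. So the substance of the proof is simply to make the chosen exponent $n$ absorb the combinatorial factor $N$ uniformly in $M$ before naming $\delta$.
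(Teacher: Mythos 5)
Your proof is correct but follows a genuinely different route from the paper's. The paper argues by perturbation: it invokes the standard fact that almost--orthogonal self-adjoint contractions can be perturbed, in a $\delta$-controlled way, to \emph{exactly} orthogonal self-adjoint elements $\tilde c_1, \dots, \tilde c_N$ with $\norm{\tilde c_i - c_i} < \eps/2N$, and then uses that for mutually orthogonal self-adjoint elements $\norm{\tilde c_1 + \cdots + \tilde c_N} = \max_i \norm{\tilde c_i}$, which lands within $\eps$ of $\max_i\norm{c_i}$. Your argument instead estimates $\norm{c^n}^{1/n}$ directly by expanding $c^n$ into $N^n$ words, bounding the $N$ constant words by $M^n$ and every non-constant word by $\delta$ via its first off-diagonal adjacent pair, and then choosing $n$ first (so that $N^{1/n} < 1 + \eps/2$, using $M \leq 1$) and $\delta$ afterward so the combinatorial error $N\delta^{1/n}$ is $< \eps/2$. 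The quantifier bookkeeping you flag is indeed the only subtlety, and you handle it correctly; the identity $\norm{c^n} = \norm{c}^n$ for self-adjoint $c$ holds for all $n$ (spectral radius), though restricting to $n = 2^m$ is a harmless safety net. What the paper's route buys is brevity, given the perturbation lemma as a black box; what yours buys is a self-contained, elementary estimate that needs no external input. Both are valid.
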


\begin{proof}
For the given $(N, \eps)$, there is $\delta>0$ such that if $c_1, ..., c_N$ are self-adjoint contractions such that $$ \norm{c_i c_j} < \delta,\quad i, j =1, ..., N,\ i\neq j,$$
then there are self-adjoint elements $\tilde{c}_1, ..., \tilde{c}_N \in A$ such that
$$ \norm{\tilde{c}_i - c_i} < \eps/2N, \quad \textrm{and} \quad \tilde{c}_i \perp \tilde{c}_j,\quad i, j = 1, ..., N,\  i\neq j.$$
Then, this $\delta$ has the property of the lemma, as
\begin{eqnarray*}
\norm{c_1 + \cdots + c_N} & \approx_{\eps/2} & \norm{\tilde{c}_1 + \cdots + \tilde{c}_N} \\
& = & \max\{\norm{\tilde{c}_i}: i=1, ..., N\} \\
& \approx_{\eps/2} & \max\{\norm{c_i}: i=1, ..., N\},
\end{eqnarray*}
as desired.
\end{proof}

\begin{lem}[cf.~\cite{CE-str1}]\label{hereditary-sets-norm-app}
Let $A$ be a C*-algebra. For any $\eps>0$, there are $N \in \mathbb N$ and $\delta>0$ with the following property:

Let $a \in A$ be a positive element with norm at most $1$. Define $$a_i = \chi_i(a),\quad  i=1, ..., N, $$ where $$\chi_i(t) = \left\{ 
\begin{array}{ll}
0, & t \leq \frac{i-1}{N}, \\
\mathrm{linear}, & t \in [\frac{i-1}{N}, \frac{i}{N}], \\
1, & t \geq \frac{i}{N}.
\end{array} 
\right.
$$
Assume there are positive elements $d_1, ..., d_N\in A$  with norm at most $1$ such that
         \begin{enumerate}
         \item\label{commut-cond} $[a, d_i] = 0$, $i=1, ..., N$,
         \item\label{here-cond-1-a} $ \norm{a_{i} d_{i+1} - d_{i+1}} < \delta$, $i=1, ..., N-1$,
         \item\label{here-cond-1-b} $ \norm{d_i a_{i+1} - a_{i+1}} < \delta$, $i=1, ..., N-1$.
         \end{enumerate} 
        Then
        $$\norm{ a  - \frac{1}{N}(d_1 + \cdots + d_N)} <  \eps.$$
\end{lem}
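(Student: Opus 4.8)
The plan is to exploit the fact that the $a_i$ and $d_i$ commute with $a$ (and hence generate a commutative C*-algebra together with $a$), so that the problem reduces to a scalar estimate on the joint spectrum, modulo the small errors in conditions \eqref{here-cond-1-a} and \eqref{here-cond-1-b}. First I would choose $N$ large, say $N > 4/\eps$, and $\delta$ small (to be pinned down via Lemma \ref{pert-almost-ortho} and the standard perturbation fact that elements nearly satisfying an approximate support relation can be perturbed to satisfy it exactly). The key observation is that the functions $\chi_i$ are nested: $a_1 \geq a_2 \geq \cdots \geq a_N$, and $a_i$ is $1$ on $\{a \geq i/N\}$ while vanishing on $\{a \leq (i-1)/N\}$. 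Condition \eqref{here-cond-1-a} says $d_{i+1}$ is (approximately) supported where $a_i = 1$, i.e. where $a \geq i/N$; condition \eqref{here-cond-1-b} says $a_{i+1}$ is (approximately) in the hereditary algebra generated by $d_i$. Combining these across all $i$, one gets that the $d_i$ are (approximately) nested in the same way as the $a_i$, and that $d_i$ is (approximately) supported on $\{(i-1)/N \leq a\}$ and $a_{i+1} \precsim d_i$.

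The main step is then a pointwise/functional-calculus estimate. Working first in the idealized case where all the $\delta$-errors are zero, everything lies in $C^*(a)\cong \mathrm{C}(\sigma(a))$ together with the extra commuting elements; but in fact one can argue purely in terms of $a$: from the exact relations $a_i d_{i+1} = d_{i+1}$ and $d_i a_{i+1} = a_{i+1}$ one deduces $a_{i+1} \leq d_i \leq a_{i-1}$ (using $0 \leq d_i \leq 1$ and the nesting of the $a_j$), so that at a point where $a = t$ with $(k-1)/N < t \leq k/N$ one has $d_i = 1$ for $i \leq k-1$ and $d_i = 0$ for $i \geq k+2$, while $d_k, d_{k+1} \in [0,1]$ are unconstrained. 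Hence
\[
\Bigl| t - \tfrac1N(d_1 + \cdots + d_N)\Bigr| \leq \Bigl| t - \tfrac{k-1}{N}\Bigr| + \tfrac2N \leq \tfrac3N < \eps,
\]
uniformly. This gives the conclusion when $\delta = 0$, and the value of $N$ depends only on $\eps$.

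To handle genuine $\delta > 0$, I would first apply the standard perturbation lemma: if $\norm{a_i d_{i+1} - d_{i+1}}$ and $\norm{d_i a_{i+1} - a_{i+1}}$ are small enough (controlled by a $\delta$ depending on $N$ and $\eps$), then there are positive contractions $d_i'$ with $\norm{d_i - d_i'}$ small, still commuting with $a$ (or close to commuting — one can symmetrize $d_i' = (a_{i-1}d_i a_{i-1})$-type cutdowns, or pass to $C^*(a, d_1, \dots, d_N)$ and use that it is close to a commutative algebra), and satisfying $a_{i-1}d_i' = d_i'$ and $d_i' a_{i+1} = a_{i+1}$ exactly. Applying the idealized estimate to the $d_i'$ and adding up the $\binom{N}{1}$-many perturbation errors $\frac1N\sum\norm{d_i - d_i'}$ (each $< \eps/4N$, say) finishes the bound. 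The place requiring the most care is making the perturbation step respect simultaneously the commutation with $a$ and both one-sided support relations; I expect to circumvent full commutativity by noting that only the crude inequality $a_{i+1} \lesssim d_i \lesssim a_{i-1}$ (up to $\eps$) is needed for the averaging estimate, and such two-sided approximate domination is stable under small norm perturbations without needing exact commutation. Lemma \ref{pert-almost-ortho} is the tool to control the norm of the sum when combining the "excess" pieces $d_k + d_{k+1}$ over the two overlapping levels.
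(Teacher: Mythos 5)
Your central idea---sandwiching each $d_i$ between $a_{i+1}$ and $a_{i-1}$ and then averaging---is sound, and when carried through correctly it actually gives a \emph{simpler} proof than the paper's, which instead telescopes $ad_1=\sum_i a(d_i-d_{i+1})+ad_N$ and controls the sum with the almost-orthogonality Lemma \ref{pert-almost-ortho}. The clean version of your idea goes as follows. Since $a_{i-1}$ and $d_i$ are commuting positive contractions, the element $(1-a_{i-1})d_i=d_i-a_{i-1}d_i$ is positive of norm $<\delta$, while $a_{i-1}d_i=a_{i-1}^{1/2}d_i\,a_{i-1}^{1/2}\leq a_{i-1}$; hence $d_i\leq a_{i-1}+\delta$. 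Symmetrically, from $\norm{d_ia_{i+1}-a_{i+1}}<\delta$ one gets $a_{i+1}-\delta\leq d_i$. Summing these operator inequalities over $i$ (with the conventions $a_0=1$, $a_{N+1}=0$) and using $\frac1N\sum_{i=1}^N a_i=a$ gives
\[
a-\tfrac1N a_1-\delta\ \leq\ \tfrac1N\sum_{i=1}^N d_i\ \leq\ a+\tfrac1N(1-a_N)+\delta,
\]
so $\norm{a-\frac1N\sum_i d_i}\leq \frac1N+\delta<\eps$ once $N>2/\eps$ and $\delta<\eps/2$. Note that only each pair $(a,d_i)$ needs to commute, and no almost-orthogonality lemma is required.

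As written, however, your proposal has two genuine gaps. First, the pointwise argument ``at a point where $a=t$'' tacitly treats $a,d_1,\dots,d_N$ as lying in a single commutative C*-algebra, but the hypothesis only gives $[a,d_i]=0$; the $d_i$ need not commute with one another, so there is no joint spectrum to evaluate at. (Your remark that $C^*(a,d_1,\dots,d_N)$ is ``close to a commutative algebra'' has no basis in the hypotheses.) The operator-inequality version above avoids this. Second, and more seriously, the perturbation step---replacing $d_i$ by $d_i'$ with $\norm{d_i-d_i'}$ small and satisfying $a_{i-1}d_i'=d_i'$ and $d_i'a_{i+1}=a_{i+1}$ \emph{exactly}---cannot be carried out: the relation $d_i'a_{i+1}=a_{i+1}$ forces $d_i'=1$ on the open support of $a_{i+1}$, but $\norm{d_ia_{i+1}-a_{i+1}}<\delta$ places essentially no restriction on $d_i$ near the boundary of that support, where $a_{i+1}$ is small. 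For instance, in $\mathrm{C}[0,1]$ with $a=\mathrm{id}$ one can take $d_i\equiv 0$ on a thin strip just to the right of $\{a=i/N\}$ while keeping $\norm{d_ia_{i+1}-a_{i+1}}$ as small as one likes; any $d_i'$ with $d_i'a_{i+1}=a_{i+1}$ then satisfies $\norm{d_i-d_i'}\geq 1$. Your closing instinct that ``only the crude inequality $a_{i+1}\lesssim d_i\lesssim a_{i-1}$ is needed'' is exactly right---but the way to use it is to derive the $\delta$-approximate operator inequalities directly, as above, rather than trying to eliminate the $\delta$. Also a small slip: in the idealized case the pinned $d_i$ are those with $i\leq k-2$ (value $1$) and $i\geq k+2$ (value $0$); $d_{k-1},d_k,d_{k+1}$ are all partially unconstrained, not just $d_k,d_{k+1}$.
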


\begin{proof}
Choose $N > {32}/{\eps}.$ Applying Lemma \ref{pert-almost-ortho} to $N$ and $\eps/16$, one obtains $\delta_1$.
Choose $\delta>0$ such that
$$16\delta< \delta_1 \quad \mathrm{and} \quad \frac{\eps}{8} + \delta + 4((4\delta+\frac{\eps}{8}) + \frac{\eps}{16}) < \eps. $$
 Then the pair $(N, \delta)$ has the property of the lemma.

Since $a_i a_j = a_j$, $i < j$, by \eqref{here-cond-1-a},
$$a_i d_j \approx_\delta a_i a_{j-1} d_j = a_{j-1} d_j \approx_\delta d_j,\quad i< j. $$ Hence, together with \eqref{here-cond-1-a},
\begin{equation}\label{here-cond-1-a-eq}
\norm{a_id_j - d_j} < 2\delta,\quad  i < j.
\end{equation}
A similar argument applied to \eqref{here-cond-1-b} shows
\begin{equation}\label{here-cond-1-b-eq}
\norm{d_ia_j - a_j} < 2\delta, \quad  i < j.
\end{equation}

Also note that, if $i < j-1$, then
$$d_i d_j \approx_\delta d_i a_{j-1} d_j \approx_{2\delta} a_{j-1} d_j \approx_\delta d_j,$$
and therefore, a straightforward calculation shows that 
\begin{equation}\label{orth-cond}
\norm{(d_i-d_{i+1})(d_j - d_{j+1})} <16\delta < \delta_1 ,\quad i < j-2.
\end{equation}

Note that $$a \approx_{\frac{2}{N}} aa_2,$$
and then, together with \eqref{here-cond-1-b}, one has
\begin{equation}\label{first-bump}
a \approx _{\frac{2}{N}} aa_2 \approx_{\delta } a a_2d_1 \approx_{\frac{2}{N}} ad_1.
\end{equation}
Noting that $$a = \frac{1}{N}(a_1 + \cdots + a_N),$$ together with \eqref{here-cond-1-a-eq} and  \eqref{here-cond-1-b-eq}, 
one also has that, for $i=1, ..., N-1$, 
\begin{eqnarray*}
& & a(d_i - d_{i+1})  \\
& = & \frac{1}{N}(a_1 + \cdots + a_N)(d_i - d_{i+1}) \nonumber \\
& = & \frac{1}{N}((a_1 d_i + \cdots + a_Nd_i) - (a_1 d_{i+1} + \cdots + a_Nd_{i+1})) \nonumber \\
& \approx_{4\delta} & \frac{1}{N}((\underbrace{d_i + \cdots + d_{i}}_{i-1} + a_id_i + a_{i+1} + \cdots + a_N) \\
&&  - (\underbrace{d_{i+1} + \cdots + d_{i+1}}_{i} +  a_{i+1}d_{i+1} + a_{i+2}+ \cdots + a_N)) \nonumber \\
& = & \frac{i}{N}(d_i - d_{i+1}) + \frac{1}{N}(a_{i+1}-d_{i} + a_id_i - a_{i+1}d_{i+1}) \nonumber \\
&\approx_{\frac{4}{N}}&  \frac{i}{N}(d_i - d_{i+1}). \nonumber
\end{eqnarray*}
That is, 
\begin{equation}\label{local-bump}
\norm{a(d_i - d_{i+1}) -  \frac{i}{N}(d_i - d_{i+1})} < 4\delta + \frac{4}{N} < 4\delta + \frac{\eps}{8},\quad i=1, 2, ..., N-1.
\end{equation}
A similar argument also shows
\begin{equation}\label{local-bump-1}
\norm{ad_N - d_N} < 2\delta + \frac{2}{N} < 4\delta + \frac{\eps}{8}.
\end{equation}

Then, on applying \eqref{first-bump}, \eqref{local-bump}, \eqref{local-bump-1}, \eqref{orth-cond}, and Lemma \ref{pert-almost-ortho} (note that $a, d_1, ..., d_N$ commute),
\begin{eqnarray*}
a & \approx_{\frac{4}{N} + \delta} & ad_1\\
& = & a(d_1 - d_2 + d_2 - d_3 + \cdots + d_{N-1} - d_{N} + d_{N}) \\
& = & a(d_1 - d_2) + a(d_2 - d_3) + \cdots + a(d_{N-1} - d_{N}) + ad_{N} \\
& \approx_{4((4\delta + \frac{\eps}{8}) + \frac{\eps}{16})} & \frac{1}{N}(d_1 - d_2) + \frac{2}{N}(d_2 - d_3)  + \cdots + \frac{N-1}{N}(d_{N-1} - d_{N}) + d_{N} \\
& = & \frac{1}{N}(d_1 + d_2 + \cdots + d_{N}),
\end{eqnarray*}
as desired.
\end{proof}

\begin{lem}\label{hereditary-sets-tr-app}
Let $(D, A)$ be a pair of unital C*-algebras, where $D$ is commutative. For any $\eps>0$, there are $\delta>0$ and $N \in \mathbb N$ with the following property:  

Let $a \in D \subseteq A$ be a positive element with norm at most $1$, and set $$a_i = \chi_i(a),\quad  i=1, ..., N, $$ where $$\chi_i(t) = \left\{ 
\begin{array}{ll}
0, & t \leq \frac{i-1}{N}, \\
\mathrm{linear}, & t \in [\frac{i-1}{N}, \frac{i}{N}], \\
1, & t \geq \frac{i}{N}.
\end{array} 
\right.
$$
Let $d_1, ..., d_N \in A$ be positive elements with norm at most $1$ such that
         \begin{enumerate}
         \item $\mathrm{dist}_{2, \tr(A)}(d_i, (D)^+_1) < \delta$, $i=1, 2, ..., N$,
         \item\label{here-cond-1-a-tr} $ \norm{a_{i} d_{i+1} - d_{i+1}}_{2, \tr(A)} < \delta$, $i=1, ..., N-1$, and 
         \item\label{here-cond-1-b-tr} $ \norm{d_i a_{i+1} - a_{i+1}}_{2, \tr(A)} < \delta$, $i=1, ..., N-1$.
         \end{enumerate} 
        Then
        $$\norm{ a  - \frac{1}{N}(d_1 + \cdots + d_N)}_{2, \tr(A)} < \eps.$$
\end{lem}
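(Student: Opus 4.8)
The plan is to reduce this tracial statement to the operator-norm version already proved in Lemma~\ref{hereditary-sets-norm-app}. The idea is that, although the $d_i$ live in $A$, hypothesis~(1) says they are $\norm{\cdot}_{2,\tr(A)}$-close to the commutative algebra $D$; once one has moved inside $D$, one can correct the approximants so that the ``hereditary'' relations hold in \emph{operator} norm rather than merely in $\norm{\cdot}_{2,\tr(A)}$, and then the hypotheses of Lemma~\ref{hereditary-sets-norm-app} become available. Write $D\cong\mathrm{C}(Y)$ and let $\Delta$ be the weak$^*$-compact set of Borel probability measures on $Y$ obtained by restricting the traces in $\tr(A)$ (it is compact, being the continuous image of the compact set $\tr(A)$); for $f\in D$ one has $\norm{f}_{2,\tr(A)}=\norm{f}_{2,\Delta}$, so Lemma~\ref{tr-to-norm-red} applies to elements of $D$.

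First I would fix the constants: apply Lemma~\ref{hereditary-sets-norm-app} with error $\eps/3$ to obtain $N$ and $\delta_0$; apply Lemma~\ref{tr-to-norm-red} (over $Y$, with the measures $\Delta$) with target $\min\{\delta_0,\eps/3\}$ to obtain $\delta_1$; then take this $N$ and any $\delta>0$ with $3\delta<\delta_1$ and $\delta<\eps/3$. Given $a$ and $d_1,\dots,d_N$ as in the statement, I would use hypothesis~(1) to choose $d_i'\in(D)^+_1$ with $\norm{d_i-d_i'}_{2,\tr(A)}<\delta$; since $\norm{a_j}\le1$, hypotheses~(2) and~(3) then give $\norm{a_{i-1}d_i'-d_i'}_{2,\tr(A)}<3\delta$ for $2\le i\le N$ and $\norm{d_i'a_{i+1}-a_{i+1}}_{2,\tr(A)}<2\delta$ for $1\le i\le N-1$. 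Next I would apply Lemma~\ref{tr-to-norm-red} once for each index $i$, working entirely inside $D$: for $2\le i\le N-1$ with $(a_0,a_1,d)=(a_{i-1},a_{i+1},d_i')$ (note $a_{i-1}a_{i+1}=a_{i+1}$), for $i=1$ with $(a_0,a_1,d)=(1_D,a_2,d_1')$, and for $i=N$ with $(a_0,a_1,d)=(a_{N-1},0,d_N')$. Since $3\delta<\delta_1$, this produces $\tilde d_i\in(D)^+_1$ with $\norm{\tilde d_i-d_i'}_{2,\Delta}<\eps/3$, with $\norm{a_{i-1}\tilde d_i-\tilde d_i}<\delta_0$ for $2\le i\le N$, and with $\norm{\tilde d_i a_{i+1}-a_{i+1}}<\delta_0$ for $1\le i\le N-1$.

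Then $a,\tilde d_1,\dots,\tilde d_N$ all lie in the commutative algebra $D$, so they commute and have norm at most $1$, and they verify the two norm-hereditary hypotheses of Lemma~\ref{hereditary-sets-norm-app} with constant $\delta_0$; that lemma yields $\norm{a-\tfrac1N(\tilde d_1+\cdots+\tilde d_N)}<\eps/3$. Finally, using $\norm{\cdot}_{2,\tr(A)}\le\norm{\cdot}$ together with the two rounds of perturbation, the triangle inequality gives $\norm{a-\tfrac1N\sum_i d_i}_{2,\tr(A)}<\eps/3+\eps/3+\delta<\eps$, as desired.

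I do not expect a genuine obstacle here: the one substantive point is the structural observation that commutativity of $D$ lets one upgrade $\norm{\cdot}_{2,\tr(A)}$-hereditary relations to norm-hereditary ones through Lemma~\ref{tr-to-norm-red}. What needs care is entirely bookkeeping — reconciling the index conventions ($a_{i-1}$ versus $a_i$) between the two lemmas, treating the boundary indices $i=1$ and $i=N$ (where one guard function is replaced by $1_D$ or $0$), and checking that each perturbation remains $[0,1]$-valued so that all the ``norm at most $1$'' hypotheses survive.
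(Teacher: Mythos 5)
Your proof is correct and follows essentially the same path as the paper's: invoke Lemma~\ref{hereditary-sets-norm-app} to fix $N$ and a norm-tolerance, use hypothesis~(1) to replace each $d_i$ by a nearby positive contraction in $D$, run Lemma~\ref{tr-to-norm-red} once per index to upgrade the $\norm{\cdot}_{2,\tr(A)}$-hereditary relations to operator-norm ones while staying in $D$, apply the norm lemma, and finish with the triangle inequality. The paper is more terse about the boundary indices $i=1,N$ and does not spell out the pairing $(a_0,a_1,d)=(a_{i-1},a_{i+1},d_i')$, so your explicit treatment of those edge cases (substituting $1_D$ and $0$) is a welcome bit of bookkeeping, but not a different argument; the only other divergences are the choice of error-budget fractions ($\eps/3$ versus the paper's $\eps/2$ and $\eps/4$), which are immaterial.
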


\begin{proof}
Applying Lemma \ref{hereditary-sets-norm-app} to $\eps/2$, one obtains, say, the pair $(N, \delta_2)$. Applying Lemma \ref{tr-to-norm-red} to $\min\{\delta_2, \eps/4\}$, one obtains $\delta_1$. Set $\delta = \min\{\delta_1/4, \eps/4\}$. Then $(N, \delta)$ has the property of the lemma.

Indeed, let $d_1, ..., d_N$ be given. Since $\mathrm{dist}_{2, \tr(A)}(d_i, (D)^+_1) < \delta$, $i=1, 2, ..., N$, there are positive contractions $\tilde{d}_1, ... \tilde{d}_N \in D$ such that
\begin{equation}\label{pert-lem-cond-1}
\norm{d_i - \tilde{d}_i}_{2, \tr(A)} < \min\{\delta_1/4, \eps/4\},\quad i=1, ..., N.
\end{equation}
Then, it follows from \eqref{here-cond-1-a-tr} and \eqref{here-cond-1-b-tr} that
$$  \norm{a_{i} \tilde{d}_{i+1} - \tilde{d}_{i+1}}_{2, \tr(A)} < \delta_1 \quad \mathrm{and} \quad \norm{\tilde{d}_i a_{i+1} - a_{i+1}}_{2, \tr(A)} < \delta_1,\quad i=1, 2, ..., N-1. $$ 
Applying Lemma \ref{tr-to-norm-red} to each element $\tilde{d}_i$, $i=1, ..., N$, one obtains a positive contraction $\tilde{\tilde{d}}_i \in D$ such that
\begin{equation}\label{pert-lem-cond-2}  
 \norm{\tilde{d}_i - \tilde{\tilde{d}}_i}_{2, \tr(A)} < \eps/4,\quad i=1, ..., N 
 \end{equation}
and
$$  \norm{a_{i} \tilde{\tilde{d}}_{i+1} - \tilde{\tilde{d}}_{i+1}} < \delta_2 \quad \mathrm{and} \quad \norm{\tilde{\tilde{d}}_i a_{i+1} - a_{i+1}} < \delta_2,\quad i=1, 2, ..., N-1. $$ 
Then, by Lemma \ref{hereditary-sets-norm-app}, one has
$$\norm{ a  - \frac{1}{N}(\tilde{\tilde{d}}_1 + \cdots + \tilde{\tilde{d}}_N)} <  \frac{\eps}{2},$$
and hence, together with \eqref{pert-lem-cond-1} and \eqref{pert-lem-cond-2}, one has
$$\norm{ a  - \frac{1}{N}(d_1 + \cdots + d_N)}_{2, \tr(A)} < \frac{\eps}{2} + \frac{\eps}{2} = \eps,$$
as desired.
\end{proof}

\begin{lem}\label{hereditary-sets-preserve-tr}
Let $A$ be a unital C*-algebra. For any $\eps>0$, any $N \in \mathbb N$, and any $\chi\in \mathrm{C}([0, 1])^+$, there are $\delta>0$ and $M \in \mathbb N$ such that if $b_1, b_2, ..., b_N \in A$ and $d_1, d_2, ..., d_N \in A$ are positive elements with norm at most $1$ such that
\begin{enumerate}
\item $ \norm{d_id_{i+1} - d_{i+1}}_{2, \tr(A)} < \delta$, $i=1, ..., N-1$,  
\item $b_i b_{i+1} = b_{i+1}$, $i=1, 2, ..., N$, and
\item $\abs{\tau(b_i^j) - \tau(d_i^j)} < \delta$, $i=1, 2, ..., N$, $j=1, ..., M$, $\tau \in \tr(A)$,
\end{enumerate}
then
$$\abs{\tau(\chi(\frac{1}{N}(b_1 + \cdots + b_N))) - \tau(\chi(\frac{1}{N}(d_1 + \cdots + d_N))) } < \eps,\quad \tau \in \tr(A).$$
\end{lem}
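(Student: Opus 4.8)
The plan is to reduce the assertion, by polynomial approximation, to a word-by-word comparison of traces of products of the $b_i$ with traces of products of the $d_i$, and then to use the exact relation $b_ib_{i+1}=b_{i+1}$ on the one side and the $2$-norm relations~(1) together with the moment hypothesis~(3) on the other. First I would invoke the Weierstrass theorem to choose a polynomial $P(t)=\sum_{k=0}^{K}\alpha_k t^k$ with $\sup_{t\in[0,1]}\abs{\chi(t)-P(t)}<\eps/3$. Since $\tfrac1N\sum_i b_i$ and $\tfrac1N\sum_i d_i$ are positive contractions, $\abs{\tau(\chi(x))-\tau(P(x))}<\eps/3$ for $x$ either average and every $\tau$, so it suffices to make $\abs{\tau(P(\tfrac1N\sum_i b_i))-\tau(P(\tfrac1N\sum_i d_i))}<\eps/3$. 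Expanding $P$ and then expanding each $k$-th power ($k\le K$) into its $N^k$ monomials, this follows once $\abs{\tau(b_{i_1}\cdots b_{i_k})-\tau(d_{i_1}\cdots d_{i_k})}$ is small, uniformly over $\tau\in\tr(A)$ and over words of length $k\le K$, with a tolerance of the shape $\eps/\big(3(1+\sum_k\abs{\alpha_k})\big)$. Accordingly I would take $M=K$.

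On the $b$-side the reduction is exact: from $b_ib_{i+1}=b_{i+1}$ and positivity an induction on $j-i$ gives $b_ib_j=b_j$ for all $i\le j$, whence $b_jb_i=(b_ib_j)^{*}=b_j$ as well, so the $b_i$ commute; grouping a word and absorbing every factor with a smaller index into the largest one yields $b_{i_1}\cdots b_{i_k}=b_v^{\,c}$, where $v=\max_\ell i_\ell$ and $c\in\{1,\dots,K\}$ is the number of indices $\ell$ with $i_\ell=v$, hence $\tau(b_{i_1}\cdots b_{i_k})=\tau(b_v^{\,c})$. On the $d$-side the same collapse holds only approximately, but in the uniform trace norm, which is enough since word lengths are bounded by $K$. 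Using the submultiplicativity $\norm{xyz-xy'z}_{2,\tr}\le\norm{x}\,\norm{y-y'}_{2,\tr}\,\norm{z}$ and the identity $\norm{x^{*}}_{2,\tau}=\norm{x}_{2,\tau}$ (traciality), hypothesis~(1) propagates to $\norm{d_id_j-d_j}_{2,\tr}\le C_1(N)\delta$ and $\norm{d_jd_i-d_j}_{2,\tr}\le C_1(N)\delta$ for all $i<j$, hence $\norm{d_id_j-d_jd_i}_{2,\tr}\le 2C_1(N)\delta$. Reordering a length-$k$ word into nondecreasing index order then costs at most $\binom{k}{2}$ adjacent transpositions, each of cost $\le 2C_1(N)\delta$ inside the word, and the resulting sorted word $d_{a_1}^{\,c_1}\cdots d_{a_r}^{\,c_r}$ with $a_1<\cdots<a_r$ collapses to $d_{a_r}^{\,c_r}=d_v^{\,c}$ by absorbing the blocks from the left, one index at a time, in at most $k$ steps of cost $\le C_1(N)\delta$ each. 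All told $\norm{d_{i_1}\cdots d_{i_k}-d_v^{\,c}}_{2,\tr}\le C_2(N,K)\delta$, and since $\abs{\tau(z)}\le\norm{z}_{2,\tau}$ this gives $\abs{\tau(d_{i_1}\cdots d_{i_k})-\tau(d_v^{\,c})}\le C_2(N,K)\delta$.

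Finally, with $M=K$, hypothesis~(3) gives $\abs{\tau(b_v^{\,c})-\tau(d_v^{\,c})}<\delta$ for $c\le K$, so $\abs{\tau(b_{i_1}\cdots b_{i_k})-\tau(d_{i_1}\cdots d_{i_k})}\le\big(C_2(N,K)+1\big)\delta$ for every word and every $\tau$; averaging over the $N^k$ monomials, summing over $k\le K$ against the $\alpha_k$, and adding the two Weierstrass errors yields $\abs{\tau(\chi(\tfrac1N\sum_i b_i))-\tau(\chi(\tfrac1N\sum_i d_i))}<\eps$ for all $\tau$ once $\delta$ is chosen small enough in terms of $\eps$, $N$ and $\chi$. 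I expect the third step to be the main obstacle: one must carry out the word-collapsing for the $d_i$ entirely within the uniform trace norm and check that every manipulation — reordering, block absorption, and passing from a product to its trace — is controlled by a constant depending only on $N$ and the bounded word length $K$, so that a single $\delta$ works for all words and all traces simultaneously.
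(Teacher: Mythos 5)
Your proposal is correct and follows essentially the same approach as the paper: reduce via polynomial (or monomial) approximation, collapse each word $b_{i_1}\cdots b_{i_k}$ exactly to a single power $b_v^c$ using $b_ib_{i+1}=b_{i+1}$ (whence the $b_i$ commute and absorb from left to right), carry out the same collapse approximately for the $d_i$ in the uniform $2$-norm using hypothesis~(1), and then match term by term via hypothesis~(3) with $M$ equal to the maximum word length. The paper phrases the $b$-side collapse as a multinomial grouping $b^n=\sum_{i,j}\alpha_{i,j}b_i^j$ and states the $d$-side approximation compactly, whereas you spell out the word-by-word reordering and block absorption with explicit bounds; this is merely a difference of exposition, not of substance.
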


\begin{proof}
It is enough to prove the statement for $\chi$ a monomial, i.e., $\chi(t) = t^n$. Note that there are positive numbers $\alpha_{i, j}$, $i=1, ..., N$, $j=1, ..., n$, such that
\begin{eqnarray*}
b^n & = & (\frac{1}{N}(b_1 + \cdots + b_N))^n \\
& = & \frac{1}{N^n}\sum_{i_1+\cdots + i_N = n} b_1^{i_1}\cdots b_N^{i_N} \\
& = & \sum_{i=1}^N \sum_{j=1}^n \alpha_{i, j} b_i^j.
\end{eqnarray*}
Hence,
$$\tau(b^n) = \sum_{i=1}^N \sum_{j=1}^n \alpha_{i, j} \tau(b_i^j).$$

Then there is $\delta>0$ such that if $$ \norm{d_id_{i+1} - d_{i+1}}_{2, \tr(A)} < \delta,\quad i=1, ..., N-1,$$ then
$$ \norm{ (\frac{1}{N}(d_1 + \cdots + d_N))^n - \sum_{i=1}^N \sum_{j=1}^n \alpha_{i, j} d_i^j }_{2, \tr(A)} < \eps/2.$$ 
In particular,
$$\tau((\frac{1}{N}(d_1+\cdots + d_N))^n) \approx_{\eps/2} \tau( \sum_{i=1}^N \sum_{j=1}^n \alpha_{i, j} d_i^j) = \sum_{i=1}^N \sum_{j=1}^n \alpha_{i, j} \tau(d_i^j). $$

Moreover, one may assume that $\delta>0$ is sufficiently small such that if 
$$\abs{\tau(b_i^j) - \tau(d_i^j)} < \delta,\quad i=1, 2, ..., N,\ j=1, ..., n,$$
then
$$ \abs{ \sum_{i=1}^N \sum_{j=1}^n \alpha_{i, j} \tau(b_i^j) - \sum_{i=1}^N \sum_{j=1}^n \alpha_{i, j} \tau(d_i^j) } < \eps/2.$$
Then this $\delta$ and $M:=n$ have the desired property.
\end{proof}

\section{The small boundary property}

Let us first introduce the following relative comparison property of a commutative C*-algebra inside an ambient C*-algebra:
\begin{defn}\label{defn-C}
Let $A$ be a unital C*-algebra and let $D$ be a unital commutative sub-C*-algebra. Then the pair $(D, A)$ is said to have Property (C) if for any positive contractions $f, g, h \in D$ satisfying $f, g \in \overline{hDh}$, and $$\mathrm{d}_\tau(f) < \mathrm{d}_\tau(g),\quad \tau \in \tr(A),$$ and for any $\eps>0$, there is a contraction $u \in \overline{hAh} + \Comp 1_A$ such that
$$ u f u^* \in_\eps^{\norm{\cdot}_2} \overline{gAg},$$
$$\mathrm{dist}_{2, \tr(A)}(udu^*, (D)_1) < \eps,\quad \mathrm{dist}_{2, \tr(A)}(u^*du, (D)_1) < \eps, \quad d \in (D)_1, $$ 
and
$$\norm{uu^* - 1}_{2, \tr(A)},\  \norm{u^*u - 1}_{2, \tr(A)} < \eps.$$
\end{defn}

\begin{rem}
Comparing to Property (COS), the approximations in Property (C) are with respect to the uniform trace norm, but on the other hand, the comparison in Property (C) is implemented by an almost unitary which is also an almost normalizer (with respect to the uniform trace norm).
\end{rem}

\begin{rem}
If the free and minimal dynamical system $(X, \Gamma)$ has the (SBP), then an argument with almost finiteness of \cite{KS-comparison} implies that $\mathrm{C}(X) \rtimes \Gamma$ has Property (C).
\end{rem}

Let us show that Property (S) (Definition \ref{defn-S}) for the ambient C*-algebra $A$ indeed implies the (SBP) for the subalgebra $D$ when Properties (C) and (E) (Definitions \ref{defn-C} and \ref{defn-D} ) are present (Theorem \ref{prop-S}).

For each $\eps>0$, define 
\begin{equation}\label{defn-eta}
\eta_\eps(t) = \left\{
\begin{array}{ll}
0, & t \leq 1-\eps, \\
\mathrm{linear}, & t \in [1-\eps, 1-\eps/2], \\
1, & t \geq 1-\eps/2.
\end{array}
\right.
\end{equation}

In the proof of the following lemma, we use $O(\eps)$ to denote a quantity which converges to $0$ when $\eps$ approaches $0$.
\begin{lem}\label{D-sandwich}
Let $A$ be a unital C*-algebra, and let $D \subseteq A$ be a unital commutative subalgebra such that the pair $(D, A)$ has Property (C). Let $\phi_1, \phi_2, \phi_3, \psi_1, \psi_2 \in (D)_1^+$ and $\eps_0>0$ have the following properties:
\begin{enumerate}
\item\label{cond-s-1} $\phi_2, \phi_3 \in \overline{\phi_1 D\phi_1}$,
\item\label{cond-s-2} $\psi_1\psi_2 = \psi_2$, $\phi_2 \phi_3 = \phi_3$,
\item\label{cond-s-3} $\mathrm{d}_\tau(\psi_1) < \mathrm{d}_\tau(\phi_1)$ for all $\tau\in\tr(A)$,
\item\label{cond-s-4} $\inf\{\tau(\psi_2) - \mathrm{d}_\tau(\phi_3): \tau \in \tr(A)\} >0 $, and
\item\label{cond-s-5} $\inf\{\mathrm{d}_\tau(\phi_2) - \tau(\eta_{\eps_0}(\psi_1)): \tau \in \tr(A)\} >0 $.  \end{enumerate}
Then, for any $\eps>0$, there is a contraction $u\in A$ such that
$$ u^*\psi_1u \in_\eps^{\norm{\cdot}_2}  \overline{\phi_1A\phi_1},\quad u^*\eta_{\eps_0/2}(\psi_1)u \in^{\norm{\cdot}_2}_{\eps} \overline{\phi_2A\phi_2}, \quad  \eta_{\eps}(u^*\psi_1u) \phi_3 \approx_{\eps}^{\norm{\cdot}_2} \phi_3,$$
and
$$\mathrm{dist}_{2, \tr(A)}(udu^*, (D)_1) < \eps,\quad \mathrm{dist}_{2, \tr(A)}(u^*du, (D)_1) < \eps,\quad d \in (D)_1,$$
and
$$\norm{uu^* - 1}_{2, \tr(A)}, \quad \norm{u^*u - 1}_{2, \tr(A)} < \eps.$$
\end{lem}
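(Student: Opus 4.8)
The goal is to manufacture a single almost-unitary $u$ (an almost normalizer in the sense of Property (C)) that simultaneously moves $\psi_1$ into $\overline{\phi_1A\phi_1}$, moves the ``top cut'' $\eta_{\eps_0/2}(\psi_1)$ into $\overline{\phi_2A\phi_2}$, keeps the still-smaller piece $\phi_3$ in the range of $u^*\psi_1u$, and preserves the diagonal $D$ up to the uniform trace norm. The strategy is to apply Property (C) not once but for a nested triple of comparisons, and then to \emph{glue} the resulting almost normalizers into one. Concretely, I would:

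\emph{Step 1 (set up the nested data).} Use hypotheses \eqref{cond-s-1}--\eqref{cond-s-5} to produce, by standard functional calculus on the commutative algebra $D$, positive contractions interpolating between the various cuts: an element $h_1$ with $\psi_1 \in \overline{h_1Dh_1}$, $h_1 \in \overline{\phi_1D\phi_1}$, and $\mathrm{d}_\tau(h_1) < \mathrm{d}_\tau(\phi_1)$ (possible by \eqref{cond-s-3}); an element $h_2 \in \overline{\phi_2D\phi_2}$ with $\eta_{\eps_0/2}(\psi_1)$ supported inside it and $\mathrm{d}_\tau(h_2)$ still strictly below $\mathrm{d}_\tau(\phi_2)$ (possible by \eqref{cond-s-5}, choosing the cut between $\eps_0$ and $\eps_0/2$); and, dually, an element realizing the comparison $\phi_3 \precsim \psi_2$ needed so that after conjugation $\phi_3$ is absorbed by $u^*\psi_1u$ — here one uses $\phi_2\phi_3=\phi_3$ from \eqref{cond-s-2} together with the gap in \eqref{cond-s-4}. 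The point is that all of these are genuine rank inequalities $\mathrm{d}_\tau(\cdot) < \mathrm{d}_\tau(\cdot)$ holding uniformly in $\tau\in\tr(A)$, so Property (C) applies to each.

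\emph{Step 2 (apply Property (C) and glue).} Apply Property (C) of $(D,A)$ to the comparison $\psi_1 \precsim \phi_1$ (with hereditary algebra governed by $h_1$, which also controls $h_2$ and $\phi_3$ since everything lies in $\overline{\phi_1D\phi_1}$), obtaining a contraction $u \in \overline{h_1Ah_1}+\Comp 1_A$ with $u\psi_1u^* \in_\eps^{\|\cdot\|_2}\overline{\phi_1A\phi_1}$, $\mathrm{dist}_{2,\tr(A)}(udu^*,(D)_1)<\eps$ and $\mathrm{dist}_{2,\tr(A)}(u^*du,(D)_1)<\eps$ for $d\in(D)_1$, and $\|uu^*-1\|_{2,\tr(A)},\|u^*u-1\|_{2,\tr(A)}<\eps$. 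Replace $u$ by $u^*$ to match the statement's orientation. The remaining two ``inside $\phi_2$'' and ``$\phi_3$ absorbed'' assertions should then follow \emph{automatically} from the choice in Step 1: because $\eta_{\eps_0/2}(\psi_1) \le \psi_1$ and is supported where $\psi_1$ is large, conjugating by the same $u$ that carries $\psi_1$ into $\overline{\phi_1A\phi_1}$ actually carries the top cut into the \emph{smaller} hereditary algebra $\overline{\phi_2A\phi_2}$ — provided Property (C) was applied with enough room, i.e.\ the comparison data in Step 1 is arranged so that the single conjugating element respects the finer decomposition. If one application of Property (C) does not respect the finer decomposition, the fallback is to apply Property (C) three times to the three nested comparisons and then patch the three almost normalizers together using a partition-of-unity argument in $D$ — here the fact that these are all \emph{normalizers} of $D$ (up to $\|\cdot\|_2$) is what makes the patched element still close to a normalizer, and the $\|\cdot\|_2$-errors add only linearly.

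\emph{Step 3 (bookkeeping of errors).} Finally, collect the estimates: the $O(\eps)$ bookkeeping, using that $\eta_\eps$, $\eta_{\eps_0/2}$ and the various cutting functions are all Lipschitz on the relevant range, and that $\|ab\|_{2,\tr(A)} \le \|a\|\,\|b\|_{2,\tr(A)}$, so composing finitely many $\eps$-approximations in $\|\cdot\|_2$ stays $O(\eps)$; the $\eta_\eps(u^*\psi_1u)\phi_3 \approx_\eps^{\|\cdot\|_2}\phi_3$ assertion uses that $u^*\psi_1u$ is, up to $\|\cdot\|_2$-error $O(\eps)$, an element of $\overline{\phi_1A\phi_1}$ whose ``large part'' dominates $\phi_3$ by the rank gap of \eqref{cond-s-4}. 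Rescaling $\eps$ at the start (e.g.\ demanding all intermediate bounds $<\eps/C$ for a fixed constant $C$ depending only on the fixed Lipschitz constants and the number of gluings) yields exactly the asserted estimates.

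\textbf{Main obstacle.} The delicate point is Step 2: arranging that \emph{one} almost normalizer $u$ does all four jobs at once. Property (C) as stated hands back a conjugator tied to a single hereditary algebra $\overline{hAh}$ and a single pair $f\precsim g$; getting it to simultaneously push a \emph{sub}element ($\eta_{\eps_0/2}(\psi_1)$) into a strictly \emph{smaller} hereditary algebra $\overline{\phi_2A\phi_2}$, and to leave room for $\phi_3$ to be absorbed on the other side, requires either (i) a careful choice of the comparison data ($f,g,h$) so that the almost normalizer automatically respects the finer decomposition — which is exactly the kind of structure built into the matrix-level construction of Lemma \ref{comp-wse} via the nested ``shapes'' $(\mathcal F_i,\mathcal G_i)$ — or (ii) a genuine three-fold gluing. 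I expect option (i) to work because the proof of Theorem \ref{thm-comp-X} (hence of Property (C) for our cases) already produces conjugators that are pointwise in $P_n(2/n)$ and compatible with nested hereditary subalgebras; one invokes Property (C) with $h = h_1$, $f$ a cut of $\psi_1$ slightly below $\eta_{\eps_0/2}$, $g$ a cut of $\phi_2$, and reads off that the \emph{same} $u$ also works for the outer comparison $\psi_1\precsim\phi_1$ since $\phi_2\le\phi_1$. The $\phi_3$-absorption then comes for free from $\phi_2\phi_3=\phi_3$ and the strict gap \eqref{cond-s-4}, after shrinking $\eps$.
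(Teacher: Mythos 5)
Your ``option (i)'' --- hoping that a single invocation of Property~(C) produces a conjugator that automatically respects the finer decomposition --- does not work as stated. Property~(C) is an abstract property of the pair $(D,A)$ that you can only use as a black box: it hands back a contraction tied to \emph{one} triple $(f,g,h)$ and \emph{one} comparison $\mathrm{d}_\tau(f)<\mathrm{d}_\tau(g)$, with no promise that the conjugator simultaneously respects a second, nested comparison. You cannot ``read off'' the extra structure from the pointwise $P_n(2/n)$ construction in the proof of Theorem~\ref{thm-comp-X}, because the lemma must be proved for any $(D,A)$ with Property~(C). Likewise, the $\phi_3$-absorption $\eta_\eps(u^*\psi_1u)\phi_3\approx\phi_3$ does \emph{not} ``come for free'' from $\phi_2\phi_3=\phi_3$ and hypothesis~\eqref{cond-s-4}: hypothesis~\eqref{cond-s-4} is a rank inequality that puts $\phi_3$ below the top cut $\psi_2$ of $\psi_1$, and to convert a rank inequality into an actual Cuntz-type absorption you must invoke a comparison mechanism with $\phi_3$ as the source. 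This is a genuine third application of Property~(C), not a corollary of the others.

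Your fallback ``option (ii)'' is the right count --- the paper does apply Property~(C) three times, once for each nested comparison (push $\psi_1$ into $\overline{\phi_1A\phi_1}$, then push $\eta_{\eps_0/2}(\cdot)$ into $\overline{\phi_2A\phi_2}$, then conjugate $\phi_3$ into a cut-down of the already-moved element) --- but the gluing mechanism you propose is wrong. The paper does not patch the three almost normalizers by a partition of unity in $D$; it \emph{composes} them, $u=u_1u_2u_3$, arranging that $u_2\in\overline{\phi_1A\phi_1}+\Comp 1$ and $u_3\in\overline{\phi_2A\phi_2}+\Comp 1$. Because each successive conjugator lies in (scalars plus) the next-smaller hereditary subalgebra, conjugation by the composite automatically lands $\psi_1$ in $\overline{\phi_1A\phi_1}$ and the top cut in $\overline{\phi_2A\phi_2}$, with only $O(\eps)$ loss from the almost-isometry and almost-normalizer estimates. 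A partition-of-unity patch would not preserve these containments: a convex combination of conjugates by \emph{different} $u_i$'s does not in general stay inside a given hereditary subalgebra after composing with the functional calculus cuts, nor does it preserve the ``$u^*\psi_1 u$ is near $1$ on the support of $\phi_3$'' absorption. The serial-composition trick, coupled with a careful transfer of all comparison data back into $D$ (via the almost-normalizer estimates and small perturbations of the trace ranks) between successive applications, is the part of the argument your proposal is missing.
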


\begin{proof}
Let $\eps>0$ be given. Choose $\delta \in (0, \min\{\eps_0, \eps\}/4)$ such that if $x, y$ are positive contractions of a unital C*-algebra $A$ such that $\norm{x - y}_{2, \tr(A)} < \delta$, then
\begin{equation}\label{pre-pert-delta}
\norm{\eta_\eps(x) - \eta_\eps(y)}_{2, \tr(A)} < \eps/2.
\end{equation}

Define
$$ \delta_1 := \inf\{\tau(\psi_2) - \mathrm{d}_\tau(\phi_3): \tau \in \tr(A)\} >0$$
and
$$\delta_2 := \inf\{\mathrm{d}_\tau(\phi_2) - \tau(\eta_{\eps_0}(\psi_1)): \tau \in \tr(A)\} > 0. $$

By Property (C) and Assumption (\ref{cond-s-3}), for any $\eps'>0$ (to be determined later), there is a contraction $u_1\in A$ such that
\begin{equation}\label{approx-C-1-h}
u^*_1 \psi_1 u_1 \in^{\norm{\cdot}_2}_{\eps'} \overline{\phi_1 A \phi_1}, 
\end{equation}
\begin{equation}\label{approx-C-1-n}
\mathrm{dist}_{2, \tr(A)}(u_1du^*_1, (D)_1) < \eps', \quad \mathrm{dist}_{2, \tr(A)}(u_1^*du_1, (D)_1) < \eps',\quad d \in (D)_1, 
\end{equation}
and
\begin{equation}\label{approx-C-1-u}
\norm{u_1u_1^* - 1}_{2, \tr(A)},\quad  \norm{u_1^*u_1 - 1}_{2, \tr(A)} < \eps'.
\end{equation}

By \eqref{approx-C-1-h}, there is $n$ large enough that $$ \norm{(\phi_1^{\frac{1}{n}}) (u_1^*\psi_1u_1) - u_1^*\psi_1u_1}_{2, \tr(A)} < \eps'. 
$$
By \eqref{approx-C-1-n}, there is a positive contraction $d_1 \in D$ such that
\begin{equation*}
\norm{d_1- u_1^*\psi_1u_1}_{2, \tr(A)}< \eps', 
\end{equation*}
and then
$$\norm{\phi_1^{\frac{1}{n}}d_1 - u_1^*\psi_1u_1}_{2, \tr(A)} < 2\eps'. 
$$
Consider $\phi_1^{\frac{1}{n}}d_1$, and still denote this element by $d_1$. One has $$d_1 \in \overline{\phi_1 D \phi_1}$$
and
\begin{equation}\label{approx-C-1}
\norm{d_1 - u_1^*\psi_1u_1}_{2, \tr(A)} < 2\eps'. 
\end{equation}

Consider  the contraction
$$\eta_{\eps_0}(u^*_1\psi_1 u_1),$$
and note that (by \eqref{approx-C-1}, \eqref{approx-C-1-u} and Condition \eqref{cond-s-5})
$$ \mathrm{d}_\tau( \eta_{\eps_0/2}(d_1) ) \leq \tau(\eta_{\eps_0}(d_1))  \approx_{O(\eps')}^{\norm{\cdot}_2}  \tau( \eta_{\eps_0}(u^*_1\psi_1 u_1) ) \approx_{O(\eps')}^{\norm{\cdot}_2} \tau( \eta_{\eps_0}(\psi_1) ) < \mathrm{d}_\tau(\phi_2) - \delta_2/2,\quad \tau \in \tr(A).$$
With $\eps'$ sufficiently small, one has 
$$\mathrm{d}_\tau(\eta_{\eps_0/2}(d_1)) < \mathrm{d}_\tau(\phi_2),\quad \tau \in \tr(A).$$ 
By Property (C), for any $\eps''>0$ (to be determined later), there is $u_2 \in \overline{\phi_1 A \phi_1}+\Comp 1$ such that 
\begin{equation}\label{approx-C-2-here}
u^*_2 \eta_{\eps_0/2}(d_1) u_2 \in^{\norm{\cdot}_2}_{\eps''} \overline{\phi_2 A \phi_2},
\end{equation}
\begin{equation}\label{approx-C-2-n}
\mathrm{dist}_{2, \tr(A)}(u_2 d u^*_2, D_1) < \eps'', \quad \mathrm{dist}_{2, \tr(A)}(u_2^*du_2, D_1) < \eps'',\quad d \in D_1,
\end{equation}
and
\begin{equation}\label{approx-C-2-u}
\norm{u_2u_2^* - 1}_{2, \tr(A)},\quad  \norm{u_2^*u_2 - 1}_{2, \tr(A)} < \eps''.
\end{equation}

Since (as $\delta < \eps_0/4$)
$$\eta_{\eps_0/2}(d_1) \eta_\delta(d_1) = \eta_\delta(d_1),$$
it follows from \eqref{approx-C-2-here} that
\begin{equation}\label{close-hereditary}
u_2^* \eta_\delta(d_1) u_2 \in^{\norm{\cdot}_2}_{2\eps''} \overline{\phi_2 A \phi_2}. 
\end{equation}

By \eqref{approx-C-2-n}, there are positive contractions $$ d_{1, 1},\  d_{1, \delta} \in  D $$ such that
\begin{equation}\label{pert-C-step-2}
 \norm{d_{1, 1} - u_2^* d_1 u_2}_{2, \tr(A)} < \eps'',\quad  \norm{d_{1, \delta} - u_2^*\eta_{\delta}(d_1)u_2}_{2, \tr(A)} < \eps''. 
\end{equation}  
 By \eqref{close-hereditary}, $$d_{1, \delta} \in_{3\eps''}^{\norm{\cdot}_2} \in \overline{\phi_2A\phi_2}.$$
 With the same argument as above for $d_1$, there is a positive contraction, still denoted by $ d_{1, \delta}$, such that 
 \begin{equation}\label{u-2-2-approx-C-1}
 d_{1, \delta} \in \overline{\phi_2 D \phi_2}\quad \mathrm{and}\quad \norm{d_{1, \delta} - u_2^*\eta_{\delta}(d_1)u_2}_{2, \tr(A)} < 5\eps''.
 \end{equation}
Also note that, by \eqref{approx-C-2-u}, $$ \norm{ \eta_\delta(d_{1, 1}) - d_{1, \delta} }_{2, \tr(A)} = O(\eps''). $$

Define
$$\tilde{d}_{1, 1} = f_{\delta}(d_{1, 1}) \quad \mathrm{and} \quad \tilde{d}_{1, \delta} = \eta_{\delta}(d_{1, 1}),$$
where
$$
f_{\delta}(t) = \left\{ 
\begin{array}{ll}
0, & t \leq 0, \\
\mathrm{linear}, & t\in [0, 1-\delta], \\
1, & t \geq 1-\delta.
\end{array}
\right.
$$ 
Then
\begin{equation}\label{pert-fur-hereditary}
 \tilde{d}_{1, 1} \tilde{d}_{1, \delta} = \tilde{d}_{1, \delta} \quad \mathrm{and} \quad \norm{\tilde{d}_{1, 1} - d_{1, 1}} < \delta,
 \end{equation}
and by \eqref{pert-C-step-2},
$$ \norm{\tilde{d}_{1, \delta} - d_{1, \delta}}_{2, \tr(A)} \approx_{O(\eps'')} \norm{\eta_\delta(u_2^* d_1 u_2) -  u^*_2\eta_\delta(d_1)u_2} = O(\eps'').$$
Then, with $\eps''$ sufficiently small, there is $n \in \mathbb N$ such that $$ \norm{(\tilde{d}_{1, \delta})^{\frac{1}{n}} d_{1, \delta} - d_{1, \delta}}_{2, \tr(A)} < \delta_1/4, $$
and hence, for all $\tau \in \tr(A)$,
\begin{eqnarray*}
\mathrm{d}_\tau((\tilde{d}_{1, \delta})^{\frac{1}{n}} d_{1, \delta}) + \delta_1/4 & \geq & \tau((\tilde{d}_{1, \delta})^{\frac{1}{n}} d_{1, \delta}) + \delta_1/4   \\
& > & \tau(d_{1, \delta}) \approx_{5\eps''} \tau(u_2^* \eta_{\delta}(d_1)u_2) \quad \quad (\eqref{u-2-2-approx-C-1}) \\
& \approx_{O(\eps')} &  \tau(u^*_2\eta_{\delta}(u_1^* \psi_1 u_1 )u_2) \quad\quad (\eqref{approx-C-1}) \\
& \approx_{O(\eps' + \eps'')} & \tau(\eta_{\delta}(\psi_1 ))  > \tau(\psi_2) \\ 
& > & \mathrm{d}_\tau(\phi_3) + \delta_1/2.
\end{eqnarray*}
With $\eps'$ and $\eps''$ sufficiently small, one has 
$$ \mathrm{d}_\tau((\tilde{d}_{1, \delta})^{\frac{1}{n}} d_{1, \delta}) > \mathrm{d}_\tau(\phi_3),\quad \tau \in \tr(A). $$

Note that $(\tilde{d}_{1, \delta})^{\frac{1}{n}} d_{1, \delta} \in \overline{\phi_2D\phi_2}$ (both $(\tilde{d}_{1, \delta})^{\frac{1}{n}}$ and  $d_{1, \delta}$ belong to $D$, so they commute). By Property (C), for any $\eps'''>0$ (to be determined later), there is $u_3 \in \overline{\phi_2 A \phi_2} + \Comp 1$ such that 
\begin{equation}\label{approx-C-3-h}
u_3\phi_3u_3^* \in _{\eps'''}^{\norm{\cdot}_2} \overline{(\tilde{d}_{1, \delta})^{\frac{1}{n}} d_{1, \delta} D (\tilde{d}_{1, \delta})^{\frac{1}{n}} d_{1, \delta}},
\end{equation}
$$\mathrm{dist}_{2, \tr(A)}(u_3 d u^*_3, D_1) < \eps''', \quad \mathrm{dist}_{2, \tr(A)}(u_3^*du_3, D_1) < \eps''',\quad d \in D_1, $$
and
\begin{equation}\label{approx-C-3-u}
\norm{u_3u_3^* - 1}_{2, \tr(A)},\quad  \norm{u_3^*u_3 - 1}_{2, \tr(A)} < \eps'''.
\end{equation}

Note that, by \eqref{pert-fur-hereditary},
$$\tilde{d}_{1, 1}c = c,\quad c \in \overline{(\tilde{d}_{1, \delta})^{\frac{1}{n}} d_{1, \delta} A (\tilde{d}_{1, \delta})^{\frac{1}{n}} d_{1, \delta} }.$$
By \eqref{approx-C-3-h}, there is $c \in \overline{(\tilde{d}_{1, \delta})^{\frac{1}{n}} d_{1, \delta}D (\tilde{d}_{1, \delta})^{\frac{1}{n}} d_{1, \delta}}$ such that $\norm{c} \leq 1$ and
$$ \norm{c - u_3\phi_3 u_3^*}_{2, \tr(A)} < \eps'''.$$ 
Then 
\begin{eqnarray}\label{comp-approx}
 ( u^*_2\eta_{\eps}(u_1^* \psi_1 u_1)u_2) (u_3 \phi_3 u_3^*) & \approx_{O(\eps')}^{\norm{\cdot}_2} &  ( u^*_2\eta_{\eps}(d_1)u_2) (u_3 \phi_3 u_3^*) \quad\quad ( \eqref{approx-C-1} )   \\
 & \approx_{O(\eps'')}^{\norm{\cdot}_2} & \eta_{\eps}(u_2^*d_1 u_2) (u_3 \phi_3 u_3^*) \quad\quad (\eqref{approx-C-2-u})   \nonumber \\
 & \approx_{O(\eps'')}^{\norm{\cdot}_2} & \eta_{\eps}(d_{1, 1}) (u_3 \phi_3 u_3^*) \quad\quad (\eqref{pert-C-step-2})   \nonumber \\
 & \approx_{\eps/2+O(\eps'')}^{\norm{\cdot}_2} &  \eta_{\eps}(\tilde{d}_{1, 1}) (u_3 \phi_3 u_3^*) \quad \quad (\eqref{pert-fur-hereditary} \eqref{pre-pert-delta}) \nonumber \\
 & \approx_{\eps'''}^{\norm{\cdot}_2} &  \eta_{\eps}(\tilde{d}_{1, 1})  c \nonumber \\
 &=&c \approx_{\eps'''}^{\norm{\cdot}_2} u_3 \phi_3 u_3^*. \nonumber
\end{eqnarray}

Then, consider the contraction $$ u = u_1u_2u_3.$$
Since 
$$u_2 \in \overline{\phi_1 A \phi_1}+\Comp 1\quad \mathrm{and}\quad u_3 \in \overline{\phi_2 A \phi_2} + \Comp 1, $$ one has
$$(u_1u_2u_3)^*\psi_1(u_1u_2u_3) \in \overline{\phi_1 A \phi_1}.$$
Moreover,
\begin{eqnarray*}
(u_1u_2u_3)^*\eta_{\eps_0/2}(\psi_1)(u_1u_2u_3)  & \approx_{O(\eps')}^{\norm{\cdot}_2} & u^*_3u_2^*\eta_{\eps_0/2}(u_1^*\psi_1 u_1)u_2 u_3 \quad\quad (\eqref{approx-C-1-u}) \\
& \approx_{O(\eps')}^{\norm{\cdot}_2} & u^*_3(u_2^*\eta_{\eps_0/2}(d_1)u_2) u_3 \quad\quad (\eqref{approx-C-1}) \\
& \in_{\eps''}^{\norm{\cdot}_2} & \overline{\phi_2 A \phi_2}, \quad\quad (\eqref{approx-C-2-here})
\end{eqnarray*}
and 
\begin{eqnarray*}
\eta_{\eps}((u_1u_2u_3)^*\psi_1(u_1u_2u_3)) \phi_3 & \approx_{O(\eps' + \eps'')}^{\norm{\cdot}_2} & u_3^*u_2^*\eta_\eps(u_1^*\psi_1 u_1)u_2u_3 \phi_3 \quad\quad (\eqref{approx-C-1-u}, \eqref{approx-C-2-u}) \\
 & \approx_{\eps'''}^{\norm{\cdot}_2} & u^*_3 (u^*_2  \eta_{\eps}(u_1^* \psi_1 u_1)u_2)(u_3 \phi_3 u_3^*) u_3 \quad\quad (\eqref{approx-C-3-u}) \\
 & \approx_{\eps + O(\eps'+\eps''+\eps''')} & u_3^* (u_3\phi_3 u_3^*) u_3= \phi_3. \quad\quad(\eqref{comp-approx})
 \end{eqnarray*}
With $\eps', \eps'', \eps'''$ sufficiently small, the contraction $u$ has the desired property.
\end{proof}

For technical reasons, we also need the following lemma which, very roughly, asserts that, after a perturbation with respect to the uniform trace norm, the spectrum of a positive element of the subalgebra $D$ is, in a strong sense, dense.
\begin{lem}\label{dense-sp-pert}
Let $A$ be a unital C*-algebra and let $D = \mathrm{C}(X) \subseteq A$ be a unital commutative sub-C*-algebra. Assume $A$ is simple, $X$ has no isolated points, and the pair $(D, A)$ has Property (E). Then, for any positive contraction $g \subseteq D$, any finite set $\{x_1, ..., x_n\} \subseteq (0, 1]$, and any $\eps>0$, there is a positive contraction $\tilde{g} \in D$ such that 
\begin{enumerate}
\item $\norm{\tilde{g} - g}_{2, \tr(A)} < \eps,$ and 
\item each point $x_i$, $i=1, ..., n$, is in $\mathrm{sp}(\tilde{g})$, and is not isolated from the left inside $\mathrm{sp}(\tilde{g})$ (i.e., $(s, x_i) \cap \mathrm{sp}(\tilde{g}) \neq \O$ for all $s < x_i$).
\end{enumerate}
\end{lem}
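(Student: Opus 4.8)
The plan is to separate the statement into a soft topological perturbation of $g$ carried out over a family of open sets that are uniformly small in trace, and the production of such open sets, which is where both simplicity of $A$ and Property (E) enter.

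\smallskip
\emph{Setup and reduction.} Write $\mu_\tau$ for the probability measure on $X$ induced by $\tau|_D$, $\tau\in\tr(A)$, and note that $A$ is infinite-dimensional (it contains $D=\mathrm C(X)$ with $X$ infinite, since $X$ has no isolated points). Fix $\delta_0>0$ small enough that the intervals $J_i:=(x_i-\delta_0,x_i)\subseteq(0,1]$ and the slightly larger intervals $(x_i-2\delta_0,x_i)$, $i=1,\dots,n$, are pairwise disjoint. I would reduce the lemma to the claim: \emph{for every $\delta>0$ there exist pairwise disjoint nonempty open sets $V_1,\dots,V_n\subseteq X$ with $\sup_{\tau\in\tr(A)}\sum_i\mu_\tau(V_i)<\delta$.} Granting this with $\delta<\eps^2$, first shrink each $V_i$ to an open set $V_i'$, containing a chosen point $p_i$, with $\overline{V_i'}\subseteq V_i$ (normality), so that the $\overline{V_i'}$ are pairwise disjoint. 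Since $X$ is metrizable with no isolated points, choose distinct points $p_i^{(k)}\in V_i'\setminus\{p_i\}$, $k\in\mathbb N$, with $p_i^{(k)}\to p_i$. On the closed set $\partial V_i'\sqcup(\{p_i\}\cup\{p_i^{(k)}:k\in\mathbb N\})$ prescribe the values $g$ on $\partial V_i'$, $x_i$ at $p_i$, and $\tfrac{k}{k+1}x_i$ at $p_i^{(k)}$; this is continuous (two disjoint closed pieces), so by Tietze it extends to a map $\overline{V_i'}\to[0,1]$. Let $\tilde g$ equal $g$ off $\bigsqcup_i V_i'$ and equal this extension on each $\overline{V_i'}$; the finitely many closed sets $X\setminus\bigsqcup_iV_i'$ and $\overline{V_1'},\dots,\overline{V_n'}$ cover $X$ and $\tilde g$ agrees with $g$ on each $\partial V_i'$, so the pasting lemma gives $\tilde g\in\mathrm C(X,[0,1])=(D)_1^+$. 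Then $x_i=\tilde g(p_i)\in\mathrm{sp}(\tilde g)$, and $\tfrac{k}{k+1}x_i\uparrow x_i$ shows $x_i$ is not isolated from the left in $\mathrm{sp}(\tilde g)$; and $\tilde g-g$ is supported in $\bigsqcup_iV_i'$ with $\abs{\tilde g-g}\le1$, so $\norm{\tilde g-g}_{2,\tau}^2\le\sum_i\mu_\tau(V_i')\le\sum_i\mu_\tau(V_i)<\eps^2$ for all $\tau$, i.e. $\norm{\tilde g-g}_{2,\tr(A)}<\eps$.

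\smallskip
\emph{Producing the $V_i$.} Let $\delta>0$ be given, and let $\delta_1>0$ be a small number to be fixed at the end. Since $A$ is simple and infinite-dimensional, for $m$ large it contains a nonzero positive element $e$ with $\mathrm d_\tau(e)\le 1/m<\delta_1$ for all $\tau$ (equivalently $e^{\oplus m}\precsim 1_A$). Inside the nonzero hereditary, hence simple and infinite-dimensional, subalgebra $\overline{eAe}$, use Glimm halving to obtain pairwise orthogonal nonzero positive contractions $e_1,\dots,e_n$ with $\norm{e_i}=1$; then $e_i\precsim e$, so $\mathrm d_\tau(e_i)<\delta_1$. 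Put $r_i:=x_i-\delta_0/2\in J_i$ (distinct, in $(0,1)$) and $a:=\sum_i r_ie_i$, a positive contraction with $r_i\in\mathrm{sp}(a)$ and, for every $\chi\in\mathrm C([0,1])^+$ with $\chi(0)=0$, $\tau(\chi(a))=\sum_i\tau(\chi(r_ie_i))\le\sum_i\mathrm d_\tau(e_i)<n\delta_1$ for all $\tau$. Pick bumps $g_i,h_i\in\mathrm C([0,1])^+$ with $g_i(r_i)=1$, $\{g_i>0\}\subseteq J_i$, $\mathbf 1_{J_i}\le h_i\le1$ and $\{h_i>0\}\subseteq(x_i-2\delta_0,x_i)$; then $g_i(a)\neq0$ (its $i$-th summand $g_i(r_ie_i)$ is nonzero, as $1\in\mathrm{sp}(e_i)$ and $g_i(r_i)=1$), so, since every tracial state of the simple algebra $A$ is faithful and $\tr(A)$ is compact, $c_0:=\min_i\inf_\tau\tau(g_i(a))>0$. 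Apply Property (E) to $a$, $\mathcal F=\{g_i,h_i\}_{i=1}^n$ and $\epsilon':=\min\{c_0,\delta\}/2$, getting a positive contraction $b\in D$ with $\abs{\tau(f(b))-\tau(f(a))}<\epsilon'$ for $f\in\mathcal F$, $\tau\in\tr(A)$. Set $V_i:=b^{-1}(J_i)$. For all $\tau$, $\tau(g_i(b))>\tau(g_i(a))-\epsilon'\ge c_0-\epsilon'>0$, so $g_i(b)\neq0$, i.e. $\mathrm{sp}(b)\cap J_i\neq\O$ and $V_i\neq\O$; the $V_i$ are disjoint since the $J_i$ are; and $\mu_\tau(V_i)=\tau(\mathbf 1_{V_i}(b))\le\tau(h_i(b))<\tau(h_i(a))+\epsilon'<n\delta_1+\epsilon'$. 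Hence $\sup_\tau\sum_i\mu_\tau(V_i)<n(n\delta_1+\epsilon')<\delta$ once $\delta_1$ and $\epsilon'$ are chosen small, which proves the claim and the lemma.

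\smallskip
\emph{Main obstacle.} The delicate point is exactly the production of the $V_i$: it cannot be done inside $D$ alone. Indeed, ``$X$ has no isolated points'' does not force the family $\{\mu_\tau\}$ to admit small open sets (e.g. an embedding $\mathrm C([0,1])\hookrightarrow\mathcal Z$ through a point evaluation restricts the trace to a Dirac mass), so Property (E) together with simplicity — which supplies a positive element of $A$ uniformly small in trace yet with spectrum meeting each $J_i$, and also faithfulness of traces — is essential. The one nontrivial external input I rely on is the standard fact that a unital simple infinite-dimensional C*-algebra has nonzero positive elements of arbitrarily small uniform trace (arbitrarily large multiplicity dominated by $1_A$ in $\mathrm{Cu}(A)$); the rest is Tietze, the pasting lemma, the absence of isolated points, and bookkeeping of the $\eps$'s. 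A secondary subtlety worth flagging is that Property (E) controls only the trace distribution of $b$, not $\norm{b-g}_{2,\tr(A)}$ (there is no control of cross terms such as $\tau(bg)$), which is precisely why the perturbation must be grafted back onto the given $g$ over the $V_i$ rather than taking $\tilde g:=b$.
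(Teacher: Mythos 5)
Your proof is essentially the same as the paper's: simplicity and non‑elementarity give mutually orthogonal positive elements of $A$ with uniformly small $\mathrm d_\tau$, from which you assemble a contraction $a$ whose tracial spectral distribution has controlled (small but positive) mass on certain intervals; Property (E) transports this to an element of $D$, yielding pairwise disjoint nonempty open sets $V_i \subseteq X$ of uniformly small $\mu_\tau$; and the absence of isolated points lets you graft onto $g$, over the $V_i$, a piece that plants $x_i$ together with a left‑approaching sequence into the spectrum. The paper's implementation differs only cosmetically: it uses the reference points $i/n$ rather than $x_i - \delta_0/2$, produces the small open sets as $U_i = b_i^{-1}((0,1])$ for mutually orthogonal $b_i \in D$ rather than as preimages of intervals under a single $b$, and writes $\tilde g = gr + \sum_i x_i g_i$ with a cut‑off $r$ in place of your Tietze/pasting construction.

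Two small slips, neither affecting the soundness of the approach. First, your conditions on $h_i$ are inconsistent: continuity together with $\mathbf 1_{J_i} \le h_i \le 1$ (with $J_i = (x_i - \delta_0, x_i)$) forces $h_i(x_i) = 1$, contradicting $\{h_i > 0\} \subseteq (x_i - 2\delta_0, x_i)$. This is harmless, since the trace estimate $\tau(h_i(a)) < n\delta_1$ uses only $h_i(0)=0$ and $\|h_i\|\le 1$ (disjointness of the supports of the $h_i$ is never needed); allow $h_i$ to straddle $x_i$, or simply take the $J_i$ to be arbitrary pairwise disjoint open subintervals of $(0,1]$ — the relation of $J_i$ to $x_i$ is never used in the reduction step. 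Second, $\eps' = \min\{c_0,\delta\}/2$ is too large to conclude $n(n\delta_1 + \eps') < \delta$: already $n\eps'$ can be as large as $n\delta/2 \ge \delta$ for $n \ge 2$. Take instead, e.g., $\eps' \le \min\{c_0/2,\ \delta/(2n^2)\}$ and $\delta_1 \le \delta/(2n^2)$.
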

\begin{proof}
Since $A$ is simple (and non-elementary), there are mutually orthogonal positive elements $a_1, ..., a_n \in A$ such that $$\norm{a_i} = 1\quad  \mathrm{and} \quad \mathrm{d}_\tau(a_i) < \eps/n^2,\quad i=1, ..., n,\ \tau \in\tr(A). $$ (See, for instance, Lemma 4.7 of \cite{Niu-MD-Z}.)

Consider the contraction $$a= \frac{1}{n} a_1 + \cdots + \frac{n}{n} a_n,$$ and note that it has the property 
$$\quad 0 < \tau(h_i(a)) < \eps/n,\quad i=1, ..., n,\ \tau \in\tr(A),$$ where 
$h_i: [0, 1] \to [0, 1]$ is the continuous function taking value $1$ at $[(4i-1)/4n, (4i+1)/4n]$, $0$ on $[0, (2i-1)/2n]$ and $[(2i+1)/2n, 1]$, and linear between. By Property (E), there is a positive contraction $d \in D$ such that
$$\quad 0 < \tau(h_i(d)) < \eps/n,\quad i=1, ..., n,\ \tau \in\tr(A),$$
and there are mutually orthogonal positive elements $b_1, ..., b_n \in D$ such that
$$ \norm{b_i} = 1 \quad \mathrm{and} \quad \mathrm{d}_\tau(b_i)<\eps/n,\quad i=1, ..., n,\ \tau \in\tr(A). $$

Consider the sets  $$U_i = b^{-1}_i((0, 1]) \quad \mathrm{and} \quad V_i = b_i^{-1}((1/2, 1]),\quad i=1, ..., n. $$ Then
$$\overline{V_i} \subseteq U_i \quad\mathrm{and} \quad \mu_\tau(U_i) < \eps/n,\quad i=1, ..., n,\ \tau \in\tr(A).$$
For each $V_i$, $i=1, ..., n$, since $X$ has no isolated points, there is a continuous function $g_i: X \to [0, 1]$ such that $ g_i|_{V_i^c} = 0$ and $1$ is not isolated from the left in $g_i(X)$ (i.e., $(s, 1) \cap g_i(X) \neq \O$ for all $s<1$). Also pick a continuous function $r: X \to [0, 1]$ such that $$r|_{X \setminus (\bigcup_{i=1}^n U_i)} = 1\quad \mathrm{and}\quad r|_{\bigcup_{i=1}^n \overline{V_i}} = 0.$$
Then the function 
$$ \tilde{g}: = g r + (x_1g_1 + x_2g_2 + \cdots + x_ng_n)$$
has the desired property.
\end{proof}

We are now ready to prove the main theorem of the paper, which states that Property (S) of $A$ implies the (SBP) of $(D, \tr(A))$ if Properties (C) and (E) are present.

\begin{thm}\label{prop-S}
Let $A$ be a unital simple C*-algebra, and let $D = \mathrm{C}(X) \subseteq A$ be a unital commutative sub-C*-algebra such that $X$ has no isolated points. Assume that the pair $(D, A)$ has Properties (C) and (E). Then, if the C*-algebra $A$ has Property (S), the pair $(D, \tr(A))$ has the (SBP).
\end{thm}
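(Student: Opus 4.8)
The plan is to verify the criterion of Theorem~\ref{SBP-2-norm} for the pair $(X, \Delta)$ with $\Delta = \tr(A)|_D$, viewed as a compact set of Borel probability measures on $X$. So fix a real-valued continuous function $f \in D$ and $\eps > 0$; the goal is to produce a real-valued $g \in D$ with $\norm{f-g}_{2,\Delta} < \eps$ and $\tau_\mu(\chi_\delta(g)) < \eps$ for all $\mu \in \Delta$ and some $\delta > 0$. By translating and rescaling we may assume $f$ is a self-adjoint contraction, and it suffices to treat positive contractions, since any self-adjoint element can be written (up to uniform-trace-norm error) using its positive and negative parts together with a partition of $X$ into the sets where $f$ is positive and negative, and a small neighbourhood of $0$ can be absorbed into the error. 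So assume $0 \le f \le 1$.

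\textbf{Step 1: transport Property (S) into $D$ via Property (E).} By Property (S) of $A$, there is a self-adjoint $a \in A$ with $\norm{f - a}_{2,\tr(A)} < \eps/3$ (so $a$ can be taken a contraction) and $\delta_0 > 0$ with $\tau(\chi_{\delta_0}(a)) < \eps/3$ for all $\tau \in \tr(A)$. Apply Property (E) of $(D,A)$ to the positive contraction $(a+1)/2$ (or work directly with the spectral data of $a$) with a finite subset $\mathcal F \subseteq \mathrm C([0,1])$ chosen to detect both the second moment $t \mapsto t^2$ (to control $\norm{\cdot}_{2,\tr(A)}$) and a sufficiently fine family of bump functions approximating $\chi_{\delta_0}$: this yields a positive contraction $b \in D$, i.e.\ a real-valued $b' \in D$ after the affine change of variable, with $\abs{\tau(h(a)) - \tau(h(b'))}$ small for all $h \in \mathcal F$ and all $\tau \in \tr(A)$. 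Consequently $\norm{f - b'}_{2,\Delta}$ is small and $\tau_\mu(\chi_{2\delta_0}(b')) < \eps/2$ for all $\mu \in \Delta$. At this point we have the ``small-boundary'' function inside $D$, but we have \emph{lost control} of how close it is to $f$ in a pointwise sense — it only matches $f$ in its trace distribution, not as an element of $D$.

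\textbf{Step 2: twist $b'$ inside $D$ to approximate $f$, using Property (C) and the sandwich lemma.} This is the heart of the argument and where Lemma~\ref{D-sandwich}, Lemma~\ref{hereditary-sets-tr-app}, Lemma~\ref{hereditary-sets-preserve-tr}, and Lemma~\ref{dense-sp-pert} all come in. The idea is to cut the interval $[0,1]$ into $N$ slices via the functions $a_i = \chi_i(f)$ of Lemma~\ref{hereditary-sets-tr-app}, so that $f \approx \frac1N(a_1 + \cdots + a_N)$ with the $a_i$ forming an increasing ``staircase'' of open supports in $X$. Writing $\phi_1 = a_{i-1}$, $\phi_2,\phi_3$ as further cutdowns, $\psi_1 = $ a spectral piece of (a perturbed copy of) $b'$, and $\eta_{\eps_0}$ as in \eqref{defn-eta}, one checks — using simplicity of $A$ and the $O(\eps)$-matched trace data from Step~1, after first applying Lemma~\ref{dense-sp-pert} to make the relevant spectral cutdowns of $b'$ well-behaved (no isolated points in the spectrum from the left, so $\mathrm d_\tau$ of the cutdowns is comparable to the corresponding $\tau$-values) — that the rank/dimension hypotheses \eqref{cond-s-1}--\eqref{cond-s-5} of Lemma~\ref{D-sandwich} hold for each slice. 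Lemma~\ref{D-sandwich} then gives, for each $i$, a contraction $u_i \in A$ that is almost a $D$-normalizer (almost unitary, $\mathrm{dist}_{2,\tr(A)}(u_i d u_i^*, (D)_1)$ and $\mathrm{dist}_{2,\tr(A)}(u_i^* d u_i, (D)_1)$ small) and conjugates a spectral piece of $b'$ into $\overline{\phi_1 A \phi_1}$, into $\overline{\phi_2 A \phi_2}$, and so on — i.e., it places a copy of $b'$ (or rather, a family of pieces of $b'$, one per slice) inside the hereditary algebras generated by the staircase functions $a_i$, in the correct nested fashion. Because the $u_i$ are almost $D$-normalizers, the conjugated pieces $d_i := u_i^* \psi_1 u_i$ are within $\eps$ (in $\norm{\cdot}_{2,\tr(A)}$) of positive contractions in $D$, and by construction they satisfy the nesting conditions \eqref{here-cond-1-a-tr}, \eqref{here-cond-1-b-tr} of Lemma~\ref{hereditary-sets-tr-app} relative to the $a_i$. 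Then Lemma~\ref{hereditary-sets-tr-app} yields $\norm{f - \frac1N(d_1 + \cdots + d_N)}_{2,\tr(A)} < \eps$, so $g := \frac1N(d_1' + \cdots + d_N')$ — where $d_i' \in D$ approximates $d_i$ in uniform trace norm — lies in $D$ and satisfies $\norm{f - g}_{2,\Delta} < $ (something controllable). Meanwhile, since each $d_i$ is a conjugate of a piece of $b'$ and the conjugators are almost $D$-normalizers preserving traces, Lemma~\ref{hereditary-sets-preserve-tr} (applied with $b_i = a_i$, $\chi = \chi_\delta$) shows $\tau(\chi_\delta(g)) \approx \tau(\chi_\delta(\frac1N(a_1 + \cdots + a_N)))$, which is controlled by the small-boundary behaviour of $b'$ from Step~1 — more precisely, the spectral mass of $g$ near $0$ is dominated by the spectral mass of $b'$ near $0$, which was made $< \eps/2$. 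For a suitable small $\delta > 0$ this gives $\tau_\mu(\chi_\delta(g)) < \eps$ for all $\mu \in \Delta$.

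\textbf{Step 3: conclude via Theorem~\ref{SBP-2-norm}.} Having produced, for arbitrary $(f,\eps)$, a real-valued $g \in D$ with $\norm{f-g}_{2,\Delta} < \eps$ and a $\delta > 0$ with $\tau_\mu(\chi_\delta(g)) < \eps$ for all $\mu \in \Delta$, Theorem~\ref{SBP-2-norm} gives that $(X,\Delta) = (D, \tr(A))$ has the (SBP), as desired. \textbf{The main obstacle} is Step~2: one must simultaneously (a) arrange the combinatorics of the nested staircase so that each slice genuinely satisfies the dimension inequalities feeding Lemma~\ref{D-sandwich} — this requires the $O(\eps)$ bookkeeping relating $\mathrm d_\tau$ of spectral cutdowns of $b'$ to the matched trace values, which is exactly why Lemma~\ref{dense-sp-pert} (density of spectrum, no left-isolated points) and simplicity of $A$ are needed — and (b) carry the error terms coherently through the three successive conjugations inside each application of Lemma~\ref{D-sandwich} \emph{and} through the sum over the $N$ slices, so that the final $\norm{\cdot}_{2,\tr(A)}$-errors and the final near-$0$ spectral mass are both $< \eps$; this forces a careful a priori choice of the order of quantifiers ($N$ first, then the tolerances $\delta$ in Lemmas~\ref{hereditary-sets-tr-app} and~\ref{hereditary-sets-preserve-tr}, then the internal $\eps', \eps'', \eps'''$ of Lemma~\ref{D-sandwich}, and only last the approximation quality in Step~1).
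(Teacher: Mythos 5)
Your high-level architecture --- Property (S) to produce the approximant in $A$, Property (E) to transport its trace distribution into $D$, the sandwich Lemma~\ref{D-sandwich} (powered by Property (C)) to twist the $D$-element onto a staircase of $f$, then Lemmas~\ref{hereditary-sets-tr-app} and~\ref{hereditary-sets-preserve-tr} to assemble the pieces, and Theorem~\ref{SBP-2-norm} to conclude --- is exactly the paper's. However, two steps as written do not go through. First, the reduction ``a small neighbourhood of $0$ can be absorbed into the error'' is circular: the $\norm{\cdot}_{2,\Delta}$-size of the spectral mass of $f$ near $0$ is precisely the quantity $\tau(\chi_\delta(f))$ that the (SBP) criterion is supposed to control, so you cannot absorb it a priori. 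The paper instead rescales $[-1,1]$ affinely onto $[0,1]$ so that $0 \mapsto \tfrac{1}{2}$ and tracks spectral mass near $\tfrac{1}{2}$ via $\chi_{\frac{1}{2},\delta}$, which cleanly sidesteps this. Relatedly, your claim in Step 1 that ``$\norm{f - b'}_{2,\Delta}$ is small'' is false (you contradict it one sentence later); Property (E) matches $\tau(t\mapsto t^2)$ but gives no control of the cross term $\tau(ab')$ in $\norm{a-b'}_{2,\tau}^2$.

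Second, and more substantively, applying Lemma~\ref{hereditary-sets-preserve-tr} with $b_i = a_i = \chi_i(f)$ is the wrong instantiation. That choice would yield $\tau(\chi_\delta(g)) \approx \tau(\chi_\delta(f))$, which is useless: we have no bound on the spectral mass of $f$ near $0$ --- that is the input, not something known to be small. Moreover the lemma's moment-matching hypothesis $\abs{\tau(b_i^j) - \tau(d_i^j)} < \delta$ would then require $f$ and the $E$-transplanted element ($\tilde{\tilde g}$ in the paper, your $b'$) to have matching trace moments, which is not given. The correct choice is $b_i = \chi_{i,\eps_1}(\tilde{\tilde g})$, the \emph{untwisted} pieces of the $E$-element, with $d_i = u_i^* b_i u_i$; then moment matching follows because the $u_i$ are almost tracial unitaries, and the conclusion compares $\tau(\chi_\delta(g))$ to $\tau(\chi_\delta((\tilde{\tilde g} - \eps_1)_+))$, which is exactly the quantity Property (S) makes small. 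This is the one place where the bookkeeping must feed in the right object: as you have it, $g$ would inherit the unknown spectral density of $f$ near $0$ instead of the controlled spectral density of $\tilde{\tilde g}$.
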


\begin{proof}
By Theorem \ref{SBP-2-norm}, it is enough to show that for any self-adjoint contraction $f \in D$ and any $\eps>0$,  there is a self-adjoint element $g \in D$ such that
\begin{enumerate}
\item $\norm{f - g }_{2, \tr(A)} < \eps$, and
\item there is $\delta>0$ such that $\tau(\chi_\delta(g)) < \eps$, $\tau \in \tr(A)$.
\end{enumerate}

To show this statement, it is enough to prove it for $f$ such that $\mathrm{sp}(f) = [-1, 1]$. Indeed, set 
$$t_- = \sup\{t < 0: t \notin \mathrm{sp}(h)\} \quad \mathrm{and} \quad  t_+ = \inf\{t> 0: t \notin \mathrm{sp}(h)\}.$$

If $t_- = 0$ or $t_+ = 0$, then it is straightforward to perturb $f$ to produce $g$ (with $\chi_\delta(g) = 0$). Assume neither of $t_-$ and $t_+$ is zero. Choose $s_-,\  s_+ \notin \mathrm{sp}(f)$ such that $$0\leq t_- - s_- < \min\{\eps, -t_-\} \quad \mathrm{and} \quad 0\leq s_+ - t_+ < \min\{\eps, t_+\} $$ and consider the self-adjoint element $h(f)$ where
$$ h = \left\{
\begin{array}{ll}
0, & t < t_-, \\
t_- - s_-, & t \in [t_-, s_-], \\
t, & t \in [t_-, t+-], \\
s_+ - s_-, & t \in [t_+, s_+],\\
0 & t > s_+.
 \end{array}
\right. $$
Then $\mathrm{sp}(h(f)) = [t_-, t_+]$, and $$\norm{f - (f^-_{s_-} + h(f) +f^+_{s_+} )} < \eps,$$
where $f^-_{s_-} (t)= t $ if $t < s_-$ and $f^-_{s_-} (t)= 0$ otherwise, and $ f^+_{s_+} $ is defined similarly. Then, applying the statement to the self-adjoint element $h(f)$, one obtains the desired approximation $g$. 

Now, let us assume that $\mathrm{sp}(f) = [-1, 1]$. Identifying $[-1, 1]$ with $[0, 1]$, let us show the following (equivalent) statement: 

Let $f \in D$ be a positive contraction with $\mathrm{sp}(f) = [0, 1]$, and let $\eps>0$. Then there is a positive contraction $g \in D$ such that
\begin{equation}\label{positive-s1}
\norm{f - g }_{2, \tr(A)} < \eps,
\end{equation} 
and there is $\delta>0$ such that
\begin{equation}\label{positive-s2}
\tau(\chi_{\frac{1}{2}, \delta}(g)) < \eps,\quad \tau \in \tr(A),
\end{equation}
where 
$$ \chi_{\frac{1}{2}, \delta}(t) = \left\{
\begin{array}{ll}
0, & t < \frac{1}{2} - \delta, \\
\mathrm{linear}, & t \in [\frac{1}{2} - \delta, \frac{1}{2} - \frac{\delta}{2}],\\
1, & t \in [\frac{1}{2} - \frac{\delta}{2}, \frac{1}{2} + \frac{\delta}{2}],\\
\mathrm{linear}, & t \in [\frac{1}{2} + \frac{\delta}{2}, \frac{1}{2} + {\delta}],\\
0, & t > \frac{1}{2} + \delta.
\end{array}
\right. $$

Let $(f, \eps)$ be given. Applying Lemma \ref{hereditary-sets-tr-app} to $\eps/2$ (in place of $\eps$), we obtain $N$ and  $\eps_0$ (in place of $\delta$). Choose $\eps_1>0$ such that $$3\eps_1< 1/N,\quad 8N\eps_1 < \eps_0, \quad \mathrm{and} \quad  \eps_1 < \eps/4.$$

For each $i=1, 2, ..., N,$ consider the following functions
$$\chi_i(t) = \left\{ 
\begin{array}{ll}
0, & t \leq \frac{i-1}{N}, \\
\mathrm{linear}, & t \in [\frac{i-1}{N}, \frac{i}{N}], \\
1, & t \geq \frac{i}{N},
\end{array} 
\right.
\quad
\chi_{i, \eps_1}(t) = \left\{ 
\begin{array}{ll}
0, & t \leq \frac{i-1}{N} + \eps_1, \\
\mathrm{linear}, & t \in [\frac{i-1}{N} + \eps_1, \frac{i}{N}+\eps_1], \\
1, & t \geq \frac{i}{N}+\eps_1,
\end{array} 
\right.
$$

$$\kappa_{i, \eps_1}(t) = 
\left\{ 
\begin{array}{ll}
0, & t \leq \frac{i-1}{N} + \frac{\eps_1}{2}, \\
\mathrm{linear}, & t \in [\frac{i-1}{N} +\frac{\eps_1}{2}, \frac{i-1}{N} +\eps_1], \\
1, & t \geq \frac{i-1}{N} + \eps_1,
\end{array}
\right.
\quad
\theta_{i, \eps_1} = 
\left\{
\begin{array}{ll}
0, & t \leq \frac{i}{N} + \eps_1, \\
\textrm{linear}, & t \in [\frac{i}{N} + \eps_1, \frac{i}{N} + 2\eps_1], \\
1, & t \geq \frac{i}{N} + 2 \eps_1,
\end{array}
\right.
$$

$$
\xi_{i, \eps_1}^{+} = 
\left\{
\begin{array}{ll}
0, & t \leq \frac{i}{N}, \\
\textrm{linear}, & t \in [\frac{i}{N}, \frac{i}{N} + \frac{\eps_1}{2}], \\
1, & t \geq \frac{i}{N} + \frac{\eps_1}{2},
\end{array}
\right.
\quad 
\mathrm{and}
\quad
\xi_{i, \eps_1}^{-} = 
\left\{
\begin{array}{ll}
0, & t \leq \frac{i}{N} + 2\eps_1, \\
\textrm{linear}, & t \in [\frac{i}{N} + 2\eps_1, \frac{i}{N} + 3\eps_1], \\
1, & t \geq \frac{i}{N} + 3\eps_1.
\end{array}
\right.
$$

Consider the finite set of functions 
\begin{equation}
\mathcal H = \{\chi_i,\  \chi_{i, \eps_1},\ \kappa_{i, \eps_1},\ \theta_{i, \eps_1},\ \eta_{\eps_1/2}\circ \chi_{i, \eps_1}:\ i=1, 2, ..., N\}.
\end{equation}
Note that $$\eta_{\eps_1/2}(\chi_{i, \eps_1}(t)) = 0,\quad t \leq i/N+\eps_1/2,\ i=1, ..., N,$$
where, recall (\eqref{defn-eta}),
$$
\eta_\eps(t) = \left\{
\begin{array}{ll}
0, & t \leq 1-\eps, \\
\mathrm{linear}, & t \in [1-\eps, 1-\eps/2], \\
1, & t \geq 1-\eps/2.
\end{array}
\right.
$$

Since $A$ is simple and $\mathrm{sp}(f) = [0, 1]$, there is $\gamma > 0$ such that
\begin{eqnarray}\label{small-gap}
\gamma & < & \frac{1}{4}\min\{d_\tau(\chi_i(f)) - \tau(\kappa_{i, \eps_1}(f)), \ \tau(\theta_{i, \eps_1}(f)) - \mathrm{d}_\tau(\xi_{i, \eps_1}^-(f)),\\
& & \quad \quad \quad  \tau(\xi^+_{i, \eps_1}(f)) - \tau(\eta_{\eps_1/2}(\chi_{i, \eps_1}(f))): i=1, ..., N,\ \tau\in\tr(A)\}. \nonumber 
\end{eqnarray}
Without loss of generality, one may assume that $\gamma < \eps/4$.

Since $A$ has Property (S), there is a positive contraction $\tilde{g} \in A$ such that $\norm{f - \tilde{g}}_{2, \tr(A)}$ is sufficiently small
that
\begin{equation}\label{1-pert}
\abs{ \tau(\chi(f) - \tau(\chi(\tilde{g}))} < \gamma, \quad \chi \in \mathcal H \subseteq \mathrm{C}([0, 1]),\ \tau \in \tr(A),
\end{equation}
and there is $\delta>0$ such that
\begin{equation}\label{1-small-bd}
\tau(\chi_{\frac{1}{2}, \delta}(\tilde{g})) < \eps/4,\quad \tau \in \tr(A).
\end{equation}
(Note that the choice of $\delta$ depends on $\tilde{g}$.)

Applying Lemma \ref{hereditary-sets-preserve-tr} to $\eps/4$, $N$, and $\chi_{\frac{1}{2}+\eps_1, \delta}$ (in place of $\eps$, $N$, and $\chi$, respectively), one obtains $\delta_0$ (in place of $\delta$) and $M$ which have the property specified in Lemma \ref{hereditary-sets-preserve-tr} with respect to $\eps/2$, $N$, and $\chi_{\frac{1}{2}+\eps_1, \delta}$. Choose $\delta_1>0$ such that
$$4\delta_1 < \eps_0,\quad 6\delta_1 < \delta_0, \quad \mathrm{and} \quad 3^N\delta_1 < \min\{\eps_1, \gamma/2\} . $$

Also choose $\delta_2>0$ such that if $a, b$ are positive contractions of a C*-algebra $A$, then
\begin{equation}\label{choice-delta-2}
 \norm{a - b}_{2, \tr(A)} < \delta_2 \quad \Longrightarrow \quad \norm{\chi_{\frac{1}{2} + \eps_1, \delta}(a)) - \chi_{\frac{1}{2} + \eps_1, \delta}(b)}_{2, \tr(A)} < \eps/4.
\end{equation}

Since $(D, A)$ has Property (E), there is a positive contraction $\tilde{\tilde{g}} \in D $ such that
\begin{equation}\label{2-pert}
\abs{\tau(\chi(\tilde{g})) - \tau(\chi(\tilde{\tilde{g}}))} < \gamma, \quad  \chi \in \mathcal H \cup \{ \chi_{\frac{1}{2}, \delta}\},\ \tau \in \tr(A).
\end{equation}
In particular, together with \eqref{1-pert} and \eqref{1-small-bd}, one has 
\begin{equation}\label{2-small-bd}
\abs{ \tau(\chi(f)) - \tau(\chi(\tilde{\tilde{g}})) } < 2 \gamma,\quad \chi \in \mathcal H,\ \tau \in \tr(A),
\end{equation}
and
\begin{equation}
\tau(\chi_{\frac{1}{2}, \delta}(\tilde{\tilde{g}})) < \eps/4+ \gamma < \eps/2.
\end{equation}
So
\begin{equation}\label{small-strech}
 \tau(\chi_{\frac{1}{2}+\eps_1, \delta}((\tilde{\tilde{g}} - \eps_1)_+))  < \eps/2. 
 \end{equation}

By Lemma \ref{dense-sp-pert}, after a small perturbation (with respect to $\norm{\cdot}_{2, \tr(A)}$), without loss of generality, one may assume that the numbers 
$$i/N + \eps_1,\quad i=1, 2, ..., N-1,$$
are in $\mathrm{sp}(\tilde{\tilde{g}})$, and are  not isolated from the left.

Now, let us consider the elements
$$\chi_1(f),\ \chi_2(f),\ ...,\ \chi_N \in D$$
and
$$\chi_{1, \eps_1}(\tilde{\tilde{g}}),\ \chi_{1, \eps_1}(\tilde{\tilde{g}}),\ ...,\ \chi_{1, \eps_1}(\tilde{\tilde{g}}) \in D.$$

By \eqref{2-pert}, one has
\begin{equation}
\mathrm{d}_\tau(\chi_{1, \eps_1}(\tilde{\tilde{g}})) < \tau(\kappa_{1, \eps_1}(\tilde{\tilde{g}})) \approx_{2 \gamma} \tau(\kappa_{1, \eps_1}(f)) < \mathrm{d}_\tau(\chi_1(f)),\quad \tau\in \tr(A).
\end{equation}

By the choice of $\gamma$ (\eqref{small-gap}), one has
\begin{equation}
\mathrm{d}_\tau(\chi_{1, \eps_1}(\tilde{\tilde{g}})) < \mathrm{d}_\tau(\chi_1(f)),\quad \tau\in\tr(A).
\end{equation}

Note that, by the construction of $\xi_{1, \eps_1}^+$, $\xi_{1, \eps_1}^-$, and $\theta_{1, \eps_1}$, we have 
$$   \xi_{1, \eps_1}^+(f),\ \xi_{1, \eps_1}^-(f) \in \overline{\chi_1(f) D \chi_1(f) },  $$

$$ \chi_{1, \eps_1}(\tilde{\tilde{g}}) \theta_{1, \eps_1}(\tilde{\tilde{g}}) = \theta_{1, \eps_1}(\tilde{\tilde{g}}),\quad \xi_{1, \eps_1}^+(f) \xi_{1, \eps_1}^-(f) = \xi_{1, \eps_1}^-(f),  $$

$$ \tau(\theta_{1, \eps_1}(\tilde{\tilde{g}})) \approx_{2\gamma} \tau(\theta_{1, \eps_1}(f))  >   \mathrm{d}_\tau(\xi_{1, \eps_1}^-( f )),\quad \tau \in \tr(A), $$
and
$$ \tau(\eta_{\eps_1/2}(\chi_{1, \eps_1}(\tilde{\tilde{g}}))) \approx_{2 \gamma} \tau(\eta_{\eps_1/2}(\chi_{1, \eps_1}(f))) < \tau( \xi_{1, \eps_1}^+(f)) < \mathrm{d}_\tau( \xi_{1, \eps_1}^+(f)),\quad \tau\in \tr(A). $$

By Lemma \ref{D-sandwich}, for any $\delta''>0$ (to be fixed later), there is a contraction $u_1 \in A$ such that
\begin{equation}\label{u-1-cond-1}
 u_1^*\chi_{1, \eps_1}(\tilde{\tilde{g}})u_1 \in^{\norm{\cdot}_2}_{\delta''}   \overline{\chi_1(f)A\chi_1(f)},
 \end{equation}
\begin{equation}\label{u-1-cond-2}
\eta_{\eps_1/4}(u^*_1 \chi_{1, \eps_1}(\tilde{\tilde{g}}) u_1) \in^{\norm{\cdot}_2}_{\delta''} \overline{\xi_{1, \eps_1}^+(f) A \xi_{1, \eps_1}^+(f)},
\end{equation}
\begin{equation}\label{u-1-cond-3}
 \eta_{\delta''}(u_1^*\chi_{1, \eps_1}(\tilde{\tilde{g}})u_1) \xi_{1, \eps_1}^-(f)  \approx_{\delta''}^{\norm{\cdot}_2} \xi_{1, \eps_1}^-(f),
 \end{equation}
\begin{equation}\label{u-1-cond-4}
\mathrm{dist}_{2, \tr(A)}(u_1du_1^*, (D)_1) < \delta'',\quad \mathrm{dist}_{2, \tr(A)}(u_1^*du_1, (D)_1) < \delta'',\quad d \in (D)_1,
\end{equation}
and
\begin{equation}\label{u-1-cond-5}
\norm{u_1u_1^* - 1}_{2, \tr(A)},\  \norm{u_1^*u_1 - 1}_{2, \tr(A)} < \delta''.
\end{equation}

With $\delta''$ sufficiently small, one has  
\begin{equation}\label{u-1-match-tr-p}
\abs{\tau((u_1^* x u_1)^j) - \tau(x^j)} < \delta_1,\quad j=1, ..., M,\  x\in (A)_1,\ \tau \in \tr(A),
\end{equation}
and (by \eqref{u-1-cond-4} and \eqref{u-1-cond-5})
\begin{equation}\label{u-1-close-D}
\mathrm{dist}_{2, \tr(A)}(u_1^*\chi_{1, \eps_1}(\tilde{\tilde{g}}) u_1, (D)^+_1) < 3\delta''< \min\{\eps_0, \delta_2, \eps/4\}.
\end{equation}

Set 
$$ \rho_1:=\min\{\tau(\rho_{1, \delta_1}(\tilde{\tilde{g}})): \tau \in \tr(A)\},$$
where
$$\rho_{1, \delta_1} = \left\{
\begin{array}{ll}
0, & t\leq \frac{1}{N}+\eps_1-\delta_1, \\
\mathrm{linear}, & \frac{1}{N} + \eps_1-\delta_1 \leq t \leq \frac{1}{N} + \eps_1-\delta_1/2, \\
1, & t = \frac{1}{N} + \eps_1-\delta_1/2, \\
\mathrm{linear}, & \frac{1}{N} + \eps_1-\delta_1/2 \leq t \leq \frac{1}{N} + \eps_1, \\
0, & t \geq \frac{1}{N}+\eps_1.
\end{array}
\right. 
$$
Since $1/N + \eps_1$ is not isolated from the left in $\mathrm{sp}(\tilde{\tilde{g}})$ and $A$ is simple, we have $\rho_1>0$.

By \eqref{u-1-close-D}, there is a positive contraction
$$ [u^*_1\chi_{1, \eps_1}(\tilde{\tilde{g}}) u_1] \in D $$
such that
\begin{equation}\label{down-to-D-1}
 \norm{ [u^*_1\chi_{1, \eps_1}(\tilde{\tilde{g}}) u_1] - u^*_1\chi_{1, \eps_1}(\tilde{\tilde{g}}) u_1 }_{2, \tr(A)} < 3\delta''<\delta_1. 
\end{equation}
With $\delta''$ sufficiently small, one has
$$\norm{ \eta_{\delta_1}([u^*_1\chi_{1, \eps_1}(\tilde{\tilde{g}}) u_1]) - u^*_1\eta_{\delta_1}(\chi_{1, \eps_1}(\tilde{\tilde{g}})) u_1 }_{2, \tr(A)} < \min\{\rho_1/8, \delta_1 \}.$$
Define
\begin{equation}\label{defn-down-to-D-m}
[u^*_1\eta_{\delta_1}(\chi_{1, \eps_1}(\tilde{\tilde{g}})) u_1] := \eta_{\delta_1}([u^*_1\chi_{1, \eps_1}(\tilde{\tilde{g}}) u_1]) \in D.
\end{equation}
Then
\begin{equation}\label{down-to-D-2}
 \norm{ [u^*_1\eta_{\delta_1}(\chi_{1, \eps_1}(\tilde{\tilde{g}})) u_1] - u^*_1\eta_{\delta_1}(\chi_{1, \eps_1}(\tilde{\tilde{g}})) u_1 }_{2, \tr(A)} < \min\{ \rho_1/8, \delta_1\}.
 \end{equation}
 Moreover (by \eqref{defn-down-to-D-m}), 
 \begin{equation}\label{down-to-D-3}
 \eta_{2\delta_1}([u^*_1\chi_{1, \eps_1}(\tilde{\tilde{g}}) u_1]) [u^*_1\eta_{\delta_1}(\chi_{1, \eps_1}(\tilde{\tilde{g}})) u_1]  = [u^*_1\eta_{\delta_1}(\chi_{1, \eps_1}(\tilde{\tilde{g}})) u_1]. 
 \end{equation}
 
 One should assume $\delta''$ is sufficiently small  that
 \begin{equation}\label{down-to-D-4} 
 \norm{ \eta_{\eps_1/4}([u_1^*\chi_{1, \eps_1}(\tilde{\tilde{g}})u_1]) - \eta_{\eps_1/4}(u_1^*\chi_{1, \eps_1}(\tilde{\tilde{g}})u_1) }_{2, \tr(A)} < \delta_1,
 \end{equation}
and
 \begin{equation}\label{down-to-D-5} 
 \norm{ \eta_{2\delta_1}([u_1^*\chi_{1, \eps_1}(\tilde{\tilde{g}})u_1]) - \eta_{2\delta_1}(u_1^*\chi_{1, \eps_1}(\tilde{\tilde{g}})u_1) }_{2, \tr(A)} < \delta_1. 
 \end{equation}
 
Note that (use \eqref{u-1-cond-3} in the third and fifth steps), 
\begin{eqnarray*}
 (u_1^* \chi_{1, \eps_1}(\tilde{\tilde{g}}) u_1)\chi_2(f)
 & \approx_{3N\eps_1} & (u_1^* \chi_{1, \eps_1}(\tilde{\tilde{g}}) u_1)\chi_{2, 3\eps_1}(f) \\
 &=&  (u_1^* \chi_{1, \eps_1}(\tilde{\tilde{g}}) u_1) \xi_{1, \eps_1}^-(f)\chi_{2, 3\eps_1}(f) \\
 & \approx_{\delta''}^{\norm{\cdot}_2} & (u_1^* \chi_{1, \eps_1}(\tilde{\tilde{g}}) u_1) \eta_{\delta''}(u_1^*\chi_{1, \eps_1}(\tilde{\tilde{g}})u_1) \xi_{1, \eps_1}^-(f)\chi_{2, 3\eps_1}(f) \\
& \approx_{\delta''} & \eta_{\delta''}(u_1^*\chi_{1, \eps_1}(\tilde{\tilde{g}})u_1) \xi_{1, \eps_1}^-(f)\chi_{2, 3\eps_1}(f)\\
 & \approx_{\delta''}^{\norm{\cdot}_2} & \xi_{1, \eps_1}^-(f)\chi_{2, 3\eps_1}(f) \\
 & = & \chi_{2, 3\eps_1}(f) \approx_{3N\eps_1} \chi_2(f).
 \end{eqnarray*}
With $\delta''$ sufficiently small, one has 
\begin{equation}\label{id-down-1}
[u^*_1\chi_{1, \eps_1}(\tilde{\tilde{g}}) u_1]\chi_2(f) \approx^{\norm{\cdot}_2}_{7N\eps_1} \chi_2(f). 
\end{equation}


Note that, by \eqref{down-to-D-2}, 
\begin{equation}\label{left-nbhd-1}
 \abs{\tau( [u_1^* \eta_{\delta_1}( \chi_{1, \eps_1}(\tilde{\tilde{g}})) u_1] ) - \tau( u_1^* \eta_{\delta_1}( \chi_{1, \eps_1}(\tilde{\tilde{g}})) u_1 )} < \rho_1/8,\quad \tau \in\tr(A).
 \end{equation}

With $\delta''$ sufficiently small, one has
\begin{equation}\label{left-nbhd-2}
 \tau(\eta_{\delta_1}(\chi_{1, \eps_1}(\tilde{\tilde{g}}))) \approx_{\rho_1/8} \tau(u^*_1\eta_{\delta_1}(\chi_{1, \eps_1}(\tilde{\tilde{g}}))u_1),\quad \tau \in \tr(A), 
 \end{equation}
and
\begin{equation}\label{swap-u-1}
\norm{ \eta_{\delta_1}(u_1^* \chi_{1, \eps_1}(\tilde{\tilde{g}})u_1) - u_1^* \eta_{\delta_1}(\chi_{1, \eps_1}(\tilde{\tilde{g}}))u_1}_2 < \delta_1/4.
\end{equation}

Hence, by \eqref{left-nbhd-1} and \eqref{left-nbhd-2},
\begin{eqnarray*}
\mathrm{d}_\tau(\chi_{2, \eps_1}(\tilde{\tilde{g}})) & \leq &  \tau(\eta_{\delta_1}(\chi_{1, \eps_1}(\tilde{\tilde{g}}))) -\rho_1/2 \\
& \approx_{\rho_1/8} & \tau(u^*_1\eta_{\delta_1}(\chi_{1, \eps_1}(\tilde{\tilde{g}}))u_1) - \rho_1/2 \\
& \approx_{\rho_1/8} & \tau([u^*_1\eta_{\delta_1}(\chi_{1, \eps_1}(\tilde{\tilde{g}}))u_1]) - \rho_1/2  \\
& < & \mathrm{d}_\tau([u^*_1\eta_{\delta_1}(\chi_{1, \eps_1}(\tilde{\tilde{g}}))u_1]) - \rho_1/2,
\end{eqnarray*}
for all $\tau \in \tr(A)$,
and therefore
\begin{equation}\label{lem-cond-1}
\mathrm{d}_\tau(\chi_{2, \eps_1}(\tilde{\tilde{g}})) < \mathrm{d}_\tau([u^*_1\eta_{\delta_1}(\chi_{1, \eps_1}(\tilde{\tilde{g}}))u_1]),\quad \tau \in\tr(A). 
\end{equation}

By \eqref{u-1-cond-3} (and \eqref{down-to-D-2}, \eqref{swap-u-1}), (note that $3\eps_1 < 1/N$ and $\delta''\ll \delta_1$)
\begin{eqnarray}\label{left-nbhd-3}
  [u^*_1\eta_{\delta_1}(\chi_{1, \eps_1}(\tilde{\tilde{g}}))u_1] (\xi^+_{2, \eps_1}(f)) & = & [u^*_1\eta_{\delta_1}(\chi_{1, \eps_1}(\tilde{\tilde{g}}))u_1]  (\xi^-_{1, \eps_1}(f))   (\xi^+_{2, \eps_1}(f))  \\
  & \approx_{\delta_1}^{\norm{\cdot}_2}& (u^*_1\eta_{\delta_1}(\chi_{1, \eps_1}(\tilde{\tilde{g}}))u_1)  (\xi^-_{1, \eps_1}(f))   (\xi^+_{2, \eps_1}(f)) \nonumber \\ 
  & \approx_{\delta_1}^{\norm{\cdot}_2}& \eta_{\delta_1}((u^*_1(\chi_{1, \eps_1}(\tilde{\tilde{g}}))u_1))  (\xi^-_{1, \eps_1}(f))   (\xi^+_{2, \eps_1}(f))  \nonumber \\ 
  & \approx_{\delta''}^{\norm{\cdot}_2} & (\xi^-_{1, \eps_1}(f))   (\xi^+_{2, \eps_1}(f)) \nonumber  \\
  & = &   \xi^+_{2, \eps_1}(f), \nonumber
  \end{eqnarray}
and therefore, one also has 
\begin{equation}\label{left-nbhd-4}
  [u^*_1\eta_{\delta_1}(\chi_{1, \eps_1}(\tilde{\tilde{g}}))u_1] (\xi^-_{2, \eps_1}(f)) \approx^{\norm{\cdot}_2}_{3\delta_1} \xi^-_{2, \eps_1}(f). 
\end{equation}  

Now, fixing $\delta''$, we have the contraction $u_1 \in A$.

Let us inductively assume that contractions $u_1, ..., u_k$, where $k \leq N-1$, have been constructed such that (note that \eqref{k-approx-bak} and \eqref{k-match-tr} are void if $k=1$)
\begin{equation}\label{k-approx-close-D}
\mathrm{dist}_{2, \tr(A)}(u_i^*\chi_{i, \eps_1}(\tilde{\tilde{g}}) u_i, (D)^+_1) < \min\{\eps_0, \delta_2, \eps/4\},\quad i=1, ..., k, \quad \quad \textrm{(\eqref{u-1-close-D} when $k=1$)}
\end{equation}
\begin{equation}\label{k-approx-bak}
 \norm{ \chi_{i-1}(f) (u_{i}^* \chi_{i, \eps_1}(\tilde{\tilde{g}}) u_{i}) - (u_{i}^* \chi_{i, \eps_1}(\tilde{\tilde{g}}) u_{i}) }_{2, \tr(A)} < 4 \delta_1 < \eps_1,\quad i=1, ..., k,
\end{equation}
\begin{equation}\label{k-approx-for}
\norm{ (u_{i}^* \chi_{i, \eps_1}(\tilde{\tilde{g}}) u_{i}) \chi_{i+1}(f) - \chi_{i+1}(f) }_{2, \tr(A)} < 7 N \eps_1 + 3^i\delta_1,\quad i=1, ..., k, \quad \quad \textrm{(\eqref{id-down-1} when $k=1$)}
\end{equation}
and
\begin{equation}\label{k-match-tr}
\norm{ (u_{i-1}^* \chi_{i, \eps_1}(\tilde{\tilde{g}}) u_{i-1})(u_{i}^* \chi_{i, \eps_1}(\tilde{\tilde{g}}) u_{i}) - (u_{i}^* \chi_{i, \eps_1}(\tilde{\tilde{g}}) u_{i}) }_{2, \tr(A)} < 6\delta_1,\quad i=1, ..., k, 
\end{equation}
\begin{equation}\label{k-match-tr-p}
\abs{\tau((u_i^* x u_i)^j) - \tau(x^j)} < \delta_1,\quad i =1, ..., k,\ j=1, ..., M,\  x\in (A)_1,\ \tau \in \tr(A). \quad \quad \textrm{(\eqref{u-1-match-tr-p} when $k=1$)} 
\end{equation}

Moreover, the contraction $u_k$ satisfies  
\begin{equation}\label{k-u-1-cond-2}
\eta_{\eps_1/4}(u^*_k \chi_{k, \eps_1}(\tilde{\tilde{g}}) u_k) \in^{\norm{\cdot}_2}_{\delta_1} \overline{\xi_{k, \eps_1}^+(f) A \xi_{k, \eps_1}^+(f)}, \quad \quad \textrm{(\eqref{u-1-cond-2} when $k=1$; note that $\delta''<\delta$)}
\end{equation}
and there are positive contractions $[u^*_k\chi_{k, \eps_1}(\tilde{\tilde{g}}) u_k], [u^*_k\eta_{\delta_1}(\chi_{k, \eps_1}(\tilde{\tilde{g}})) u_k] \in D$ such that
\begin{equation}\label{k-down-to-D-3}
 \eta_{2\delta_1}([u^*_k\chi_{k, \eps_1}(\tilde{\tilde{g}}) u_k]) [u^*_k\eta_{\delta_1}(\chi_{k, \eps_1}(\tilde{\tilde{g}})) u_k]  = [u^*_k\eta_{\delta_1}(\chi_{k, \eps_1}(\tilde{\tilde{g}})) u_k], \quad \quad \textrm{(\eqref{down-to-D-3} when $k=1$)}
 \end{equation}
 \begin{equation}\label{k-down-to-D-4} 
 \norm{ \eta_{\eps_1/4}([u_k^*\chi_{k, \eps_1}(\tilde{\tilde{g}})u_k]) - \eta_{\eps_1/4}(u_k^*\chi_{k, \eps_1}(\tilde{\tilde{g}})u_k) }_{2, \tr(A)} < \delta_1, \quad \quad \textrm{(\eqref{down-to-D-4} when $k=1$)}
 \end{equation}
 \begin{equation}\label{k-down-to-D-5} 
 \norm{ \eta_{2\delta_1}([u_k^*\chi_{k, \eps_1}(\tilde{\tilde{g}})u_k]) - \eta_{2\delta_1}(u_k^*\chi_{k, \eps_1}(\tilde{\tilde{g}})u_k) }_{2, \tr(A)} < \delta_1, \quad \quad \textrm{(\eqref{down-to-D-5} when $k=1$)}
 \end{equation}
\begin{equation}\label{k-lem-cond-1}
\mathrm{d}_\tau(\chi_{k+1, \eps_1}(\tilde{\tilde{g}})) < \mathrm{d}_\tau([u^*_k\eta_{\delta_1}(\chi_{k, \eps_1}(\tilde{\tilde{g}}))u_k]), \quad \tau \in\tr(A), \quad \quad \textrm{(\eqref{lem-cond-1} when $k=1$)}
\end{equation}
\begin{equation}\label{k-left-nbhd-3}
  [u^*_k\eta_{\delta_1}(\chi_{k, \eps_1}(\tilde{\tilde{g}}))u_k] (\xi^+_{k+1, \eps_1}(f)) \approx^{\norm{\cdot}_2}_{3^k\delta_1} \xi^+_{k+1, \eps_1}(f), \quad \quad \textrm{(\eqref{left-nbhd-3} when $k=1$)} 
\end{equation} 
and
\begin{equation}\label{k-left-nbhd-4}
  [u^*_k\eta_{\delta_1}(\chi_{k, \eps_1}(\tilde{\tilde{g}}))u_k] (\xi^-_{k+1, \eps_1}(f)) \approx^{\norm{\cdot}_2}_{3^k\delta_1} \xi^-_{k+1, \eps_1}(f). \quad \quad \textrm{(\eqref{left-nbhd-4} when $k=1$)}
\end{equation} 

Let us construct $u_{k+1}$.
Define
$$\left< \xi^-_{k+1, \eps_1}(f) \right> := [u^*_k\eta_{\delta_1}(\chi_{k, \eps_1}(\tilde{\tilde{g}}))u_k] (\xi^-_{k+1, \eps_1}(f)) \in \mathrm{Her}([u_k^* \eta_{\delta_1}(\chi_{k, \eps_1}(\tilde{\tilde{g}}))u_k] ) \cap D.$$
It follows from \eqref{k-left-nbhd-4} that
\begin{equation}\label{k-Xi-cut}
 \norm{\left< \xi^-_{k+1, \eps_1}(f) \right> - \xi^-_{k+1, \eps_1}(f)}_{2, \tr(A)} < 3^k\delta_1.  
 \end{equation}
Then, for all $\tau \in \tr(A)$, 
$$ \tau(\theta_{k+1, \eps_1}(f)) > d_\tau(\xi^-_{k+1, \eps_1}(f)) + 2 \gamma \geq d_\tau([u^*_k\eta_{\delta_1}(\chi_{k, \eps_1}(\tilde{\tilde{g}}))u_k]\xi^-_{k+1, \eps_1}(f)) + 2 \gamma  = \mathrm{d}_\tau(\left< \xi^-_{k+1, \eps_1}(f) \right>) + 2\gamma,$$
and hence 
$$\tau(\theta_{k+1, \eps_1}(\tilde{\tilde{g}})) \approx_{\gamma} \tau(\theta_{k+1, \eps_1}(f)) > \mathrm{d}_\tau( \left< \xi_{k+1, \eps_1}^-(f) \right>) + 2 \gamma. $$ 
In particular,
\begin{equation}\label{k-lem-cond-2}
\tau(\theta_{k+1, \eps_1}(\tilde{\tilde{g}})) >  \mathrm{d}_\tau( \left< \xi_{k+1, \eps_1}^-(f) \right> ),\quad \tau \in \tr(A). 
\end{equation}

Also define 
\begin{equation}\label{k+1-Xi-+}
\left< \xi^+_{k+1, \eps_1}(f) \right> := [u_k^*(\eta_{\delta_1}(\chi_{k, \eps_1}(\tilde{\tilde{g}}))u_k] (\xi^+_{k+1, \eps_1}(f)) \in \mathrm{Her}([u_k^*(\eta_{\delta_1}((\chi_{k, \eps_1}(\tilde{\tilde{g}})))u_k] ) \cap D.
\end{equation}
Then,
\begin{eqnarray*}
&& \tau(\eta_{\eps_1/2}(\chi_{k+1, \eps_1}( f )) ) \\
& < & \tau(\xi_{k+1, \eps_1}^+(f)) - 3 \gamma \quad \quad (\eqref{small-gap})\\
& \approx_{3^k\delta_1} & \tau( [u_k^*(\eta_{\delta_1}(\chi_{k, \eps_1}(\tilde{\tilde{g}}))u_k] (\xi^+_{k+1, \eps_1}(f)) ) - 3 \gamma \quad\quad (\eqref{k-left-nbhd-3}) \\
&= & \tau(  \left< \xi^+_{k+1, \eps_1}(f) \right> ) - 3 \gamma \\
& < & \mathrm{d}_\tau( \left< \xi^+_{k+1, \eps_1}(f) \right> ) - 3 \gamma, 
\end{eqnarray*}
and therefore
$$ \tau(\eta_{\eps_1/2}(\chi_{k+1, \eps_1}(\tilde{\tilde{g}}))) \approx_{2\gamma} \tau(\eta_{\eps_1/2}(\chi_{k+1, \eps_1}( f )) )  < d_\tau( [\xi^+_{k+1, \eps_1}(f)]) - 3 \gamma+ 3^k\delta_1.$$
In particular  (note that $3^N\delta_1 < \gamma /2$), 
\begin{equation}\label{k-lem-cond-3}
\tau(\eta_{\eps_1/2}(\chi_{k+1, \eps_1}(\tilde{\tilde{g}}))) < d_\tau( [\xi^+_{k+1, \eps_1}(f)]),\quad \tau \in\tr(A).
\end{equation}

With \eqref{k-lem-cond-1}, \eqref{k-lem-cond-2}, and \eqref{k-lem-cond-3}, by Lemma \ref{D-sandwich}, for any $\delta''>0$ (to be fixed later), there is a contraction $u_{k+1} \in A$ such that
\begin{equation}\label{k-u-2-cond-1}
 u_{k+1}^*\chi_{k+1, \eps_1}(\tilde{\tilde{g}})u_{k+1} \in^{\norm{\cdot}_{2}}_{\delta''}   \overline{[u_k^*\eta_{\delta'}(\chi_{k, \eps_1}(\tilde{\tilde{g}}))u_k] A [u_k^*\eta_{\delta'}(\chi_{k, \eps_1}(\tilde{\tilde{g}}))u_k] },
 \end{equation}
\begin{equation}\label{k-u-2-cond-2}
\eta_{\eps_1/4}(u^*_{k+1} \chi_{k+1, \eps_1}(\tilde{\tilde{g}}) u_{k+1}) \in^{\norm{\cdot}_2}_{\delta''} \overline{\left< \xi_{k+1, \eps_1}^+(f) \right> A \left< \xi_{k+1, \eps_1}^+(f) \right>},
\end{equation}
\begin{equation}\label{k-u-2-cond-3}
 \eta_{\delta''}(u_{k+1}^*\chi_{k+1, \eps_1}(\tilde{\tilde{g}})u_{k+1}) \left< \xi_{k+1, \eps_1}^-(f) \right>  \approx_{\delta''}^{\norm{\cdot}_2} \left< \xi_{k+1, \eps_1}^-(f) \right>,
 \end{equation}
\begin{equation}\label{u-2-cond-4}
\mathrm{dist}_{2, \tr(A)}(u_{k+1}du_{k+1}^*, D_1) < \delta'',\quad \mathrm{dist}_{2, \tr(A)}(u_{k+1}^*du_{k+1}, D_1) < \delta'',\quad d \in D_1,
\end{equation}
and
\begin{equation}\label{u-2-cond-5}
\norm{u_{k+1}u_{k+1}^* - 1}_{2, \tr(A)},\  \norm{u_{k+1}^*u_{k+1} - 1}_{2, \tr(A)} < \delta''.
\end{equation}
 
By \eqref{u-2-cond-4} and \eqref{u-2-cond-5}, with $\delta''$ sufficiently small,
\begin{equation}\label{k+1-approx-close-D}
\mathrm{dist}_{2, \tr(A)}(u_{k+1}^*\chi_{k+1, \eps_1}(\tilde{\tilde{g}}) u_{k+1}, (D)^+_1) < 3\delta'' < \min\{\eps_0, \delta_2, \eps/2\}.
\end{equation}
This verifies Assumption \eqref{k-approx-close-D} for $k+1$.

 Also note 
 \begin{eqnarray*}
 & & u_{k+1}^*( \chi_{k+1, \eps_1}(\tilde{\tilde{g}})) u_{k+1} \\
 & \approx_{\delta''}^{\norm{\cdot}_2} & (u_{k+1}^*( \chi_{k+1, \eps_1}(\tilde{\tilde{g}})) u_{k+1}) ( \eta_{2\delta_1}([u_k^* \chi_{k, \eps_1}(\tilde{\tilde{g}}) u_k] )) \quad \quad (\eqref{k-u-2-cond-1}, \eqref{k-down-to-D-3}) \\
 & \approx_{\delta_1}^{\norm{\cdot}_2} & (u_{k+1}^*( \chi_{k+1, \eps_1}(\tilde{\tilde{g}})) u_{k+1}) ( \eta_{2\delta_1}(u_k^* \chi_{k, \eps_1}(\tilde{\tilde{g}}) u_k )) \quad \quad (\eqref{k-down-to-D-5}) \\
 & \approx_{2\delta_1}^{\norm{\cdot}_2} & (u_{k+1}^*( \chi_{k+1, \eps_1}(\tilde{\tilde{g}})) u_{k+1}) ( \eta_{2\delta_1}(u_k^* \chi_{k, \eps_1}(\tilde{\tilde{g}}) u_k )) (u_k\chi_{k, \eps_1}(\tilde{\tilde{g}}) u_k) \\
 &\approx_{\delta_1+\delta''}& (u_{k+1}^*( \chi_{k+1, \eps_1}(\tilde{\tilde{g}})) u_{k+1}) (u^*_k\chi_{k, \eps_1}(\tilde{\tilde{g}}) u_k). \quad \quad (\eqref{k-u-2-cond-1}, \eqref{k-down-to-D-3}, \eqref{k-down-to-D-5})
\end{eqnarray*}
In other words,
\begin{equation*}
 (u_k^* \chi_{k, \eps_1}(\tilde{\tilde{g}}) u_k)(u_{k+1}^*(\chi_{k+1, \eps_1}(\tilde{\tilde{g}}) )u_{k+1}) \approx_{6\delta_1}^{\norm{\cdot}_2} u_{k+1}^*(\chi_{k+1, \eps_1}(\tilde{\tilde{g}}) )u_{k+1}.
 \end{equation*}
 This verifies Assumption \eqref{k-match-tr} for $k+1$.
 
 By \eqref{k-u-1-cond-2} and \eqref{k-u-2-cond-1}, \eqref{k-down-to-D-3}, \eqref{k-down-to-D-4},
\begin{eqnarray*}
\chi_k(f) (u_{k+1}^*(\chi_{k+1, \eps_1}(\tilde{\tilde{g}}) )u_{k+1}) & \approx_{\delta''} & \chi_k(f) \eta_{2\delta_1}([u_k^*\chi_{k, \eps_1}(\tilde{\tilde{g}})u_k]) (u_{k+1}^*(\chi_{k+1, \eps_1}(\tilde{\tilde{g}}) )u_{k+1}) \quad \quad (\eqref{k-down-to-D-3}) \\
& = & \chi_k(f) \eta_{\eps_1/4}([u_k^*\chi_{k, \eps_1}(\tilde{\tilde{g}})u_k]) (u_{k+1}^*(\chi_{k+1, \eps_1}(\tilde{\tilde{g}}) )u_{k+1})\\
& \approx^{\norm{\cdot}_2}_{\delta_1} & (\chi_k(f) \eta_{\eps_1/4}(u_k^*\chi_{k, \eps_1}(\tilde{\tilde{g}})u_k)) (u_{k+1}^*(\chi_{k+1, \eps_1}(\tilde{\tilde{g}}) )u_{k+1}) \quad \quad (\eqref{k-down-to-D-4}) \\
& \approx^{\norm{\cdot}_2}_{\delta_1} & \eta_{\eps_1/4}(u_k^*\chi_{k, \eps_1}(\tilde{\tilde{g}})u_k) (u_{k+1}^*(\chi_{k+1, \eps_1}(\tilde{\tilde{g}}) )u_{k+1}) \quad \quad (\eqref{k-u-1-cond-2}) \\
& \approx^{\norm{\cdot}_2}_{\delta_1} & \eta_{\eps_1/4}([u_k^*\chi_{k, \eps_1}(\tilde{\tilde{g}})u_k]) (u_{k+1}^*(\chi_{k+1, \eps_1}(\tilde{\tilde{g}}) )u_{k+1}) \quad \quad (\eqref{k-down-to-D-4}) \\
& = & u_{k+1}^*(\chi_{k+1, \eps_1}(\tilde{\tilde{g}}) )u_{k+1}.
\end{eqnarray*}
So,
$$ \chi_k(f) (u_{k+1}^*(\chi_{k+1, \eps_1}(\tilde{\tilde{g}}) )u_{k+1}) \approx^{\norm{\cdot}_2}_{4\delta_1} u_{k+1}^*(\chi_{k+1, \eps_1}(\tilde{\tilde{g}}) )u_{k+1}.  $$
This verifies Assumption \eqref{k-approx-bak} for $k+1$.

If $k+1 \leq N-1$, with the same argument as for \eqref{id-down-1}, 
\begin{eqnarray*}
 && (u_{k+1}^* \chi_{{k+1}, \eps_1}(\tilde{\tilde{g}}) u_{k+1})\chi_{k+2}(f) \\
 & \approx_{3N\eps_1} & (u_{k+1}^* \chi_{k+1, \eps_1}(\tilde{\tilde{g}}) u_{k+1})\chi_{k+2, 3\eps_1}(f) \\
 &=&  (u_{k+1}^* \chi_{k+1, \eps_1}(\tilde{\tilde{g}}) u_{k+1}) \xi_{k+1, \eps_1}^-(f)\chi_{k+2, 3\eps_1}(f) \\
 &\approx^{\norm{\cdot}_2}_{3^k\delta_1}&  (u_{k+1}^* \chi_{k+1, \eps_1}(\tilde{\tilde{g}}) u_{k+1}) \left< \xi_{k+1, \eps_1}^-(f) \right> \chi_{k+2, 3\eps_1}(f) \quad \quad (\eqref{k-Xi-cut}) \\
 & \approx_{\delta''}^{\norm{\cdot}_2} & (u_{k+1}^* \chi_{k+1, \eps_1}(\tilde{\tilde{g}}) u_{k+1}) \eta_{\delta''}(u_{k+1}^*\chi_{k+1, \eps_1}(\tilde{\tilde{g}})u_{k+1}) \left< \xi_{k+1, \eps_1}^-(f) \right> \chi_{k+2, 3\eps_1}(f) \quad \quad (\eqref{k-u-2-cond-3}) \\
& \approx_{\delta''} & \eta_{\delta''}(u_{k+1}^*\chi_{k+1, \eps_1}(\tilde{\tilde{g}})u_{k+1}) \left< \xi_{k+1, \eps_1}^-(f)\right> \chi_{k+2, 3\eps_1}(f)\\
 & \approx_{\delta''}^{\norm{\cdot}_2} &  \left<  \xi_{k+1, \eps_1}^-(f) \right> \chi_{k+2, 3\eps_1}(f) \quad \quad ( \eqref{k-u-2-cond-3}) \\
 & \approx^{\norm{\cdot}_2}_{3^k\delta_1} & \chi_{k+2, 3\eps_1}(f) \approx_{3N\eps_1} \chi_{k+2}(f).  \quad \quad (\eqref{k-Xi-cut})
 \end{eqnarray*}
Thus, 
\begin{equation*}
 (u_{k+1}^*(\chi_{k+1, \eps_1}(\tilde{\tilde{g}}) )u_{k+1}) \chi_{k+2}(f) \approx_{7N\eps_1 + 3^{k+1}\delta_1}^{\norm{\cdot}_2} \chi_{k+2}(f).
\end{equation*}
This verifies Assumption \eqref{k-approx-for} for $k+1$ (which is void if $k+1 = N$).

With $\delta''$ sufficiently small, one has  
\begin{equation*}
\abs{\tau((u_{k+1}^* x u_{k+1})^j) - \tau(x^j)} < \delta_1,\quad j=1, ..., M,\  x\in (A)_1,\ \tau \in \tr(A). 
\end{equation*}
This verifies \eqref{k-match-tr-p} for $k+1$.

If $k+1 \leq N-1$, let us verify that (with $\delta''$ sufficiently small), the contraction $u_{k+1}$ satisfies the inductive assumptions \eqref{k-u-1-cond-2}, \eqref{k-down-to-D-3}, \eqref{k-down-to-D-4}, \eqref{k-down-to-D-5}, \eqref{k-lem-cond-1}, \eqref{k-left-nbhd-3}, and \eqref{k-left-nbhd-4} for $k+1$.
 
By \eqref{k+1-Xi-+} and noting that $ [u_k^*(\eta_{\delta_1}(\chi_{k, \eps_1}(\tilde{\tilde{g}}))u_k]$ and $ (\xi^+_{k+1, \eps_1}(f))$ commute (both are in $D$), one has
$$ \overline{\left< \xi_{k+1, \eps_1}^+(f) \right> A \left< \xi_{k+1, \eps_1}^+(f) \right>} \subseteq \overline{ \xi_{k+1, \eps_1}^+(f) A  \xi_{k+1, \eps_1}^+(f) }.$$
 Thus, Assumption \eqref{k-u-1-cond-2} for $k+1$ follows from \eqref{k-u-2-cond-2}. 
 
For the other assumptions, let us repeat the argument of $u_1$:  Set 
$$ \rho_{k+1}:=\min\{\tau(\rho_{k+1, \delta_1}(\tilde{\tilde{g}})): \tau \in \tr(A)\},$$
where
$$\rho_{k+1, \delta_1} = \left\{
\begin{array}{ll}
0, & t\leq \frac{k+1}{N}+\eps_1-\delta_1, \\
\mathrm{linear}, & \frac{k+1}{N} + \eps_1-\delta_1 \leq t \leq \frac{k+1}{N} + \eps_1-\delta_1/2, \\
1, & t = \frac{k+1}{N} + \eps_1-\delta_1/2, \\
\mathrm{linear}, & \frac{k+1}{N} + \eps_1-\delta_1/2 \leq t \leq \frac{k+1}{N} + \eps_1, \\
0, & t \geq \frac{k+1}{N}+\eps_1.
\end{array}
\right. 
$$
Since $(k+1)/N + \eps_1$ is not isolated from the left in $\mathrm{sp}(\tilde{\tilde{g}})$ and $A$ is simple, we have that $\rho_1>0$.

By \eqref{k+1-approx-close-D}, with a sufficiently small $\delta''$, there is a positive contraction 
$$ [u^*_{k+1}\chi_{k+1, \eps_1}(\tilde{\tilde{g}}) u_{k+1}] \in D $$
such that
\begin{equation*}
 \norm{ [u^*_{k+1}\chi_{k+1, \eps_1}(\tilde{\tilde{g}}) u_{k+1}] - u^*_{k+1}\chi_{k+1, \eps_1}(\tilde{\tilde{g}}) u_{k+1} }_{2, \tr(A)} < 3\delta''<\delta_1. 
\end{equation*}
With $\delta''$ sufficiently small, one has
$$\norm{ \eta_{\delta_1}([u^*_{k+1}\chi_{k+1, \eps_1}(\tilde{\tilde{g}}) u_{k+1}]) - u^*_{k+1}\eta_{\delta_1}(\chi_{k+1, \eps_1}(\tilde{\tilde{g}})) u_{k+1} }_{2, \tr(A)} < \min\{\rho_{k+1}/8, \delta_1 \}.$$
Define
\begin{equation}\label{k+1-defn-down-to-D-m}
[u^*_{k+1}\eta_{\delta_1}(\chi_{{k+1}, \eps_1}(\tilde{\tilde{g}})) u_{k+1}] := \eta_{\delta_1}([u^*_{k+1}\chi_{k+1, \eps_1}(\tilde{\tilde{g}}) u_{k+1}]) \in D.
\end{equation}
Then
\begin{equation}\label{k+1-down-to-D-2}
 \norm{ [u^*_{k+1}\eta_{\delta_1}(\chi_{k+1, \eps_1}(\tilde{\tilde{g}})) u_{k+1}] - u^*_{k+1}\eta_{\delta_1}(\chi_{k+1, \eps_1}(\tilde{\tilde{g}})) u_{k+1} }_{2, \tr(A)} < \min\{ \rho_{k+1}/8, \delta_1\}.
 \end{equation}
 Moreover (by \eqref{k+1-defn-down-to-D-m}), 
 \begin{equation}\label{k+1-down-to-D-3}
 \eta_{2\delta_1}([u^*_{k+1}\chi_{k+1, \eps_1}(\tilde{\tilde{g}}) u_{k+1}]) [u^*_{k+1}\eta_{\delta_1}(\chi_{k+1, \eps_1}(\tilde{\tilde{g}})) u_{k+1}]  = [u^*_{k+1}\eta_{\delta_1}(\chi_{k+1, \eps_1}(\tilde{\tilde{g}})) u_{k+1}]. 
 \end{equation}
 This verifies 
 \eqref{k-down-to-D-3} for $k+1$.
 
 One should assume $\delta''$ is sufficiently small  that
 \begin{equation}\label{k+1-down-to-D-4} 
 \norm{ \eta_{\eps_1/4}([u_{k+1}^*\chi_{k+1, \eps_1}(\tilde{\tilde{g}})u_{k+1}]) - \eta_{\eps_1/4}(u_{k+1}^*\chi_{k+1, \eps_1}(\tilde{\tilde{g}})u_{k+1}) }_{2, \tr(A)} < \delta_1,
 \end{equation}
and
 \begin{equation}\label{k+1-down-to-D-5} 
 \norm{ \eta_{2\delta_1}([u_{k+1}^*\chi_{k+1, \eps_1}(\tilde{\tilde{g}})u_{k+1}]) - \eta_{2\delta_1}(u_{k+1}^*\chi_{k+1, \eps_1}(\tilde{\tilde{g}})u_{k+1}) }_{2, \tr(A)} < \delta_1. 
 \end{equation}
 This verifies \eqref{k-down-to-D-4} and \eqref{k-down-to-D-5} for $k+1$.
 
 Note that, by \eqref{k+1-down-to-D-2}, 
\begin{equation}\label{k+1-left-nbhd-1}
 \abs{\tau( [u_{k+1}^* \eta_{\delta_1}( \chi_{k+1, \eps_1}(\tilde{\tilde{g}})) u_{k+1}] ) - \tau( u_{k+1}^* \eta_{\delta_1}( \chi_{k+1, \eps_1}(\tilde{\tilde{g}})) u_{k+1} )} < \rho_{k+1}/8,\quad \tau \in\tr(A).
 \end{equation}
 
 With $\delta''$ sufficiently small, one has
\begin{equation}\label{k+1-left-nbhd-2}
 \tau(\eta_{\delta_1}(\chi_{k+1, \eps_1}(\tilde{\tilde{g}}))) \approx_{\rho_{k+1}/8} \tau(u^*_{k+1}\eta_{\delta_1}(\chi_{k+1, \eps_1}(\tilde{\tilde{g}}))u_{k+1}),\quad \tau \in \tr(A), 
 \end{equation}
and
\begin{equation}\label{k+1-swap-u-1}
\norm{ \eta_{\delta_1}(u_{k+1}^* \chi_{k+1, \eps_1}(\tilde{\tilde{g}})u_{k+1}) - u_{k+1}^* \eta_{\delta_1}(\chi_{k+1, \eps_1}(\tilde{\tilde{g}}))u_{k+1}}_{2, \tr(A)} < \delta_1/4.
\end{equation}
Hence, by \eqref{k+1-left-nbhd-1} and \eqref{k+1-left-nbhd-2},
\begin{eqnarray*}
\mathrm{d}_\tau(\chi_{k+2, \eps_1}(\tilde{\tilde{g}})) & \leq &  \tau(\eta_{\delta_1}(\chi_{k+1, \eps_1}(\tilde{\tilde{g}}))) -\rho_{k+1}/2 \\
& \approx_{\rho_{k+1}/8} & \tau(u^*_{k+1}\eta_{\delta_1}(\chi_{k+1, \eps_1}(\tilde{\tilde{g}}))u_{k+1}) - \rho_{k+1}/2 \\
& \approx_{\rho_{k+1}/8} & \tau([u^*_{k+1}\eta_{\delta_1}(\chi_{k+1, \eps_1}(\tilde{\tilde{g}}))u_{k+1}]) - \rho_{k+1}/2  \\
& < & \mathrm{d}_\tau([u^*_{k+1}\eta_{\delta_1}(\chi_{k+1, \eps_1}(\tilde{\tilde{g}}))u_{k+1}]) - \rho_{k+1}/2,
\end{eqnarray*}
for all $\tau \in \tr(A)$,
and therefore
\begin{equation}\label{k+1-lem-cond-1}
\mathrm{d}_\tau(\chi_{k+2, \eps_1}(\tilde{\tilde{g}})) < \mathrm{d}_\tau([u^*_{k+1}\eta_{\delta_1}(\chi_{k+1, \eps_1}(\tilde{\tilde{g}}))u_{k+1}]). 
\end{equation}
This verifies \eqref{k-lem-cond-1} for $k+1$.

By \eqref{k-u-2-cond-3} and \eqref{k-Xi-cut} (and \eqref{k+1-down-to-D-2}, \eqref{k+1-swap-u-1}), (note that $3\eps_1 < 1/N$ and $\delta''\ll \delta_1$)
\begin{eqnarray}\label{k+1-left-nbhd-3}
  & & [u^*_{k+1}\eta_{\delta_1}(\chi_{k+1, \eps_1}(\tilde{\tilde{g}}))u_{k+1}] (\xi^+_{k+2, \eps_1}(f)) \\
  & = & [u^*_{k+1}\eta_{\delta_1}(\chi_{k+1, \eps_1}(\tilde{\tilde{g}}))u_{k+1}]  (\xi^-_{k+1, \eps_1}(f))   (\xi^+_{k+2, \eps_1}(f))  \nonumber \\
  & \approx_{\delta_1}^{\norm{\cdot}_2}& (u^*_{k+1}\eta_{\delta_1}(\chi_{k+1, \eps_1}(\tilde{\tilde{g}}))u_{k+1})  (\xi^-_{k+1, \eps_1}(f))   (\xi^+_{k+2, \eps_1}(f))  \quad\quad ( \eqref{k+1-down-to-D-2}) \nonumber \\ 
  & \approx_{\delta_1}^{\norm{\cdot}_2}& \eta_{\delta_1}((u^*_{k+1}(\chi_{k+1, \eps_1}(\tilde{\tilde{g}}))u_{k+1}))  (\xi^-_{k+1, \eps_1}(f))   (\xi^+_{k+2, \eps_1}(f)) \quad \quad (\eqref{k+1-swap-u-1}) \nonumber \\ 
  & \approx_{3^k \delta_1}^{\norm{\cdot}_2}& \eta_{\delta_1}((u^*_{k+1}(\chi_{k+1, \eps_1}(\tilde{\tilde{g}}))u_{k+1}))  \left< \xi^-_{k+1, \eps_1}(f) \right>   (\xi^+_{k+2, \eps_1}(f)) \quad \quad (\eqref{k-Xi-cut}) \nonumber \\
  & \approx_{\delta''}^{\norm{\cdot}_2} & \left< \xi^-_{k+1, \eps_1}(f) \right>   (\xi^+_{k+2, \eps_1}(f)) \quad \quad (\eqref{k-u-2-cond-3}) \nonumber  \\
  & \approx_{3^k \delta_1}^{\norm{\cdot}_2} & ( \xi^-_{k+1, \eps_1}(f) )   (\xi^+_{k+2, \eps_1}(f)) \quad \quad (\eqref{k-Xi-cut}) \nonumber  \\
  & = &   \xi^+_{k+2, \eps_1}(f), \nonumber
  \end{eqnarray}
and therefore, one also has 
\begin{equation}\label{k+1-left-nbhd-4}
  [u^*_{k+1}\eta_{\delta_1}(\chi_{k+1, \eps_1}(\tilde{\tilde{g}}))u_{k+1}] (\xi^-_{k+2, \eps_1}(f)) \approx^{\norm{\cdot}_2}_{3^{k+1}\delta_1} \xi^-_{k+2, \eps_1}(f). 
\end{equation}  
This verifies \eqref{k-left-nbhd-3} and \eqref{k-left-nbhd-4} for $k+1$. Fix $\delta''$, and we obtain the desired $u_{k+1}$.

By induction, there are contractions $u_1, u_2, ..., u_{N} \in A$ such that (note that $3^N\delta_1 < \eps_1$)
\begin{equation}\label{N-approx-close-D}
\mathrm{dist}_{2, \tr(A)}(u_i^*\chi_{i, \eps_1}(\tilde{\tilde{g}}) u_i, (D)^+_1) < \min\{\eps_0, \delta_2, \eps/4\} \leq \eps_0,\quad i=1, ..., N,
\end{equation}
\begin{equation}\label{N-approx-bak}
 \norm{ \chi_{i-1}(f) (u_{i}^* \chi_{i, \eps_1}(\tilde{\tilde{g}}) u_{i}) - (u_{i}^* \chi_{i, \eps_1}(\tilde{\tilde{g}}) u_{i}) }_{2, \tr(A)} < 4 \delta_1 < \eps_0,\quad i=2, ..., N,
\end{equation}
\begin{equation}\label{N-approx-for}
\norm{ (u_{i}^* \chi_{i, \eps_1}(\tilde{\tilde{g}}) u_{i}) \chi_{i+1}(f) - \chi_{i+1}(f) }_{2, \tr(A)} < 7 N \eps_1 + 3^i\delta_1 < 8N\eps_1 < \eps_0,\quad i=1, ..., N-1, 
\end{equation}
\begin{equation}\label{N-match-tr}
\norm{ (u_{i-1}^* \chi_{i, \eps_1}(\tilde{\tilde{g}}) u_{i-1})(u_{i}^* \chi_{i, \eps_1}(\tilde{\tilde{g}}) u_{i}) - (u_{i}^* \chi_{i, \eps_1}(\tilde{\tilde{g}}) u_{i}) }_{2, \tr(A)} < 6\delta_1 < \delta_0,\quad i=1, ..., N, 
\end{equation}
and
\begin{equation}\label{N-match-tr-p}
\abs{\tau((u_i^* x u_i)^j) - \tau(x^j)} < \delta_1 < \delta_0,\quad i =1, ..., N,\ j=1, ..., M,\  x\in (A)_1,\ \tau \in \tr(A).\end{equation}

Define
$$g:= \frac{1}{N}(u_1^*\chi_{1, \eps_1}(\tilde{\tilde{g}}) u_1 + \cdots + u_N^*\chi_{N, \eps_1}(\tilde{\tilde{g}}) u_N).$$
Then, by \eqref{N-approx-close-D}, \eqref{N-approx-bak} and \eqref{N-approx-for}, it follows from Lemma \ref{hereditary-sets-tr-app} that
\begin{equation}\label{almost-g-1}
 \norm{ f - g}_{2, \tr(A)} < \eps/2.
 \end{equation}

Note that
$$(\tilde{\tilde{g}} - \eps_1)_+ = \frac{1}{N}(\chi_{1, \eps_1}(\tilde{\tilde{g}}) + \cdots + \chi_{N, \eps_1}(\tilde{\tilde{g}}) ).$$
By \eqref{N-match-tr} and  \eqref{N-match-tr-p}, it follows from Lemma \ref{hereditary-sets-preserve-tr} that 
\begin{equation}\label{almost-g-2}
\abs{ \tau(\chi_{\frac{1}{2}+\eps_1, \delta}((\tilde{\tilde{g}} - \eps_1))) - \tau(\chi_{\frac{1}{2}+\eps_1, \delta}(g))} < \eps/4,\quad \tau\in \tr(A). 
\end{equation}

By \ref{N-approx-close-D} again, one has
$$\mathrm{dist}_{2, \tr(A)}(g, (D)_1^+) < \min\{\delta_2, \eps/4\};$$
together with \eqref{almost-g-1}, \eqref{almost-g-2}, and the choice of $\delta_2$ (\eqref{choice-delta-2}),
there is a positive contraction in $D$, still denoted by $g$, such that 
$$\norm{f - g }_{2, \tr(A)} < \eps/2 + \eps/4 = 3\eps/4$$ 
and,
$$ \abs{ \tau(\chi_{\frac{1}{2}+\eps_1, \delta}((\tilde{\tilde{g}} - \eps_1))) - \tau(\chi_{\frac{1}{2}+\eps_1, \delta}(g))} < \eps/4 + \eps/4 = \eps/2,\quad \tau\in \tr(A) $$
By \eqref{small-strech}, one has
$$ \tau(\chi_{\frac{1}{2}+\eps_1, \delta}(g)) <\eps/2 + \eps/2 = \eps.$$
Stretching $g$ to move $\frac{1}{2} + \eps_1$ to $\frac{1}{2}$ (and note that $\eps_1< \eps/2$), it satisfies the desired approximations \eqref{positive-s1} and \eqref{positive-s2}.
\end{proof}

As a consequence of Theorem \ref{prop-S}, one has the following corollary: 

\begin{cor}\label{Z-SBP}
Let $A$ be a simple AH algebra with diagonal maps, or let $A = \mathrm{C}(X)\rtimes\Gamma$, where $(X, \Gamma)$ is free, minimal, and has the (URP) and (COS). If $(D, A)$ has Property (C), where $D$ is the canonical commutative sub-C*-algebra, then $(D, \tr(A)|_D)$ has the (SBP).
\end{cor}

\begin{proof}
By Theorem \ref{Property-D}, the C*-algebra pair  $(D, A)$ has Property (E). Since (by Proposition \ref{Property-S-Prop}) $A$ has Property (S), by Theorem \ref{prop-S}, $(D, \tr(A))$ has the (SBP).
\end{proof}

\begin{rem}
By Example \ref{weak-S}, there are Villadsen algebras which have Property (S). Since the diagonal sub-C*-algebras have Property (E) but do not have the (SBP), they do not have Property (C). It would be an interesting question whether $\mathcal Z$-absorption of $A$ implies Property (C) of $(D, A)$.  
\end{rem}

\bibliographystyle{plainurl}
\bibliography{operator_algebras}

@article{EN-SBP-I,
	author = {Elliott, G. A. and Niu, Z.},
	date-added = {2025-03-08 17:24:03 -0700},
	date-modified = {2025-04-02 23:33:13 -0600},
	journal = {arXiv:2406.09748},
	title = {On the small boundary property, $\mathcal {Z}$-absorption, and {B}auer simplexes},
	year = {2024}}

@article{ELN-Vill,
	author = {Elliott, G. A. and Li, C. G. and Niu, Z.},
	date-added = {2024-12-04 15:54:25 -0700},
	date-modified = {2024-12-04 15:55:25 -0700},
	doi = {10.1016/j.jfa.2024.110547},
	fjournal = {Journal of Functional Analysis},
	issn = {0022-1236},
	journal = {J. Funct. Anal.},
	mrclass = {46L35 (46L80)},
	mrnumber = {4758325},
	number = {7},
	pages = {Paper No. 110547, 55},
	title = {Remarks on {V}illadsen algebras},
	url = {https://doi.org/10.1016/j.jfa.2024.110547},
	volume = {287},
	year = {2024},
	bdsk-url-1 = {https://mathscinet.ams.org/mathscinet-getitem?mr=4758325}}

@article{KR-CenSeq,
	author = {Kirchberg, E. and R{\o}rdam, M.},
	date-added = {2024-06-25 16:43:13 -0600},
	date-modified = {2024-06-25 16:44:33 -0600},
	doi = {10.1515/crelle-2012-0118},
	fjournal = {Journal f\"{u}r die Reine und Angewandte Mathematik. [Crelle's Journal]},
	issn = {0075-4102},
	journal = {J. Reine Angew. Math.},
	mrclass = {46L35 (46C20)},
	mrnumber = {3276157},
	mrreviewer = {Aurelian Gheondea},
	pages = {175--214},
	title = {Central sequence {C*}-algebras and tensorial absorption of the {J}iang-{S}u algebra},
	url = {https://doi.org/10.1515/crelle-2012-0118},
	volume = {695},
	year = {2014},
	bdsk-url-1 = {https://mathscinet.ams.org/mathscinet-getitem?mr=3276157}}

@article{CETW-Gamma,
	author = {Castillejos, J. and Evington, S. and Tikuisis, A. and White, S.},
	date-added = {2023-09-25 13:49:31 -0600},
	date-modified = {2023-09-25 13:50:22 -0600},
	doi = {10.1093/imrn/rnaa282},
	fjournal = {International Mathematics Research Notices. IMRN},
	issn = {1073-7928},
	journal = {Int. Math. Res. Not. IMRN},
	mrclass = {46L05 (46L35)},
	mrnumber = {4447140},
	number = {13},
	pages = {9864--9908},
	title = {Uniform property {$\Gamma$}},
	url = {https://doi.org/10.1093/imrn/rnaa282},
	year = {2022},
	bdsk-url-1 = {https://mathscinet.ams.org/mathscinet-getitem?mr=4447140},
	bdsk-url-2 = {https://doi.org/10.1093/imrn/rnaa282}}

@article{KS-comparison,
	author = {Kerr, D. and Szab\'{o}, G.},
	date-added = {2022-03-28 17:13:41 -0600},
	date-modified = {2022-12-27 11:39:21 -0700},
	doi = {10.1007/s00220-019-03519-z},
	fjournal = {Communications in Mathematical Physics},
	issn = {0010-3616},
	journal = {Comm. Math. Phys.},
	mrclass = {22D25 (19K99 37A25 37A55)},
	mrnumber = {4066584},
	mrreviewer = {Konstantin Slutsky},
	number = {1},
	pages = {1--31},
	title = {Almost finiteness and the small boundary property},
	url = {https://mathscinet.ams.org/mathscinet-getitem?mr=4066584},
	volume = {374},
	year = {2020},
	bdsk-url-1 = {https://mathscinet.ams.org/mathscinet-getitem?mr=4066584},
	bdsk-url-2 = {https://doi.org/10.1007/s00220-019-03519-z}}

@article{Niu-MD-Z-absorbing,
	author = {Niu, Z.},
	date-added = {2022-03-28 17:01:43 -0600},
	date-modified = {2022-12-27 11:37:07 -0700},
	doi = {10.1090/tran/8477},
	fjournal = {Transactions of the American Mathematical Society},
	issn = {0002-9947},
	journal = {Trans. Amer. Math. Soc.},
	mrclass = {46L35 (37B99 46L55)},
	mrnumber = {4315611},
	number = {10},
	pages = {7525--7551},
	title = {{$\mathcal{Z}$}-stability of transformation group {C*}-algebras},
	url = {https://mathscinet.ams.org/mathscinet-getitem?mr=4315611},
	volume = {374},
	year = {2021},
	bdsk-url-1 = {https://mathscinet.ams.org/mathscinet-getitem?mr=4315611},
	bdsk-url-2 = {https://doi.org/10.1090/tran/8477}}

@article{CETWW-dim-n,
	author = {Castillejos, J. and Evington, S. and Tikuisis, A. and White, S. and Winter, W.},
	date-added = {2021-08-11 17:02:48 -0600},
	date-modified = {2021-10-27 17:03:30 -0600},
	doi = {10.1007/s00222-020-01013-1},
	fjournal = {Inventiones Mathematicae},
	issn = {0020-9910},
	journal = {Invent. Math.},
	mrclass = {46L35 (46L05)},
	mrnumber = {4228503},
	number = {1},
	pages = {245--290},
	title = {Nuclear dimension of simple {C*}-algebras},
	url = {https://mathscinet.ams.org/mathscinet-getitem?mr=4228503},
	volume = {224},
	year = {2021},
	bdsk-url-1 = {https://mathscinet.ams.org/mathscinet-getitem?mr=4228503},
	bdsk-url-2 = {https://doi.org/10.1007/s00222-020-01013-1}}

@article{TWW-QD,
	author = {Tikuisis, A. and White, S. and Winter, W.},
	date-added = {2021-07-23 14:38:17 -0600},
	date-modified = {2021-10-27 17:05:50 -0600},
	doi = {10.4007/annals.2017.185.1.4},
	fjournal = {Annals of Mathematics. Second Series},
	issn = {0003-486X},
	journal = {Ann. of Math. (2)},
	mrclass = {46L05 (47L40)},
	mrnumber = {3583354},
	mrreviewer = {Dinesh Jayantilal Karia},
	number = {1},
	pages = {229--284},
	title = {Quasidiagonality of nuclear {C*}-algebras},
	url = {https://mathscinet.ams.org/mathscinet-getitem?mr=3583354},
	volume = {185},
	year = {2017},
	bdsk-url-1 = {https://mathscinet.ams.org/mathscinet-getitem?mr=3583354},
	bdsk-url-2 = {https://doi.org/10.4007/annals.2017.185.1.4}}

@article{MR3614036,
	author = {Gutman, Y.},
	date-added = {2021-07-15 14:20:20 -0600},
	date-modified = {2022-12-27 11:42:23 -0700},
	doi = {10.1017/etds.2015.40},
	fjournal = {Ergodic Theory and Dynamical Systems},
	issn = {0143-3857},
	journal = {Ergodic Theory Dynam. Systems},
	mrclass = {37B50 (37A35 54F45)},
	mrnumber = {3614036},
	mrreviewer = {Tom Meyerovitch},
	number = {2},
	pages = {512--538},
	title = {Embedding topological dynamical systems with periodic points in cubical shifts},
	url = {https://mathscinet.ams.org/mathscinet-getitem?mr=3614036},
	volume = {37},
	year = {2017},
	bdsk-url-1 = {https://mathscinet.ams.org/mathscinet-getitem?mr=3614036},
	bdsk-url-2 = {https://doi.org/10.1017/etds.2015.40}}

@article{GLN-TAS-2,
	author = {Gong, G. and Lin, H. and Niu, Z.},
	date-added = {2021-03-16 10:21:41 -0600},
	date-modified = {2021-05-10 15:39:30 -0600},
	journal = {C. R. Math. Acad. Sci. Soc. R. Can.},
	number = {4},
	pages = {451--539},
	title = {Classification of finite simple amenable {$\mathcal Z$}-stable {C*}-algebras, {II}. {C*}-algebras with rational generalized tracial rank one},
	volume = {42},
	year = {2020}}

@article{LN-sr1,
	author = {C. G. Li and Z. Niu},
	date-added = {2020-08-09 16:33:55 -0600},
	date-modified = {2025-03-13 10:11:38 -0600},
	journal = {arXiv: 2008.03361},
	title = {Stable rank of {$\mathrm{C}(X)\rtimes\Gamma$}},
	year = {2020}}

@article{Gromov-MD,
	author = {Gromov, M.},
	date-added = {2019-02-07 21:41:31 -0700},
	date-modified = {2019-02-07 21:41:58 -0700},
	doi = {10.1023/A:1009841100168},
	fjournal = {Mathematical Physics, Analysis and Geometry. An International Journal Devoted to the Theory and Applications of Analysis and Geometry to Physics},
	issn = {1385-0172},
	journal = {Math. Phys. Anal. Geom.},
	mrclass = {37B99 (32H02 53C23 58E20)},
	mrnumber = {1742309},
	mrreviewer = {Boris Hasselblatt},
	number = {4},
	pages = {323--415},
	title = {Topological invariants of dynamical systems and spaces of holomorphic maps. {I}},
	url = {https://mathscinet.ams.org/mathscinet-getitem?mr=1742309},
	volume = {2},
	year = {1999},
	bdsk-url-1 = {https://mathscinet.ams.org/mathscinet-getitem?mr=1742309},
	bdsk-url-2 = {https://doi.org/10.1023/A:1009841100168}}

@article{Niu-MD-Zd,
	author = {Z. Niu},
	date-added = {2019-01-11 16:20:14 -0700},
	date-modified = {2024-10-11 15:49:24 -0600},
	doi = {https://doi.org/10.4153/S0008414X2300038X},
	journal = {Canad. J. Math.},
	number = {4},
	pages = {1240-1266},
	title = {Comparison radius and mean topological dimension: $\mathbb {Z}^d$-actions},
	volume = {76},
	year = {2024},
	bdsk-url-1 = {https://doi.org/10.4153/S0008414X2300038X}}

@article{Niu-MD-Z,
	author = {Z. Niu},
	date-added = {2019-01-11 16:18:25 -0700},
	date-modified = {2022-09-30 14:17:12 -0600},
	journal = {J. Analyse Math.},
	pages = {595-672},
	title = {Comparison radius and mean topological dimension: {R}okhlin property, comparison of open sets, and subhomogeneous {C*}-algebras},
	volume = {146},
	year = {2022}}

@article{EN-MD0,
	author = {Elliott, G. A. and Niu, Z.},
	date-added = {2018-07-05 08:09:55 +0000},
	date-modified = {2018-07-05 08:11:11 +0000},
	doi = {10.1215/00127094-2017-0033},
	fjournal = {Duke Mathematical Journal},
	issn = {0012-7094},
	journal = {Duke Math. J.},
	mrclass = {46L35 (37B05 46L85)},
	mrnumber = {3732883},
	mrreviewer = {N. N. Ganikhodjaev},
	number = {18},
	pages = {3569--3594},
	title = {The {C{$^*$}}-algebra of a minimal homeomorphism of zero mean dimension},
	url = {https://doi.org/10.1215/00127094-2017-0033},
	volume = {166},
	year = {2017},
	bdsk-url-1 = {http://mathscinet.ams.org/mathscinet-getitem?mr=3732883},
	bdsk-url-2 = {https://doi.org/10.1215/00127094-2017-0033}}

@article{GLT-Zk,
	author = {Gutman, Y. and Lindenstrauss, E. and Tsukamoto, M.},
	date-added = {2017-10-03 17:09:20 +0000},
	date-modified = {2019-08-15 13:47:24 -0600},
	doi = {10.1007/s00039-016-0372-9},
	fjournal = {Geometric and Functional Analysis},
	issn = {1016-443X},
	journal = {Geom. Funct. Anal.},
	mrclass = {37B40 (54F45)},
	mrnumber = {3540453},
	mrreviewer = {Tom Meyerovitch},
	number = {3},
	pages = {778--817},
	title = {Mean dimension of {$\Bbb{Z}^k$}-actions},
	url = {http://dx.doi.org/10.1007/s00039-016-0372-9},
	volume = {26},
	year = {2016},
	bdsk-url-1 = {http://www.ams.org/mathscinet-getitem?mr=3540453},
	bdsk-url-2 = {http://dx.doi.org/10.1007/s00039-016-0372-9}}

@article{CE-str1,
	author = {Ciuperca, A. and Elliott, G. A.},
	date-added = {2016-06-22 18:09:03 +0000},
	date-modified = {2016-06-22 18:09:43 +0000},
	doi = {10.1093/imrn/rnm158},
	fjournal = {International Mathematics Research Notices. IMRN},
	issn = {1073-7928},
	journal = {Int. Math. Res. Not. IMRN},
	mrclass = {46L35 (46L05 46L80)},
	mrnumber = {2418289},
	mrreviewer = {Michael Frank},
	number = {5},
	pages = {Art. ID rnm 158, 33},
	title = {A remark on invariants for {C*}-algebras of stable rank one},
	url = {http://dx.doi.org/10.1093/imrn/rnm158},
	year = {2008},
	bdsk-url-1 = {http://www.ams.org/mathscinet-getitem?mr=2418289},
	bdsk-url-2 = {http://dx.doi.org/10.1093/imrn/rnm158}}

@article{TWW-Z,
	author = {Toms, A. S. and White, S. and Winter, W.},
	date-added = {2015-10-05 21:39:11 +0000},
	date-modified = {2015-10-05 21:39:44 +0000},
	doi = {10.1093/imrn/rnu001},
	fjournal = {International Mathematics Research Notices. IMRN},
	issn = {1073-7928},
	journal = {Int. Math. Res. Not. IMRN},
	mrclass = {46L40},
	mrnumber = {3352253},
	number = {10},
	pages = {2702--2727},
	title = {{$\mathcal{Z}$}-stability and finite-dimensional tracial boundaries},
	url = {http://dx.doi.org/10.1093/imrn/rnu001},
	year = {2015},
	bdsk-url-1 = {http://www.ams.org/mathscinet-getitem?mr=3352253},
	bdsk-url-2 = {http://dx.doi.org/10.1093/imrn/rnu001}}

@article{EGLN-DR,
	author = {G. A. Elliott and G. Gong and H. Lin and Z. Niu},
	date-added = {2015-10-01 19:15:50 +0000},
	date-modified = {2025-03-29 22:33:13 -0600},
	eprint = {1507.03437},
	journal = {J. Noncommut. Geom.},
	number = {1},
	pages = {73-104},
	title = {On the classification of simple amenable {C*}-algebras with finite decomposition rank, {II}},
	url = {http://arxiv.org/abs/1507.03437},
	volume = {19},
	year = {2025},
	bdsk-url-1 = {http://arxiv.org/abs/1507.03437}}

@inproceedings{EN-K0-Z,
	author = {G. A. Elliott and Z. Niu},
	booktitle = {``Operator Algebras and their Applications: A Tribute to Richard V.~Kadison", Contemporary Mathematics},
	date-added = {2015-07-29 00:33:16 +0000},
	date-modified = {2016-08-09 21:27:25 +0000},
	editor = {R. S. Doran and E. Park},
	eprint = {http://dx.dot.org/10.1090/conm/671/13506},
	pages = {117-125},
	publisher = {Amer. Math. Soc.},
	title = {On the classification of simple amenable {C*}-algebras with finite decomposition rank},
	volume = {671},
	year = {2016},
	bdsk-url-1 = {http://arxiv.org/abs/1507.07876}}

@article{EGLN-ASH,
	author = {G. A. Elliott and G. Gong and H. Lin and Z. Niu},
	date-added = {2015-06-11 19:47:17 +0000},
	date-modified = {2017-10-02 03:27:50 +0000},
	journal = {J. Funct. Anal.},
	number = {12},
	pages = {5307--5359},
	title = {The classification of simple separable unital $\textrm{$\mathcal Z$}$-stable locally {ASH} algebras},
	url = {http://dx.doi.org/10.1016/j.jfa.2017.03.001},
	year = {2017},
	bdsk-url-1 = {http://arxiv.org/abs/1506.02308},
	bdsk-url-2 = {http://dx.doi.org/10.1016/j.jfa.2017.03.001}}

@article{Thomsen-AI-Tr,
	author = {Thomsen, K.},
	coden = {AJMAAN},
	date-added = {2015-05-04 19:40:33 +0000},
	date-modified = {2015-05-04 19:40:55 +0000},
	doi = {10.2307/2374993},
	fjournal = {American Journal of Mathematics},
	issn = {0002-9327},
	journal = {Amer. J. Math.},
	mrclass = {46L05 (46L30 46M15)},
	mrnumber = {1277448 (95f:46099)},
	mrreviewer = {Seung-Hyeok Kye},
	number = {3},
	pages = {605--620},
	title = {Inductive limits of interval algebras: the tracial state space},
	url = {http://dx.doi.org/10.2307/2374993},
	volume = {116},
	year = {1994},
	bdsk-url-1 = {http://www.ams.org/mathscinet-getitem?mr=1277448},
	bdsk-url-2 = {http://dx.doi.org/10.2307/2374993}}

@article{GLN-TAS-1,
	abstract = {We present a classification theorem for a class of unital simple separable amenable ${\cal Z}$-stable $C^*$-algebras by the Elliott invariant. This class of simple $C^*$-algebras exhausts all possible Elliott invariant for unital stably finite simple separable amenable ${\cal Z}$-stable $C^*$-algebras. Moreover, it contains all unital simple separable amenable $C^*$-alegbras which satisfy the UCT and have rationally generalized tracial rank one or zero.},
	author = {Gong, G. and Lin, H. and Niu, Z.},
	date-added = {2015-03-17 20:26:55 +0000},
	date-modified = {2021-03-16 10:23:22 -0600},
	journal = {C. R. Math. Acad. Sci. Soc. R. Can.},
	number = {3},
	pages = {63--450},
	title = {Classification of finite simple amenable {$\mathcal Z$}-stable {C*}-algebras, {I}. {C*}-algebras with generalized tracial rank one},
	volume = {42},
	year = {2020},
	bdsk-url-1 = {http://arxiv.org/abs/1501.00135}}

@article{EHT-sr1,
	author = {Elliott, G. A. and Ho, T. M. and Toms, A. S.},
	coden = {JFUAAW},
	date-added = {2014-07-19 21:27:56 +0000},
	date-modified = {2014-07-19 21:28:39 +0000},
	doi = {10.1016/j.jfa.2008.08.001},
	fjournal = {Journal of Functional Analysis},
	issn = {0022-1236},
	journal = {J. Funct. Anal.},
	mrclass = {46L35 (46L05)},
	mrnumber = {2476944 (2011a:46091)},
	mrreviewer = {Hua Xin Lin},
	number = {2},
	pages = {307--322},
	title = {A class of simple {C*}-algebras with stable rank one},
	url = {http://dx.doi.org/10.1016/j.jfa.2008.08.001},
	volume = {256},
	year = {2009},
	bdsk-url-1 = {http://www.ams.org/mathscinet-getitem?mr=2476944},
	bdsk-url-2 = {http://dx.doi.org/10.1016/j.jfa.2008.08.001}}

@article{GK-Dyn,
	author = {Giol, J. and Kerr, D.},
	coden = {JRMAA8},
	date-added = {2014-06-09 20:07:23 +0000},
	date-modified = {2014-06-09 20:07:56 +0000},
	doi = {10.1515/CRELLE.2010.012},
	fjournal = {Journal f{\"u}r die Reine und Angewandte Mathematik. [Crelle's Journal]},
	issn = {0075-4102},
	journal = {J. Reine Angew. Math.},
	mrclass = {46L55 (46L80)},
	mrnumber = {2608192 (2012g:46101)},
	pages = {107--119},
	title = {Subshifts and perforation},
	url = {http://dx.doi.org/10.1515/CRELLE.2010.012},
	volume = {639},
	year = {2010},
	bdsk-url-1 = {http://www.ams.org/mathscinet-getitem?mr=2608192},
	bdsk-url-2 = {http://dx.doi.org/10.1515/CRELLE.2010.012}}

@article{RorUHF2,
	author = {R{\o}rdam, M.},
	coden = {JFUAAW},
	date-added = {2013-11-01 18:24:03 +0000},
	date-modified = {2013-11-01 18:24:51 +0000},
	doi = {10.1016/0022-1236(92)90106-S},
	fjournal = {Journal of Functional Analysis},
	issn = {0022-1236},
	journal = {J. Funct. Anal.},
	mrclass = {46L05 (46L85)},
	mrnumber = {1172023 (93f:46094)},
	mrreviewer = {Mahmood Khoshkam},
	number = {2},
	pages = {255--269},
	title = {On the structure of simple {C*}-algebras tensored with a {UHF}-algebra. {II}},
	url = {http://dx.doi.org/10.1016/0022-1236(92)90106-S},
	volume = {107},
	year = {1992},
	bdsk-url-1 = {http://www.ams.org/mathscinet-getitem?mr=1172023},
	bdsk-url-2 = {http://dx.doi.org/10.1016/0022-1236(92)90106-S}}

@article{Vill-perf,
	author = {Villadsen, J.},
	coden = {JFUAAW},
	date-added = {2013-10-20 20:13:12 +0000},
	date-modified = {2013-10-20 20:13:41 +0000},
	doi = {10.1006/jfan.1997.3168},
	fjournal = {Journal of Functional Analysis},
	issn = {0022-1236},
	journal = {J. Funct. Anal.},
	mrclass = {46L05 (46L80)},
	mrnumber = {1616504 (99j:46069)},
	mrreviewer = {Mahmood Khoshkam},
	number = {1},
	pages = {110--116},
	title = {Simple {C*}-algebras with perforation},
	url = {http://dx.doi.org/10.1006/jfan.1997.3168},
	volume = {154},
	year = {1998},
	bdsk-url-1 = {http://www.ams.org/mathscinet-getitem?mr=1616504},
	bdsk-url-2 = {http://dx.doi.org/10.1006/jfan.1997.3168}}

@article{Toms-Ann,
	author = {Toms, A. S.},
	coden = {ANMAAH},
	date-added = {2013-10-20 20:10:27 +0000},
	date-modified = {2013-10-20 20:11:12 +0000},
	doi = {10.4007/annals.2008.167.1029},
	fjournal = {Annals of Mathematics. Second Series},
	issn = {0003-486X},
	journal = {Ann. of Math. (2)},
	mrclass = {46L35 (19K14 46L80)},
	mrnumber = {2415391 (2009g:46119)},
	mrreviewer = {Francesc Perera},
	number = {3},
	pages = {1029--1044},
	title = {On the classification problem for nuclear {C*}-algebras},
	url = {http://dx.doi.org/10.4007/annals.2008.167.1029},
	volume = {167},
	year = {2008},
	bdsk-url-1 = {http://www.ams.org/mathscinet-getitem?mr=2415391},
	bdsk-url-2 = {http://dx.doi.org/10.4007/annals.2008.167.1029}}

@article{JS-Z,
	author = {Jiang, X. and Su, H.},
	coden = {AJMAAN},
	date-added = {2013-10-05 22:48:34 +0000},
	date-modified = {2013-10-05 22:49:45 +0000},
	fjournal = {American Journal of Mathematics},
	issn = {0002-9327},
	journal = {Amer. J. Math.},
	mrclass = {46L35 (19K35 46L80)},
	mrnumber = {1680321 (2000a:46104)},
	mrreviewer = {Vicumpriya S. Perera},
	number = {2},
	pages = {359--413},
	title = {On a simple unital projectionless {C*}-algebra},
	url = {http://muse.jhu.edu/journals/american_journal_of_mathematics/v121/121.2jiang.pdf},
	volume = {121},
	year = {1999},
	bdsk-url-1 = {http://www.ams.org/mathscinet-getitem?mr=1680321},
	bdsk-url-2 = {http://muse.jhu.edu/journals/american_journal_of_mathematics/v121/121.2jiang.pdf}}

@article{Sato-CP,
	author = {Y. Sato},
	date-added = {2013-09-02 23:12:22 +0000},
	date-modified = {2025-03-29 15:56:15 -0600},
	eprint = {1209.3000},
	journal = {arXiv: 1209.3000},
	title = {Trace spaces of simple nuclear {C*}-algebras with finite-dimensional extreme boundary},
	url = {http://arxiv.org/abs/1209.3000},
	year = {2012},
	bdsk-url-1 = {http://arxiv.org/abs/1209.3000}}

@article{Matui-Sato-CP,
	author = {H. Matui and Y. Sato},
	coden = {ACMAA8},
	date-added = {2013-07-19 03:21:09 +0000},
	date-modified = {2013-09-02 22:55:45 +0000},
	doi = {10.1007/s11511-012-0084-4},
	fjournal = {Acta Mathematica},
	issn = {0001-5962},
	journal = {Acta Math.},
	mrclass = {46L05},
	mrnumber = {2979512},
	mrreviewer = {Caleb Eckhardt},
	number = {1},
	pages = {179--196},
	title = {Strict comparison and {$\mathcal Z$}-absorption of nuclear {C*}-algebras},
	url = {http://dx.doi.org/10.1007/s11511-012-0084-4},
	volume = {209},
	year = {2012},
	bdsk-url-1 = {http://www.ams.org/mathscinet-getitem?mr=2979512},
	bdsk-url-2 = {http://dx.doi.org/10.1007/s11511-012-0084-4}}

@article{Li-interval,
	author = {Li, L.},
	coden = {JRMAA8},
	date-added = {2011-06-20 11:44:20 -0400},
	date-modified = {2014-03-10 18:40:09 +0000},
	doi = {10.1515/crll.1999.019},
	fjournal = {Journal f{\"u}r die Reine und Angewandte Mathematik},
	issn = {0075-4102},
	journal = {J. Reine Angew. Math.},
	mrclass = {46L05},
	mrnumber = {1670266 (99k:46099)},
	pages = {57--79},
	title = {Simple inductive limit {C*}-algebras: spectra and approximations by interval algebras},
	url = {http://dx.doi.org/10.1515/crll.1999.019},
	volume = {507},
	year = {1999},
	bdsk-url-1 = {http://www.ams.org/mathscinet-getitem?mr=1670266},
	bdsk-url-2 = {http://dx.doi.org/10.1515/crll.1999.019}}

@article{Lind-MD,
	author = {E. Lindenstrauss},
	coden = {PMIHA6},
	date-added = {2010-09-16 19:44:23 -0230},
	date-modified = {2010-09-16 19:46:02 -0230},
	fjournal = {Institut des Hautes {\'E}tudes Scientifiques. Publications Math{\'e}matiques},
	issn = {0073-8301},
	journal = {Inst. Hautes {\'E}tudes Sci. Publ. Math.},
	mrclass = {37B40 (37A35 54F45)},
	mrnumber = {1793417 (2001j:37033)},
	mrreviewer = {Mahendra G. Nadkarni},
	number = {89},
	pages = {227--262 (2000)},
	title = {Mean dimension, small entropy factors and an embedding theorem},
	url = {http://www.numdam.org/item?id=PMIHES_1999__89__227_0},
	year = {1999},
	bdsk-url-1 = {http://www.ams.org/mathscinet-getitem?mr=1793417},
	bdsk-url-2 = {http://www.numdam.org/item?id=PMIHES_1999__89__227_0}}

@article{Lindenstrauss-Weiss-MD,
	author = {E. Lindenstrauss and B. Weiss},
	coden = {ISJMAP},
	date-added = {2009-11-17 16:07:29 -0330},
	date-modified = {2009-11-17 16:12:43 -0330},
	doi = {10.1007/BF02810577},
	fjournal = {Israel Journal of Mathematics},
	issn = {0021-2172},
	journal = {Israel J. Math.},
	mrclass = {37B99 (37B05 37B40)},
	mrnumber = {MR1749670 (2000m:37018)},
	mrreviewer = {Kathleen M. Madden},
	pages = {1--24},
	title = {Mean topological dimension},
	url = {http://dx.doi.org/10.1007/BF02810577},
	volume = {115},
	year = {2000},
	bdsk-url-1 = {http://ams.org/mathscinet-getitem?mr=1749670},
	bdsk-url-2 = {http://dx.doi.org/10.1007/BF02810577}}

\end{document}